\newtheorem{theorem}{Theorem}[section]
\newtheorem{lemma}[theorem]{Lemma}
\newtheorem{proposition}[theorem]{Proposition}
\newtheorem{corollary}[theorem]{Corollary}
\theoremstyle{definition}
\newtheorem{example}[theorem]{Example}
\newtheorem{definition}[theorem]{Definition}
\newtheorem{remark}[theorem]{Remark}
\newtheorem{question}[theorem]{Question}
\numberwithin{equation}{theorem}
\def\ge{\geqslant}
\def\le{\leqslant}
\def\phi{\varphi}
\def\rho{\varrho}
\def\tilde{\widetilde}
\def\bar{\overline}
\def\to{\longrightarrow}
\def\mapsto{\longmapsto}
\def\into{\lhook\joinrel\longrightarrow}
\def\onto{\relbar\joinrel\twoheadrightarrow}
\def\ann{\operatorname{ann}}
\def\ker{\operatorname{ker}}
\def\coker{\operatorname{coker}}
\def\image{\operatorname{image}}
\def\ffield{\operatorname{frac}}
\def\tr{\operatorname{tr}}
\def\pf{\operatorname{pf}}
\def\Pf{\operatorname{Pf}}
\def\rad{\operatorname{rad}}
\def\End{\operatorname{End}}
\def\Hom{\operatorname{Hom}}
\def\Spec{\operatorname{Spec}}
\def\GL{\operatorname{GL}}
\def\Sp{\operatorname{Sp}}
\def\O{\operatorname{O}}
\def\SO{\operatorname{SO}}
\def\bsf{\boldsymbol{f}}
\def\bsx{\boldsymbol{x}}
\def\bsy{\boldsymbol{y}}
\def\bsz{\boldsymbol{z}}
\def\fraka{\mathfrak{a}}
\def\frakp{\mathfrak{p}}
\def\frakq{\mathfrak{q}}
\def\calD{\mathcal{D}}
\def\CC{\mathbb{C}}
\def\NN{\mathbb{N}}
\def\QQ{\mathbb{Q}}
\def\ZZ{\mathbb{Z}}
\begin{document}
\title[Differential operators on invariant rings]{Differential operators on classical invariant rings do not lift modulo $p$}

\author{Jack Jeffries}
\address{Department of Mathematics, University of Nebraska, 203 Avery Hall, Lincoln, NE 68588, USA}
\email{jack.jeffries@unl.edu}

\author{Anurag K. Singh}
\address{Department of Mathematics, University of Utah, 155 South 1400 East, Salt Lake City, UT~84112, USA}
\email{singh@math.utah.edu}

\thanks{J.J.~was supported by NSF grants DMS~1606353 and DMS~2044833, and A.K.S.~by NSF grants DMS~1801285 and DMS~2101671.}

\begin{abstract}
Levasseur and Stafford described the rings of differential operators on various classical invariant rings of characteristic zero; in each of the cases that they considered, the differential operators form a simple ring. Towards an attack on the simplicity of rings of differential operators on invariant rings of linearly reductive groups over the complex numbers, Smith and Van den Bergh asked if differential operators on the corresponding rings of positive prime characteristic lift to characteristic zero differential operators. We prove that, in general, this is not the case for determinantal hypersurfaces, as well as for Pfaffian and symmetric determinantal hypersurfaces. We also prove that, with very few exceptions, these hypersurfaces---and, more generally, classical invariant rings---do not admit a mod $p^2$ lift of the Frobenius endomorphism.
\end{abstract}
\maketitle

\section{Introduction}
\label{section:intro}

For a polynomial ring $R$ over a field $K$ of characteristic zero, the ring of $K$-linear differential operators on $R$ is the $K$-algebra generated by $R$ and the $K$-linear derivations on~$R$, i.e., the ring $D_{R|K}=R\langle \frac{\partial}{\partial x_0},\dots,\frac{\partial}{\partial x_d}\rangle$. This noncommutative ring is the well-known Weyl algebra, which enjoys many good ring-theoretic properties: it is left and right Noetherian, and is a simple ring.

For commutative rings $R$ and $A$, where $R$ is an $A$-algebra, there is a notion, due to Grothendieck~\cite[\S16.8]{EGA4}, of the ring of $A$-linear differential operators on $R$, denoted $D_{R|A}$; see \S\ref{section:preliminaries}. However, in contrast with the case of a polynomial ring, if one takes $A$ to be a field~$K$ of characteristic zero, and $R$ to be $K[x,y,z]/(x^3+y^3+z^3)$, then $D_{R|K}$ is not left or right Noetherian, nor a finitely generated $K$-algebra, nor a simple ring; see~\cite{BGG}.

On the other hand, when $R$ is the ring of invariants for a linear action of a reductive group on a polynomial ring over a field $K$ of characteristic zero, it is known in many cases that~$D_{R|K}$ is Noetherian, finitely generated, and a simple ring, just as in the polynomial case \cite{Kantor1,LS,MV,Schwarz:ASENS}. Indeed, it is conjectured that for such invariant rings~$R$, the ring of differential operators~$D_{R|K}$ is a simple ring \cite{Schwarz:ICM}. An analogous statement in positive characteristic was proved by Smith and Van den Bergh \cite[Theorem~1.3]{SV}.

Quite generally, for $A$-algebras $R$ and $B$, there is an injective ring homomorphism
\[
D_{R\,|\,A} \otimes_A B\ \to\ D_{R\otimes_A B\,|\,B},
\]
that is an isomorphism when $B$ is flat over $A$. In particular, one has an isomorphism
\[
D_{R\,|\,\ZZ}\otimes_{\ZZ}\QQ\ \cong\ D_{R\otimes_{\ZZ}\QQ\,|\,\QQ},
\]
and, for $p$ a prime integer, an injective homomorphism
\begin{equation}
\label{equation:base:change}
D_{R\,|\,\ZZ}\otimes_\ZZ(\ZZ/p\ZZ)\ \into\ D_{(R/pR)\,|\,(\ZZ/p\ZZ)}.
\end{equation}

In order to relate rings of differential operators in characteristic zero to their counterparts in positive characteristic $p$, one needs to determine whether the map~\eqref{equation:base:change} is an isomorphism, i.e., whether each differential operator on $R/pR$ lifts to a differential operator on $R$. To study the problem of the simplicity of rings of differential operators on characteristic zero invariant rings via reduction to positive characteristic, Smith and Van den Bergh pose the following question, formulated here in equivalent terms:

\begin{question}[{\cite[Question~5.1.2]{SV}}]
\label{question:SV}
Let $A$ be a domain that is finitely generated as an algebra over $\ZZ$. Suppose $R$ is a finitely generated $A$-algebra such that $R\otimes_A\ffield(A)$ is the ring of invariants for a linear action of a reductive group on a polynomial ring of characteristic zero. Does there exist a nonempty open subset of $U$ of $\Spec A$, such that for each maximal ideal~$\mu\in U$, each differential operator on $D_{(R/\mu R)\,|\,(A/\mu A)}$ lifts to $D_{R|A}$?
\end{question}

We prove that the answer to Question~\ref{question:SV} is negative: for several classical invariant rings that are hypersurfaces, we construct explicit differential operators, modulo prime integers~$p$, that do not lift to characteristic zero differential operators. Our main theorem is below; we refer the reader to \S\ref{subsection:frobenius:trace} for the definition of Frobenius trace.

\begin{theorem}
\label{theorem:main}
Consider the following classical invariant rings:
\begin{enumerate}[\,\rm(a)]
\item\label{main:a} Let $X$ be an $n\times n$ matrix of indeterminates over $\ZZ$, with $n\ge 3$. Set $R\colonequals\ZZ[X]/(\det X)$. Then, for each prime integer $p>0$, the Frobenius trace on $R/pR$ does not lift to a differential operator on $R/p^2 R$, nor, a fortiori, to a differential operator on $R$.

\item\label{main:b} Let $X$ be an $n\times n$ alternating matrix of indeterminates over $\ZZ$, for $n\ge 4$ an even integer. Set~$R\colonequals\ZZ[X]/(\pf X)$, where $\pf X$ denotes the Pfaffian of $X$. Then, for each prime integer $p>0$, the Frobenius trace on $R/pR$ does not lift to a differential operator on~$R/p^2 R$, nor, a fortiori, to a differential operator on $R$.

\item\label{main:c} Let $X$ be a $3\times 3$ symmetric matrix of indeterminates over $\ZZ$. Set~$R\colonequals\ZZ[X]/(\det X)$. Then, for prime integers $p>2$, each differential operator on $R/pR$ lifts to a differential operator on $R$. In the case of characteristic $2$, the Frobenius trace on $R/2R$ does not lift to a differential operator on $R/4R$, nor, a fortiori, to a differential operator on $R$.
\end{enumerate}
\end{theorem}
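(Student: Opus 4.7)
The plan is to reduce each non-liftability statement in Theorem~\ref{theorem:main} to an explicit obstruction-theoretic computation on the ambient polynomial ring. Write $S\colonequals\ZZ[X]$ and $R=S/(f)$, with $f$ equal to $\det X$, $\pf X$, or the $3\times 3$ symmetric determinant. In each case $R/pR$ is $F$-split and Gorenstein, so the Frobenius trace $\phi$ on $R/pR$ is, up to a unit scalar, the unique generator of $\Hom_{R/pR}(F_*(R/pR),R/pR)$, and the standard identification $\Hom_{R/pR}(F_*(R/pR),R/pR)\cong (f^{p-1})/(f^p)$ allows $\phi$ to be described explicitly in terms of the polynomial Frobenius trace $\Phi_S$.

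The first step is to translate the lifting problem into a question on $S/p^2 S$: the operator $\phi$ lifts to $R/p^2 R$ if and only if there is a differential operator $\tilde D$ on $S/p^2 S$ that descends to $R/p^2 R$ (that is, $\tilde D$ normalizes the ideal $(f)\subset S/p^2 S$) and whose mod-$p$ reduction descends on $R/pR$ to $\phi$. Starting from a canonical lift $\tilde D_0$ of a specific mod-$p$ representative of $\phi$, one measures the failure of $\tilde D_0$ to normalize $(f)$ modulo $p^2$. After accounting for all available modifications --- namely, adding $p$ times a differential operator on $S/pS$, adding an element of $(f)\cdot D_{S/p^2 S\,|\,\ZZ/p^2}$, or changing the mod-$p$ representative of $\phi$ by an element of $(f)\cdot D_{S/pS\,|\,\ZZ/p}$ --- this failure lands in an explicit quotient of $R/pR$ whose nontriviality encodes the obstruction.

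The crux of the argument is the computation of this obstruction. I would carry it out via a cofactor- or Pfaffian-style expansion of $f^p$, tracking term by term how $\Phi_S$ behaves modulo $p^2$ and isolating a characteristic residue in $R/pR$ modulo a Jacobian-type ideal generated by divided derivatives of $f$. The bounds $n\ge 3$, $n\ge 4$, and the restriction to $p=2$ in part~(c) should emerge naturally as precisely the conditions under which this residue survives the quotient; by contrast, the excluded smaller and odd-characteristic cases should admit explicit lifts. For part~(c) with $p>2$, I would construct the lift directly using the classical adjugate identity $X\operatorname{adj}(X)=(\det X)\cdot I$, which in characteristic distinct from $2$ carries enough symmetry and invertible denominators to write an integer-coefficient lift of each generator of $D_{R/pR\,|\,\ZZ/p}$. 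The main obstacle throughout is the explicit combinatorial computation of the residue for each classical family, and in particular verifying that it is not absorbed by the ideal arising from the modifications above; it is precisely in this step that the parity and size hypotheses become visible.
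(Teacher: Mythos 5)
Your high-level outline captures the right idea---that the obstruction to lifting the Frobenius trace is a class in some quotient that must be shown nonzero---but the proposal stops at the level of a sketch and, in part~(c), proposes an approach that does not go through.

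Your characterization of lifting as ``find $\tilde D$ on $S/p^2 S$ normalizing $(f)$ and reducing to $\phi$'' is correct in spirit, and your ``failure modulo $p^2$'' obstruction corresponds, in the paper's framework, to the Bockstein class $\beta_p\big(F(\bar\eta_R)\big)$. The paper, however, does not work ad hoc with normalizers of $(f)$: it first establishes a $D_{R|A}$-module isomorphism $H^d_{\Delta_{R|A}}(P_{R|A})\cong D_{R|A}$ (Theorem~\ref{theorem:cyclic}), under which the Frobenius trace corresponds to the image of a canonical class $\eta_R$ under the Frobenius action, and the obstruction to lifting mod~$p^2$ is exactly the Bockstein $\beta_p\big(F(\bar\eta_R)\big)=\big[\phi_p\big(f(\bsy)-f(\bsx)\big)/\prod_i(y_i-x_i)^p\big]$ in $H^{d+1}_{\Delta_R}(P_R/pP_R)$ (Theorem~\ref{theorem:lifting:trace}). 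Lemma~\ref{lemma:mainlemma} then reduces nonvanishing of this class, via specializations, to the much smaller statement that $\big[\phi_p(f)/(x_0\cdots x_m)^p\big]$ is nonzero in a local cohomology module of $R/pR$ itself, which is a concrete ideal-membership assertion. It is precisely this reduction that makes the computation tractable. Your proposal asserts that ``a cofactor- or Pfaffian-style expansion of $f^p$'' will isolate a ``characteristic residue,'' but gives no mechanism for controlling the ideal of possible modifications you list; the paper's control comes from multi-$\ZZ^m$-gradings that constrain coefficients in a hypothetical ideal-membership equation (see the proofs of Theorems~\ref{theorem:quadratic}, \ref{theorem:pfaffian}, \ref{theorem:determinant}), and from careful specializations that preserve the obstruction. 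Your statement that the bounds $n\ge3$, $n\ge4$ ``should emerge naturally'' is hopeful rather than demonstrated; in the paper these bounds enter as the base cases of inductions that rely on very specific specialization patterns.

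The serious gap is in part~(c) for $p>2$. You propose to ``construct the lift directly using the adjugate identity $X\operatorname{adj}(X)=(\det X)\cdot I$ \dots\ to write an integer-coefficient lift of each generator of $D_{R/pR\,|\,\ZZ/p}$.'' This does not work as stated: $D_{R/pR}$ need not be finitely generated in an obvious way, and even granting a generating set, the adjugate identity gives no mechanism for producing differential operators, let alone lifts. The paper's actual proof of surjectivity of $D_R\to D_{R/pR}$ for $p$ odd (Theorem~\ref{theorem:symmetric:3:odd}) is substantially more involved: after inverting $2$, the symmetric determinantal ring is exhibited as an invariant ring of a $\ZZ/2$-action inside a torus-invariant ring, making $R_2$ a direct summand of a polynomial ring; one then reduces, by Theorem~\ref{theorem:bblsz} and a Mayer--Vietoris argument identifying $\rad\Delta$ as $\frakp\cap\frakq$ for two explicit determinantal primes, to showing a certain Segre product is a flat $\ZZ$-algebra in the relevant degree. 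None of that is visible in the adjugate identity. Without this, part~(c), $p>2$, is unproved.

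In short: your framework is broadly the right one conceptually, and for parts~(a) and~(b) could in principle be made rigorous if you supplied the specialization-and-grading computations, but the key technical content (the local cohomology isomorphism, the Bockstein computation of the obstruction, the multi-graded ideal-membership arguments, and the direct-summand/Mayer--Vietoris argument for part~(c)) is absent, and the proposal for part~(c), $p>2$, is incorrect.
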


For $R$ as in \eqref{main:a}, \eqref{main:b}, or \eqref{main:c} above, the ring $R\otimes_\ZZ\QQ$ is the invariant ring for an action of the linearly reductive group $\GL_{n-1}(\QQ)$, $\Sp_{2n-2}(\QQ)$, or $\O_2(\QQ)$, respectively; see, for example,~\S\ref{section:generic:determinant}, \S\ref{section:pfaffian}, \S\ref{section:symmetric}, for details. In contrast with the cases discussed above, we prove that if $R$ is a toric $\ZZ$-algebra, then, for each prime integer~$p>0$, every differential operator on $R/pR$ lifts to a differential operator on $R$; see Theorem~\ref{theorem:toric}. In particular, if $R$ is the hypersurface over $\ZZ$ defined by the determinant of a~$2\times 2$ matrix of indeterminates or a symmetric $2\times 2$ matrix of indeterminates, then every differential operator on $R/pR$ lifts to a differential operator on $R$; this addresses the case~$n=2$ in the context of Theorem~\ref{theorem:main}~\eqref{main:a} and \eqref{main:c} above.

Our approach is based on the paper \cite{Jeffries} by the first author, where it was established that there is an isomorphism between rings of differential operators (considered as modules over the enveloping algebra) and certain local cohomology modules; see also \cite{BZN} where related isomorphisms are established under different hypotheses. The isomorphism between rings of differential operators and local cohomology modules identifies differential operators that do not lift modulo a prime integer $p$ with local cohomology elements that do not lift modulo $p$, and consequently with nonzero elements in a different local cohomology module that are annihilated by the prime integer $p$. These ideas were used by the first author to give positive answers to Question~\ref{question:SV} in special cases~\cite[Theorem~6.3]{Jeffries}. However, we have attempted to keep the present paper largely self-contained, and focused on hypersurfaces, where the isomorphisms can be made entirely explicit; it is striking that the isomorphism is one of $D$-modules:

\begin{theorem}
\label{theorem:cyclic}
Let $A$ be a commutative ring. Set $R\colonequals A[x_0,\dots,x_n]/(f)$, where $f$ is a non\-zerodivisor in the polynomial ring $A[x_0,\dots,x_n]$. Set $\Delta$ to be the kernel of the multiplication map $R\otimes_A R\to R$. Then the local cohomology module $H^n_\Delta(R\otimes_A R)$, with the natural~$D_{R|A}$-module structure as in~\S\ref{subsection:equivalence}, is a free $D_{R|A}$-module of rank one.
\end{theorem}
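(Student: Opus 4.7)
The plan is to convert the computation into a question about top local cohomology over a polynomial extension of $R$, via a change of coordinates, and then identify the resulting kernel with $D_{R|A}$ as a $D$-module.

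First, starting from the presentation $R\otimes_A R = A[x_0,\dots,x_n,y_0,\dots,y_n]/(f(x),f(y))$, set $z_i \colonequals x_i\otimes 1 - 1\otimes x_i$, i.e., make the substitution $y_i = x_i - z_i$. This yields an identification
\[
R\otimes_A R \ \cong\ R[z_0,\dots,z_n]/(h), \qquad h \colonequals f(x-z),
\]
as an $R$-algebra with respect to the left $R$-action, under which $\Delta$ corresponds to the image of $(z_0,\dots,z_n)$. Taylor's formula gives $h = f(x) - \sum_i \partial_i f(x)\,z_i + (\text{terms of higher order in } z)$, and since $f(x)=0$ in $R$, the element $h$ lies in $(z_0,\dots,z_n)R[z]$. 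A content-ideal/McCoy-style argument, combined with the hypothesis that $f$ is a nonzerodivisor in $A[x_0,\dots,x_n]$, shows that $h$ is a nonzerodivisor on $R[z]$.

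Applying the local cohomology functors $H^i_{(z)}(-)$ to the short exact sequence
\[
0 \to R[z] \xrightarrow{\ h\ } R[z] \to R\otimes_A R \to 0
\]
and using that $z_0,\dots,z_n$ is a regular sequence in $R[z]$, so only $H^{n+1}_{(z)}(R[z])$ is nonzero, the long exact sequence collapses to
\[
H^n_\Delta(R\otimes_A R) \ \cong\ \ker\bigl(h\cdot\colon H^{n+1}_{(z)}(R[z]) \to H^{n+1}_{(z)}(R[z])\bigr).
\]
Moreover, the \v Cech complex gives the explicit description $H^{n+1}_{(z)}(R[z]) = \bigoplus_{\alpha\in\NN^{n+1}} R\cdot\eta_\alpha$ with $\eta_\alpha \colonequals 1/(z_0^{\alpha_0+1}\cdots z_n^{\alpha_n+1})$. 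The element $\eta_0 = 1/(z_0\cdots z_n)$ lies in $\ker(h\cdot)$, because each $z_i$ annihilates $\eta_0$ and $h\in(z_0,\dots,z_n)R[z]$; endowing $H^n_\Delta(R\otimes_A R)$ with its $D_{R|A}$-module structure from \S\ref{subsection:equivalence} (valid because the module is $\Delta$-torsion), the candidate isomorphism is
\[
\Phi\colon D_{R|A}\to H^n_\Delta(R\otimes_A R),\qquad \delta\mapsto \delta\cdot[\eta_0].
\]

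The main obstacle is the bijectivity of $\Phi$. My plan is to compare with the classical polynomial case ($f=0$), where the analogous map $D_{S|A}\to H^{n+1}_{(z)}(S[z])$ sending the divided-power operator $\partial^{[\alpha]}$ to $\eta_\alpha$ is a well-known isomorphism of $D_{S|A}$-modules. For the hypersurface $R = S/(f)$, one then shows that passing to $\ker(h\cdot)$ on the local cohomology side mirrors the passage from $D_{S|A}$-operators to those that descend to $D_{R|A}$ on the differential operator side. The technical heart is verifying that the $D_{R|A}$-action coming from the abstract $\Delta$-torsion equivalence of \S\ref{subsection:equivalence} matches, on the distinguished generator $\eta_0$, the action read off from the explicit \v Cech description of $H^{n+1}_{(z)}(R[z])$; once this compatibility is settled, the freeness and the rank-one property follow at once.
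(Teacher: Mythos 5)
Your overall architecture mirrors the paper's proof exactly: the change of coordinates $z_i\colonequals x_i\otimes1-1\otimes x_i$ turns $R\otimes_A R$ into $R[z_0,\dots,z_n]/(h)$ (the paper's $P_{R|A}=P_{S|A}\otimes_S R/(f(\bsy))$ with generators $y_i-x_i$ for $\Delta$), the long exact sequence from $0\to R[z]\xrightarrow{\,h\,}R[z]\to R\otimes_A R\to 0$ is the paper's connecting map $\delta_{f(\bsy)}$, and the distinguished class $[\eta_0]$ is precisely the paper's $\eta_R$ from Example~\ref{example:identity:R}. At the level of $P_{R|A}$-modules this reproduces Proposition~\ref{proposition:isomorphism:R}, and the proposed generator is correct.

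The gap is precisely where you flag it, but you understate how much work remains: the ``technical heart'' you defer---verifying that the abstractly defined $D_{R|A}$-action on $H^n_\Delta(R\otimes_A R)$ agrees with the one under which $\Phi$ is a homomorphism---is the entire content of Theorem~\ref{theorem:cyclic}, since the $P_{R|A}$-linear bijection was already established in \S\ref{subsection:hypersurfaces}. Your plan ``compare with the polynomial case, then show that $\ker(h\cdot)$ mirrors the passage from $D_{S|A}(R)$ to $D_{R|A}$'' is the right idea, but a direct comparison does not by itself produce $D_{R|A}$-linearity of the vertical maps: the polynomial-case $D$-isomorphism is a priori only $D_{S|A}$-linear, and you need a mechanism to convert it to a $D_{R|A}$-linear statement after applying $-\otimes_S R$. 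The paper supplies this via the $(D_{R|A},D_{S|A})$-bimodule $D_{S|A}(R)$ and Lemma~\ref{lemma:bimodule}, which shows that the base-change functor $D_{S|A}(R)\otimes_{D_{S|A}}(-)$ carries $H^{t+1}_{(\bsz)}(S)$ to $H^{t+1}_{(\bsz)}(R)$ as $D_{R|A}$-modules; this, together with the explicit divided-power computation establishing $D_{S|A}$-linearity in Theorem~\ref{theorem:d:isomorphism:polynomial}, is what makes the commutative diagram in the proof of Theorem~\ref{theorem:cyclic:proved} a diagram of $D_{R|A}$-modules. Without some such bimodule (or an equivalent explicit \v{C}ech-side verification that the two actions agree on $\eta_\alpha$ for all $\alpha$, not merely on $\eta_0$), the bijectivity and $D$-linearity of $\Phi$ remain unproved. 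As a smaller point, the claim that a content/McCoy argument gives nonzerodivisibility of $h=f(\bsy)$ on $R[z]=R[\bsy]$ deserves care: $f(\bsy)$ being a nonzerodivisor on $A[\bsx,\bsy]$ does not automatically imply it is one modulo $(f(\bsx))$, and one should either assume flatness of $R$ over $A$ or supply an explicit argument tailored to the stated hypotheses.
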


The techniques and calculations used in our proof of Theorem~\ref{theorem:main} have implications to the existence of liftings of the Frobenius morphism that have attracted a lot of attention; for example, \cite{Bhatt:doc} provides a connection between liftability of the Frobenius and infinitely generated crystalline cohomology, while \cite{BTLM} proves Bott vanishing for varieties that admit a Frobenius lift modulo $p^2$. The liftability of the Frobenius is also studied in considerable detail in \cite{Zdanowicz}; we use results from that paper to determine whether the Frobenius endomorphism on a classical invariant ring of positive prime characteristic lifts to a ring endomorphism in characteristic zero:

\begin{theorem}
\label{theorem:frob}
Consider the following classical invariant rings, modeled over $\ZZ$:
\begin{enumerate}[\,\rm(a)]
\item\label{frobenius:a} $R\colonequals\ZZ[X]/I_t(X)$, where $X$ is an $m\times n$ matrix of indeterminates, $I_t(X)$ the ideal generated by the size $t$ minors of $X$, and $\min\{m,n\}\ge t\ge 3$.

\item\label{frobenius:b} $R\colonequals\ZZ[X]/\Pf_t(X)$, where $X$ is an $n\times n$ alternating matrix of indeterminates, $\Pf_t(X)$ the ideal generated by the Pfaffians of the size~$t$ principal submatrices of $X$, for $t$ even, and $n\ge t\ge 4$.

\item\label{frobenius:c} $R\colonequals\ZZ[X]/I_t(X)$, where $X$ is a symmetric $n\times n$ matrix of indeterminates, $I_t(X)$ the ideal generated by the size $t$ minors of $X$, and $n\ge t\ge 3$.
\end{enumerate}
Let $p$ be a positive prime integer. In cases~\eqref{frobenius:a}~and~\eqref{frobenius:b}, the Frobenius endomorphism on~$R/pR$ does not lift to a ring endomorphism of $R/p^2R$, nor, a fortiori, to a ring endomorphism of~$R$. In case~\eqref{frobenius:c}, the same conclusion holds if $t\ge 4$ or if $p=2$.
\end{theorem}

If $R$ is a normal affine semigroup ring over $\ZZ$, then, for each prime integer~$p>0$, the Frobenius endomorphism on $R/pR$ lifts to an endomorphism of $R$, and hence to an endomorphism of $R/p^2R$. Specifically, if $R$ is defined by the size~$2$ minors of a matrix of indeterminates, or of a symmetric matrix of indeterminates, then the Frobenius endomorphism on~$R/pR$ lifts to an endomorphism of $R$ and of $R/p^2R$; this explains the case~$t=2$ in the context of Theorem~\ref{theorem:frob}~\eqref{frobenius:a} and \eqref{frobenius:c}.

Recall that if $G$ is a linearly reductive group over a field $K$, with a linear action on a polynomial ring $K[\bsx]$, then the invariant ring $K[\bsx]^G$ is a direct summand of $K[\bsx]$ as a~$K[\bsx]^G$-module; many key properties of classical invariant rings including finite generation and the Cohen-Macaulay property follow from the existence of such a splitting, see for example~\cite[\S2]{Hochster-Eagon} and \cite{Hochster-Roberts}. Indeed, the general linear group, the symplectic group, and the orthogonal group are linearly reductive over fields of characteristic zero; it follows that determinantal rings, Pfaffian determinantal rings, and symmetric determinantal rings, over fields of characteristic zero, are direct summands of polynomial rings. In contrast, we prove that working over the ring of integers $\ZZ$, or over the ring of $p$-adic integers $\widehat{\ZZ_{(p)}}$, the corresponding rings are typically not direct summands of \emph{any} polynomial ring:

\begin{corollary}
\label{corollary:not:summand}
Let $R$ be as in Theorem~\ref{theorem:frob}~\eqref{frobenius:a},~\eqref{frobenius:b}, or~\eqref{frobenius:c}. Then $R$ is not a direct summand, as an~$R$-module, of any polynomial ring over $\ZZ$.

Let $V\colonequals\widehat{\ZZ_{(p)}}$ be the $p$-adic integers; in case~\eqref{frobenius:c}, assume further that either~$t\ge 4$, or that $p=2$. Then the ring $R\otimes_\ZZ V$ is not a direct summand, as an $R\otimes_\ZZ V$-module, of any polynomial ring over $V$.
\end{corollary}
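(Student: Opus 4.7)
The plan is to argue by contradiction: if $R$ is a direct summand of a polynomial ring over the base ring, then the canonical Frobenius lift on that polynomial ring, combined with the $R$-linear retraction, produces a ring endomorphism of $R/p^2R$ lifting the Frobenius of $R/pR$. This contradicts Theorem~\ref{theorem:frob}.

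Assume $R$ (respectively, $R\otimes_{\ZZ} V$) is an $R$-module summand of $S:=V[\bsy]$ for some polynomial ring in a set of indeterminates $\bsy$ over $V$; fix an $R$-linear retraction $\pi\colon S\to R$ of the inclusion $\iota\colon R\to S$ and set $M:=\ker\pi$, so that $S=\iota(R)\oplus M$ as $R$-modules. The key structural fact to exploit is that $M$ is an $R$-submodule of $S$, whence $\iota(R)\cdot M\subseteq M$ as a subset of the ring $S$. Let $\phi\colon S\to S$ be the canonical Frobenius lift defined by $\phi(y_i)=y_i^p$. For $r\in R$, the element $\phi(\iota(r))-\iota(r^p)$ lies in $pS$; dividing by $p$ produces a class $c(r)\in S/pS$, which decomposes uniquely as $c(r)=\iota(a(r))+m(r)$ with $a(r)\in R/pR$ and $m(r)\in M/pM$. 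Define $F\colon R/p^2R\to R/p^2R$ by $F(r):=r^p+p\cdot a(r)$; this is well-defined modulo $p^2$ because the ambiguity in lifting $a(r)\in R/pR$ is killed by multiplication by~$p$.

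The main step is verifying that $F$ is a ring endomorphism lifting the Frobenius of $R/pR$. Reduction modulo $p$ and unitality ($F(1)=1$, because $\phi(1)=1$ forces $a(1)=0$) are immediate. Additivity reduces to the identity $a(r_1+r_2)=a(r_1)+a(r_2)-\epsilon(r_1,r_2)$ in $R/pR$, where $\epsilon(r_1,r_2):=((r_1+r_2)^p-r_1^p-r_2^p)/p\in R$; this identity falls out of expanding $\phi(\iota(r_1)+\iota(r_2))=\phi(\iota(r_1))+\phi(\iota(r_2))$ modulo $p^2$ and comparing $\iota(R)$-components in the decomposition $S/pS=\iota(R/pR)\oplus M/pM$. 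Multiplicativity, the substantive check, reduces similarly to the identity $a(r_1r_2)=r_1^p\,a(r_2)+r_2^p\,a(r_1)$ in $R/pR$: one expands $\phi(\iota(r_1)\iota(r_2))=\phi(\iota(r_1))\phi(\iota(r_2))$ modulo $p^2$, invokes $\iota(r^p)\cdot m\in M$ (the $R$-stability of $M$), and reads off the $\iota(R)$-component.

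The contradiction with Theorem~\ref{theorem:frob} proves both parts: for the $V=\widehat{\ZZ_{(p)}}$ statement, the hypotheses match those of Theorem~\ref{theorem:frob} exactly; for the statement over $\ZZ$, a summand of $\ZZ[\bsy]$ tensors up to a summand over $\widehat{\ZZ_{(p)}}$ for any prime $p$, so any choice of $p$ covered by Theorem~\ref{theorem:frob} delivers the contradiction. In case~\eqref{frobenius:c} over $\ZZ$, even though Theorem~\ref{theorem:frob} excludes $n=3$ with $p>2$, the prime $p=2$ is always available since $n\ge 3$. The main conceptual obstacle is multiplicativity of $F$: the naive composite $\pi\circ\phi\circ\iota$ is only additive, and the resolution is the $R$-stability of $\ker\pi$, which confines all Frobenius ``error terms'' to the $M$-summand so that they cannot pollute the $\iota(R)$-summand.
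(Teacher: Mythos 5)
Your argument is correct, and it takes a genuinely different route from the paper. The paper disposes of the corollary in one sentence by citing Zdanowicz~\cite[Lemma~4.1]{Zdanowicz}, where it is observed that a direct summand of a ring admitting a Frobenius lift modulo $p^2$ again admits such a lift; combined with Theorem~\ref{theorem:frob}, the conclusion is immediate. What you have done, in effect, is \emph{reprove} Zdanowicz's lemma in the special case needed here, rather than invoke it. Your argument is sound: the crucial observation is that $M=\ker\pi$ is an $R$-submodule of $S$, so $\iota(R)\cdot M\subseteq M$, which confines the Frobenius error terms to the complement of $\iota(R)$; the identity $a(r_1r_2)=r_1^p\,a(r_2)+r_2^p\,a(r_1)$ in $R/pR$ then drops out of the $p$-derivation Leibniz rule for $\phi_p$ on $S$ reduced modulo $p$ and the uniqueness of the decomposition $S/pS=\iota(R/pR)\oplus M/pM$. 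One small streamlining worth noting: your $F$ is just $\pi\circ\phi\circ\iota$ reduced modulo $p^2$ (since $\pi(\phi(\iota(r)))=r^p+p\,\pi(\phi_p(\iota(r)))$), and multiplicativity modulo $p^2$ can be verified directly by expanding $\phi(\iota r_1)\phi(\iota r_2)$ and using $R$-linearity of $\pi$ together with $R$-stability of $M$ — avoiding the detour through $a$, $m$, and the mod-$p$ splitting. Your handling of the two parts of the corollary (using any prime, in particular $p=2$, over $\ZZ$; using the fixed $p$ with the stated hypotheses over $\widehat{\ZZ_{(p)}}$, noting $(R\otimes_\ZZ V)/p^2\cong R/p^2R$) is also correct. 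The tradeoff: the paper's citation is shorter and delegates the generality to Zdanowicz; your argument is self-contained and makes transparent exactly which structural features of the splitting are used.
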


The corollary is immediate from Theorem~\ref{theorem:frob} since the existence of a Frobenius lift modulo $p^2$ is inherited by a ring that is a direct summand, see Proposition~\ref{proposition:lift:summand}. Regarding the exceptional case~\eqref{frobenius:c} in Theorem~\ref{theorem:frob} and the corollary, if~$V$ is a discrete valuation ring of mixed characteristic, such that the residual characteristic is an odd prime integer, then a symmetric determinantal ring of the form $V[X]/I_3(X)$ is a direct summand of a polynomial ring over $V$, see Remark~\ref{remark:dirsum} and \S\ref{section:minors}.

In \S\ref{section:preliminaries} we record basic facts about differential operators and Koszul and local cohomology modules from perspectives needed later in the paper; \S\ref{section:isomorphism} includes the aforementioned explicit isomorphisms, relating specific $p$-torsion local cohomology elements to specific differential operators; Theorem~\ref{theorem:cyclic} is proved in \S\ref{subsection:equivalence}. The proofs of the three cases of Theorems~\ref{theorem:main} and~\ref{theorem:frob} are completed in \S\ref{section:pfaffian}, \S\ref{section:generic:determinant},~and \S\ref{section:symmetric}, while Theorem~\ref{theorem:frob} is proved in \S\ref{section:minors}. For Theorem~\ref{theorem:main}, we prove that the relevant $p$-torsion local cohomology elements are nonzero. We expect that these calculations are of independent interest, adding to the study of integer torsion in local cohomology modules pursued in \cite{Singh:MRL,LSW,BBLSZ}.

\section{Preliminaries}
\label{section:preliminaries}

\subsection{Differential operators}
\label{subsection:differential operators}

\emph{Differential operators} on a commutative ring $R$ are defined inductively as follows: for each $r\in R$, the multiplication by $r$ map $\tilde{r}\colon R\to R$ is a differential operator of order $0$; for each positive integer $n$, the differential operators of order less than or equal to $n$ are additive maps $\delta\colon R\to R$ for which each commutator
\[
[\tilde{r},\delta]\colonequals\tilde{r}\circ\delta-\delta\circ\tilde{r}
\]
is a differential operator of order less than or equal to $n-1$. If $\delta$ and $\delta'$ are differential operators of order at most $m$ and $n$ respectively, then $\delta\circ\delta'$ is a differential operator of order at most $m+n$. Thus, the differential operators on~$R$ form a subring $D_R$ of $\End_\ZZ(R)$. We use~$D^n_R$ to denote the differential operators of order at most $n$.

A differential operator~$\delta\in D_R$ is a \emph{derivation} if
\[
\delta(r_1r_2)=r_1\delta(r_2)+r_2\delta(r_1)\qquad\text{for }\ r_i\in R.
\]
It is readily seen that each element of $D^1_R$ may be expressed uniquely as the sum of an element of $D^0_R$ and a derivation.

When $R$ is an algebra over a commutative ring $A$, we set $D_{R|A}$ to be the subring of~$D_R$ that consists of differential operators that are $A$-linear; note that $D_{R|\ZZ}$ equals $D_R$. When~$R$ is an algebra over a perfect field $K$ of positive prime characteristic, then $D_{R|K}$ equals $D_R$, see for example \cite[Example~5.1~(c)]{Lyubeznik:Crelle}.

If $R=A[x_0,\dots,x_d]$ is a polynomial ring over $A$, then any element of $D^n_{R|A}$ can be expressed as an $R$-linear combination of the differential operators $\partial_{a_0,\dots,a_d}$, where $a_i$ are nonnegative integers with $a_0+\cdots+a_d\le n$, and
\[
\partial_{a_0,\dots,a_d} (x_0^{b_0} \cdots x_d^{b_d})\ =\ \prod_{i=0}^d \binom{b_i}{a_i} x_0^{b_0-a_0} \cdots x_d^{b_d-a_d}.
\]
Note that if $A$ contains the field of rational numbers, then
\[
\partial_{a_0,\dots,a_d}\ =\ \frac{1}{a_0!}\frac{\partial^{a_0}}{\partial x_0^{a_0}} \cdots \frac{1}{a_d!}\frac{\partial^{a_d}}{\partial x_d^{a_d}}.
\]

We record an alternative description of $D_{R|A}$ from~\cite[\S 16.8]{EGA4}. For $R$ an $A$-algebra, set
\[
P_{R\,|\,A}\colonequals R\otimes_A R,
\]
and consider the $P_{R|A}$-module structure on $\End_A(R)$ under which $r_1\otimes r_2$ acts on $\delta$ to give the endomorphism $\tilde{r}_1\circ\delta\circ\tilde{r}_2$, where $\tilde{r}_i$ denotes the map that is multiplication by $r_i$. Set
\[
\CD
\Delta_{R\,|\,A}\colonequals\ker(P_{R\,|\,A}@>\mu>> R),
\endCD
\]
where $\mu$ is the $A$-algebra homomorphism determined by $\mu(r_1\otimes r_2)=r_1r_2$. The ideal $\Delta_{R|A}$ is generated by elements of the form $r\otimes\!1-1\!\otimes r$. Since
\[
(r\otimes\!1-1\!\otimes r)(\delta)\ =\ [\tilde{r},\delta],
\]
it follows that an element $\delta$ of $\End_A(R)$ is a differential operator of order at most $n$ precisely if it is annihilated by $\Delta^{n+1}_{R|A}$. By~\cite[Proposition~16.8.8]{EGA4}, the $A$-linear differential operators on $R$ of order at most~$n$ correspond to
\begin{equation}
\label{equation:diff:iso}
\Hom_R(P^n_{R\,|\,A},\ R)\ \cong\ \ann(\Delta^{n+1}_{R\,|\,A},\ \End_A(R)),
\end{equation}
where
\[
P^n_{R\,|\,A}\colonequals P_{R\,|\,A}/\Delta^{n+1}_{R\,|\,A}
\]
is viewed as a left $R$-module via $r\mapsto r\otimes\!1$. To make the isomorphism~\eqref{equation:diff:iso} explicit, consider the map
\[
\rho\colon R\to P^n_{R\,|\,A}\qquad\text{with }\ r\mapsto 1\!\otimes r,
\]
in which case, the map
\[
\rho^*\colon\Hom_R(P^n_{R\,|\,A},\ R)\to D^n_{R\,|\,A}\qquad\text{with }\ \delta\mapsto\delta\circ\rho
\]
is an isomorphism of $P_{R|A}$-modules.

In addition to the filtration by order, we will have use for another filtration on $D_{R|A}$. Supposing that $\Delta_{R|A}$ is a finitely generated ideal of $P_{R|A}$, fix a set of generators $z_0,\dots,z_t$. Then the sequence of ideals defined by
\[
\Delta^{[n]}_{R\,|\,A}\colonequals(z_0^n,\dots,z_t^n),\qquad\text{for }\ n\in\NN,
\]
is cofinal with the sequence $\Delta_{R|A}^n$ for $n\in\NN$. As there is little risk of confusion, we reuse the notation $\rho\colon R\to P^{[n]}_{R|A}$ for the map with $r\mapsto1\!\otimes r$, and set
\[
D^{[n]}_{R\,|\,A}\colonequals\{\delta\circ\rho\ \mid\ \delta\in\Hom_R(P^{[n]}_{R\,|\,A},\ R)\}, \qquad\text{where }\ 
P^{[n]}_{R\,|\,A}\colonequals P_{R\,|\,A}/\Delta^{[n+1]}_{R\,|\,A}.
\]
Similarly, we use $\rho^*$ for the $P_{R|A}$-module isomorphism
\[
\rho^*\colon\Hom_R(P^{[n]}_{R\,|\,A},\ R)\to D^{[n]}_{R\,|\,A}\qquad\text{with }\ \delta\mapsto\delta\circ\rho.
\]
Note that $D^{[n]}_{R|A}$ gives a filtration of $D_{R|A}$ that is cofinal with the filtration by order; the filtration $D^{[n]}_{R|A}$ depends on the choice of ideal generators for $\Delta_{R|A}$.

Let $M$ be an $R$-module. Then $\Hom_A(R,M)$ has a $P_{R|A}$-module structure given by
\[
(r_1\otimes r_2)\cdot\delta\colonequals\tilde{r}_1\circ\delta\circ\tilde{r}_2.
\]
The differential operators from $R$ to $M$, of order at most $n$, are 
\[
D^n_{R\,|\,A}(M)\colonequals\ann\big(\Delta^{n+1}_{R\,|\,A},\ \Hom_A(R,M)\big).
\]
Note that one has an isomorphism of $P_{R|A}$-modules,
\[
D^n_{R\,|\,A}(M)\ \cong\ \Hom_R(P^n_{R\,|\,A},\ M).
\]
Likewise, given generators $z_0,\dots,z_t$ for $\Delta_{R|A}$, we set
\[
D^{[n]}_{R\,|\,A}(M)\colonequals\ann\big(\Delta^{[n+1]}_{R\,|\,A},\ \Hom_A(R,M)\big)\ \cong\ \Hom_R(P^{[n]}_{R\,|\,A},\ M)
\]
and
\[
D_{R\,|\,A}(M)\colonequals\bigcup_{n\ge0} D^n_{R\,|\,A}(M).
\]

In analogy with the equality $D_R = D_{R|\ZZ}$, we set $P_R\colonequals P_{R|\ZZ}$ and $\Delta_R\colonequals \Delta_{R|\ZZ}$, along with the corresponding notation $P^n_R\colonequals P_R/\Delta^{n+1}_R$ and $P^{[n]}_R\colonequals P_R/\Delta^{[n+1]}_R$. Observe that if $R$ has characteristic $p$, we have $P_R=P_{R|(\ZZ/p\ZZ)}$, and likewise for the other notions discussed above.

\subsection{Koszul and local cohomology}
\label{subsection:koszul:local}

Let $z$ be an element of a ring $R$. One has maps between the Koszul complexes $K^\bullet(z^n;\,R)$, for $n\in\NN$, and the \v Cech complex $C^\bullet(z;\,R)$ as below:
\[
\CD
0 @>>> R @>z^{n-1}>> R @>>> 0\phantom{.}\\
@. @V1VV @VVzV\\ 
0 @>>> R @>z^n>> R @>>> 0\phantom{.}\\
@. @V1VV @VV1/z^nV\\ 
0 @>>> R @>>> R_z @>>> 0.
\endCD
\]
Taking the direct limit of $K^\bullet(z^n;\,R)$ yields an isomorphism $\varinjlim_n K^\bullet(z^n;\,R)\cong C^\bullet(z;\,R)$.

For $\bsz\colonequals z_0,\dots,z_t$, one similarly obtains a map of complexes
\[
\CD
K^\bullet(\bsz;\ R)\colonequals\bigotimes_i K^\bullet(z_i;\ R) @>>> \bigotimes_i C^\bullet(z_i;\ R)\equalscolon C^\bullet(\bsz;\ R),
\endCD
\]
and, for each $k\ge 0$, an induced map from Koszul cohomology to local cohomology
\begin{equation}
\label{equation:koszul:local}
\CD
H^k(\bsz;\ R) @>>> H^k_{(\bsz)}(R).
\endCD
\end{equation}
Setting $\bsz^n\colonequals z_0^n,\dots,z_t^n$, one likewise has $\varinjlim_n K^\bullet(\bsz^n;\,R)\cong C^\bullet(\bsz;\,R)$, and 
\[
\varinjlim_n H^k(\bsz^n;\ R)\ \cong\ H^k_{(\bsz)}(R)\qquad\text{for each }\ k\ge0 .
\]
When $k$ equals $t+1$, the map~\eqref{equation:koszul:local} takes the form
\[
H^{t+1}(\bsz;\ R)=\frac{R}{(\bsz)R}\ \to\ \frac{R_{z_0\cdots z_t}}{\sum_i R_{z_0\cdots \hat{z}_i\cdots z_t}}=H^{t+1}_{(\bsz)}(R),
\qquad\text{with }\ \bar{1}\ \mapsto\ \left[\frac{1}{z_0\cdots z_t}\right],
\]
while, if $k$ equals $t$, the Koszul cohomology element in $H^t(\bsz;\,R)$ corresponding to an equation $\sum_iz_ig_i=0$ in $R$ maps to
\[
\left[\cdots,\ \frac{(-1)^ig_i}{z_0\cdots \hat{z}_i\cdots z_t},\ \cdots\right]\ \in\ H^t_{(\bsz)}(R).
\]

We mention that a local cohomology element
\[
\left[\frac{r}{z_0^n\cdots z_t^n}\right]\ \in\ H^{t+1}_{(\bsz)}(R)
\]
is zero if and only if there exists an integer $k\ge0$ such that
\[
r(z_0\cdots z_t)^k\ \in\ \big(z_0^{n+k},\ \dots,\ z_t^{n+k} \big)R.
\]

Suppose the ring $R$ takes the form $S/fS$, for $S$ a commutative ring, and $f\in S$ a regular element. Let $\bsz\colonequals z_0,\dots,z_t$, as before. The exact sequence
\[
\CD
0 @>>> S @>f>> S @>>> R @>>> 0
\endCD
\]
induces the cohomology exact sequence
\[
\CD
@>>> H^t_{(\bsz)}(R) @>\delta_f>> H^{t+1}_{(\bsz)}(S) @>f>> H^{t+1}_{(\bsz)}(S) @>>>,
\endCD
\]
with $\delta_f$ denoting the connecting homomorphism. To make the map explicit, suppose $g_i$ are elements of $S$ such that
\[
\sum_iz_i^ng_i= sf
\]
for some $n\ge 1$ and an element $s\in S$; then
\[
\delta_f\colon\left[\cdots,\ \frac{(-1)^ig_i}{(z_0\cdots \hat{z}_i\cdots z_t)^n},\ \cdots\right]\ \mapsto\ \left[\frac{s}{(z_0\cdots z_t)^n}\right].
\]

\subsection{Bockstein homomorphisms}
\label{subsection:bockstein}

We briefly review Bockstein homomorphisms on local cohomology \cite{SinghWalther:Bock}. Let $p$ be a prime integer that is a regular element on a ring $R$. Fix an ideal $\fraka$ of $R$. Applying the local cohomology functor $H^\bullet_\fraka(-)$ to
\[
\CD
0 @>>> R/pR @>p>> R/p^2R @>>> R/pR @>>> 0,
\endCD
\]
one obtains a cohomology exact sequence; the \emph{Bockstein homomorphism}
\[
\beta_p\colon H^k_\fraka(R/pR)\to H^{k+1}_\fraka(R/pR)
\]
is the connecting homomorphism in the cohomology exact sequence. For an alternative point of view, one may take the cohomology exact sequence induced by
\[
\CD
0 @>>> R @>p>> R @>>> R/pR @>>> 0,
\endCD
\]
i.e., the sequence
\[
\CD
@>>> H^k_\fraka(R/pR) @>{\delta_p}>> H^{k+1}_\fraka(R) @>p>> H^{k+1}_\fraka(R) @>{\pi_p}>> H^{k+1}_\fraka(R/pR) @>>>.
\endCD
\]
The Bockstein homomorphism $\beta_p$ above then coincides with the composition
\[
\CD
H^k_\fraka(R/pR) @>{\ \ \pi_p\, \circ \, \delta_p\ \ }>> H^{k+1}_\fraka(R/pR).
\endCD
\]

\subsection{The \texorpdfstring{$D$}{D}-module structure on local cohomology}
\label{subsection:d:mod:local:cohomology}

Let $R$ be an $A$-algebra, and $z$ an element of $R$. Then the $D_{R|A}$-module structure on $R$ extends uniquely to the localization $R_z$ as follows: one defines the action by induction on the order of $\delta\in D_{R|A}$ and on the power of the denominator by the rule
\[
\delta(r/z^n)\colonequals \frac{\delta(r/z^{n-1}) - [\delta,z](r/z^n)}{z}.
\]
This may be written in closed form: set $\delta^{(0)}\colonequals \delta$ and $\delta^{(i+1)}\colonequals [\delta^{(i)},\tilde{z^n}]$ inductively;~then
\begin{equation}
\label{equation:localization2}
\delta(r/z^n)= \sum_{k=0}^{\mathrm{ord}(\delta)} (-1)^k \ \frac{\delta^{(k)}(r)}{z^{n(k+1)}}.
\end{equation}
For elements $\bsz$ of $R$, the \v Cech complex $C^\bullet(\bsz;\,R)$ is a complex of $D_{R|A}$-modules, hence its cohomology modules $H^k_{(\bsz)}(R)$ have a natural $D_{R|A}$-module structure. This $D_{R|A}$-module structure on local cohomology is compatible with base change in the following sense: If~$S\to R$ is a homomorphism of $A$-algebras, we observe that $D_{S|A}(R)$ has the structure of a $(D_{R|A},D_{S|A})$-bimodule, where $D_{R|A}$ acts by postcomposition, and $D_{S|A}$ acts by precomposition; one verifies using the inductive definition of differential operators that the prescribed compositions are indeed elements of $D_{S|A}(R)$. This bimodule structure yields a base change functor from $D_{S|A}$-modules to $D_{R|A}$-modules, $M \mapsto D_{S|A}(R)\otimes_{D_{S|A}} M$.

\begin{lemma}
\label{lemma:bimodule}
Let $S=A[\bsx]$ be a polynomial ring over $A$, and $R$ a homomorphic image of $S$.
\begin{enumerate}[\,\rm(a)]
\item\label{lemma:bimodule:a} For each $z\in S$, one has an isomorphism of $D_{R|A}$-modules $D_{S|A}(R)\otimes_{D_{S|A}} S_z \cong R_z$.
\item\label{lemma:bimodule:b} Given elements $\bsz\colonequals z_0,\dots,z_t$ of $S$, one has an isomorphism of $D_{R|A}$-modules
\[
D_{S\,|\,A}(R)\otimes_{D_{S\,|\,A}} H^{t+1}_{(\bsz)}(S) \cong H^{t+1}_{(\bsz)}(R).
\]
\end{enumerate}
\end{lemma}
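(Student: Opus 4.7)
The plan is to identify the base change functor $D_{S\,|\,A}(R) \otimes_{D_{S\,|\,A}} -$ with ordinary tensor along the quotient map $S \to R$. The crucial structural input, which I would establish first, is that when $S = A[\bsx]$ is a polynomial ring, there is a natural $(R, D_{S\,|\,A})$-bimodule isomorphism
\[
D_{S\,|\,A}(R)\ \cong\ R \otimes_S D_{S\,|\,A}.
\]
This follows from the explicit description in~\S\ref{subsection:differential operators}: each $A$-linear differential operator $S \to R$ of order at most $n$ admits a unique expression as an $R$-linear combination of the operators $\partial_{a_0,\dots,a_d}$ (composed with the quotient $S \to R$), with the coefficients determined by evaluation on monomials. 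The left $R$-action by postcomposition with multiplication and the right $D_{S\,|\,A}$-action by precomposition match those on $R \otimes_S D_{S\,|\,A}$. Tensoring on the right, one obtains for every $D_{S\,|\,A}$-module $M$ a natural isomorphism of abelian groups
\[
D_{S\,|\,A}(R) \otimes_{D_{S\,|\,A}} M\ \cong\ R \otimes_S M,
\]
and the transport of structure makes the right-hand side into a $D_{R|A}$-module.

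For part~\eqref{lemma:bimodule:a}, I would take $M = S_z$. Extension of scalars yields $R \otimes_S S_z \cong R_{\bar z}$ as $R$-modules, where $\bar z$ denotes the image of $z$ in $R$, so $R_z$ in the statement is interpreted as $R_{\bar z}$. Under the composite isomorphism, $\delta \otimes (s/z^n)$ corresponds to $\tilde\delta(s/z^n)$, where $\tilde\delta \colon S_z \to R_{\bar z}$ is the unique extension of $\delta \colon S \to R$ to a differential operator on localizations, given by~\eqref{equation:localization2}. To check $D_{R|A}$-linearity, I would argue that for $\theta \in D_{R|A}$, the transported action $(\theta \circ \delta) \otimes (s/z^n)$ corresponds to $\widetilde{\theta \circ \delta}(s/z^n)$, which coincides with $\tilde\theta\bigl(\tilde\delta(s/z^n)\bigr)$ by the uniqueness of the extension of an $A$-linear differential operator to a localization.

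For part~\eqref{lemma:bimodule:b}, I would take $M = H^{t+1}_{(\bsz)}(S)$. Since this is the top cohomology of the \v Cech complex, it is the cokernel of the natural map $\bigoplus_i S_{z_0 \cdots \hat{z}_i \cdots z_t} \to S_{z_0 \cdots z_t}$. Applying the right exact functor $R \otimes_S -$ and invoking part~\eqref{lemma:bimodule:a} on each of the localizations, one obtains the isomorphism $R \otimes_S H^{t+1}_{(\bsz)}(S) \cong H^{t+1}_{(\bsz)}(R)$, with $D_{R|A}$-linearity inherited termwise from the previous part.

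The main obstacle is the verification of $D_{R|A}$-linearity: the identification $D_{S\,|\,A}(R) \otimes_{D_{S\,|\,A}} M \cong R \otimes_S M$ is naturally only an isomorphism of abelian groups, and checking that the transported $D_{R|A}$-action agrees with the intrinsic $D_{R|A}$-action on $R_z$ or $H^{t+1}_{(\bsz)}(R)$ (defined via the \v Cech complex and the rule~\eqref{equation:localization2}) relies essentially on the uniqueness of extension of differential operators to localizations, together with the compatibility of this construction with the \v Cech differentials.
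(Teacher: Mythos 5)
Your proof is correct and follows essentially the same route as the paper: identify $D_{S|A}(R)$ with $R\otimes_S D_{S|A}$ (the paper writes this as $D_{S|A}/ID_{S|A}$, which is the same thing, obtained there from the exactness of $D_{S|A}(-)$ rather than from the explicit $\partial_{a_0,\dots,a_d}$ basis), deduce $D_{S|A}(R)\otimes_{D_{S|A}} S_z\cong R_z$ with the element $\delta\otimes(s/z^n)$ going to the image of $\delta(s/z^n)$, verify $D_{R|A}$-linearity by comparing the intrinsic action on $R_z$ with the transported action via the localization formula, and then get part~\eqref{lemma:bimodule:b} by applying the right-exact functor to the \v Cech presentation of $H^{t+1}_{(\bsz)}(S)$. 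The only cosmetic difference is that the paper reduces the $D_{R|A}$-linearity check to elements of the form $\bar 1\otimes s/z^n$ and then lifts $\gamma\in D_{R|A}$ to some $\mu\in D_{S|A}$ preserving $I$, whereas you phrase the same verification as uniqueness of extensions of differential operators to localizations.
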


\begin{proof}
Set $I\colonequals\ker(S\to R)$. Since $S$ is a polynomial ring over $A$, the functor $D_{S|A}(-)$ from $S$-modules to~$D_{S|A}$-modules is exact~\cite[\S2.2]{SV}, so there is an exact sequence of right~$D_{S|A}$-modules
\[
\CD
0 @>>> D_{S\,|\,A}(I) @>>> D_{S\,|\,A} @>>> D_{S\,|\,A}(R) @>>> 0.
\endCD
\]
The inclusion above may be used to identify $D_{S|A}(I)$ with $I D_{S|A}$, so $D_{S|A}(R)\cong D_{S|A}/ID_{S|A}$ as right $D_{S|A}$-modules. Thus, one has an isomorphism of $R$-modules
\[
D_{S\,|\,A}(R)\otimes_{D_{S\,|\,A}} S_z\ \cong\ S_z/I S_z\ \cong\ R_z,
\qquad\text{given by} \quad
\bar{\delta}\otimes (s/z^n) \mapsto \bar{\delta(s/z^n)},
\]
where $\bar{\delta}$ denotes the image of $\delta\in D_{S|A}$ modulo $ID_{S|A}$. To see that this isomorphism is~$D_{R|A}$-linear, note that each element of $D_{S|A}(R)\otimes_{D_{S|A}}S_z$ may be written as $\bar{1}\otimes s/z^n$, and any element $\gamma\in D_{R|A}$ can be written as $\mu + I D_{S|A}$ for some $\mu\in D_{S|A}$ such that $\mu(I)\subseteq I$. The compatibility condition then boils down to checking that $\gamma(\bar{s/z^n})=\bar{\mu(s/z^n)}$, which is clear from the formula~\eqref{equation:localization2}, proving~\eqref{lemma:bimodule:a}.

Next, consider the right-exact sequence of $D_{S|A}$-modules
\[
\CD
\sum_i S_{z_0 \cdots \hat{z}_i \cdots z_t} @>>> S_{z_0\cdots z_t} @>>> H^{t+1}_{(\bsz)}(S) @>>> 0,
\endCD
\]
and the right-exact sequence of $D_{R|A}$-modules
\[
\CD
\sum_i R_{z_0 \cdots \hat{z}_i \cdots z_t} @>>> R_{z_0\cdots z_t} @>>> H^{t+1}_{(\bsz)}(R) @>>> 0.
\endCD
\]
Applying $D_{S|A}(R)\otimes_{D_{S|A}}(-)$ to the first, we obtain a commutative diagram of $D_{R|A}$-modules 
\[
\minCDarrowwidth16pt
\CD
D_{S|A}(R) \otimes_{D_{S|A}} \sum_i S_{z_0 \cdots \hat{z}_i \cdots z_t} @>>> D_{S|A}(R) \otimes_{D_{S|A}} S_{z_0\cdots z_t} @>>> D_{S|A}(R) \otimes_{D_{S|A}} H^{t+1}_{(\bsz)}(S) @>>> 0 \\
@VVV @VVV @. @.\\
\sum_i R_{z_0 \cdots \hat{z}_i \cdots z_t} @>>> R_{z_0\cdots z_t} @>>> H^{t+1}_{(\bsz)}(R) @>>> 0,
\endCD
\]
where the vertical maps are isomorphisms by~\eqref{lemma:bimodule:a}. This induces the $D_{R|A}$-module isomorphism as claimed in~\eqref{lemma:bimodule:b},
\end{proof}

\subsection{Frobenius lifting and \texorpdfstring{$p$}{p}-derivations}
\label{subsection:FrobeniusLifting}

Let $T$ be a ring, and let $p>0$ be a prime integer that is not a unit in $T$. A \emph{lift} of the Frobenius endomorphism $F$ on $T/pT$ is a ring homomorphism $\Lambda_p\colon T\to T$ such that the following diagram commutes
\[
\CD
T @>\Lambda_p>> T \\
@VVV @VVV \\
T/pT @>F>> T/pT.
\endCD
\]

As we show next, the existence of a Frobenius lift modulo $p^2$ is inherited by rings that are direct summands; see also~\cite[Lemma~4.1]{Zdanowicz}.

\begin{proposition}
\label{proposition:lift:summand}
Let $T$ be a ring, and let $p>0$ be a prime integer that is not a unit in $T$. Let $R$ be a subring of $T$ that is a direct summand of $T$ as an $R$-module.

If the Frobenius endomorphism on $T/pT$ lifts to an endomorphism of $T/p^2T$, then the Frobenius endomorphism on $R/pR$ lifts to an endomorphism of $R/p^2R$.
\end{proposition}

\begin{proof}
Note that $R/p^2R$ is a direct summand of $T/p^2T$; after a change of notation, we may assume that $p^2=0$ in $R$ and $T$. We use $\iota\colon R\to T$ for the inclusion, and $\rho\colon T\to R$ to denote an $R$-linear splitting.

Let $\Lambda_p\colon T\to T$ denote a Frobenius lift; we claim that
\[
\rho\circ\Lambda_p\circ\iota\colon R\to R
\]
is an endomorphism of $R$. Since each map is additive, so is the composition. Given elements $r_i$ in $R$, there exist elements $t_i$ in $T$ such that $\Lambda_p\circ\iota(r_i) = r_i^p+pt_i$. Hence
\begin{alignat*}2
\rho\circ\Lambda_p\circ\iota(r_1r_2) &=\ \rho\big((r_1^p+pt_1)(r_2^p+pt_2)\big)\\
&=\ r_1^pr_2^p + r_1^p p\rho(t_2) + r_2^p p\rho(t_1)\\
&=\ \big(r_1^p + p\rho(t_1)\big)\big(r_2^p + p\rho(t_2)\big)\\
&=\ \big(\rho\circ\Lambda_p\circ\iota(r_1)\big)\big(\rho\circ\Lambda_p\circ\iota(r_2)).
\end{alignat*}
It is readily verified that $\rho\circ\Lambda_p\circ\iota$ induces the Frobenius endomorphism on $R/pR$.
\end{proof}

\begin{definition}[Buium \cite{Buium}, Joyal \cite{Joyal}]
Let $T$ be a ring, and $p>0$ be a prime integer. A~\emph{p-derivation} on $T$ is a map $\phi_p\colon T\to T$ that satisfies the following for all $a,b\in T$:
\begin{enumerate}[\,\rm(i)]
\item $\phi_p(1)=0$,
\item $\phi_p(ab)=a^p \phi_p(b) + b^p \phi_p(a) + p \phi_p(a) \phi_p(b)$, and
\item $\phi_p(a+b)=\phi_p(a) + \phi_p(b) + C_p(a,b)$, 
\end{enumerate}
where $C_p(x,y)$ is the polynomial $\frac{1}{p}(x^p+y^p-(x+y)^p)$ regarded as an element of $\ZZ[x,y]$. 
\end{definition}

It follows from the above that
\[
\phi_p(a+pb)\ \equiv\ \phi_p(a) + b^p \mod p.
\]

If $\phi_p$ is a $p$-derivation on $T$, then the map $\Lambda_p\colon T\to T$ given by $\Lambda_p(t)=t^p+p \phi_p(t)$ is a lift of the Frobenius; conversely, if $p$ is a nonzerodivisor on $T$, and $\Lambda_p\colon T\to T$ is a lift of the Frobenius, then $\phi_p\colon T\to T$ with $\phi_p(t)= \frac{1}{p}(\Lambda_p(t)-t^p)$ is a $p$-derivation on $T$.

\begin{example}
\label{example:standard:p:der}
Fix a prime integer $p>0$, and let $S$ be a polynomial ring over $\ZZ$ in the indeterminates $\bsx\colonequals x_0,\dots,x_d$. We refer to the $\ZZ$-algebra homomorphism
\[
\Lambda_p\colon S\to S\qquad\text{with }\ \Lambda_p(x_i)=x_i^p\ \text{ for each $i$}
\]
as the \emph{standard lift of the Frobenius} with respect to $\bsx$, and the corresponding $p$-derivation
\[
\phi_p\colon S\to S\qquad\text{with }\ \phi_p(s)=\frac{\Lambda_p(s)-s^p}{p}
\]
as the \emph{standard $p$-derivation} with respect to $\bsx$.
\end{example}

We record the following compatibility for $p$-derivations used in the sequel: Let $S$ and~$S'$ be polynomial rings over $\ZZ$ in the indeterminates $\bsx$ and $\bsx'$ respectively. Let~$\Upsilon\colon S\to S'$ be a ring homomorphism such that $\Upsilon(x)\in\bsx'\cup\{0,1\}$ for each $x\in\bsx$, i.e., the homomorphism~$\Upsilon$ either takes an indeterminate $x\in\bsx$ to an indeterminate $x'\in\bsx'$, or specializes it to $0$ or $1$. Let~$\Lambda_p$ and $\phi_p$ denote the standard lift of the Frobenius and the corresponding $p$-derivation on $S$ with respect to $\bsx$, and likewise let $\Lambda_p'$ and $\phi_p'$ be the corresponding maps for~$S'$ with respect to~$\bsx'$. Then the following diagrams commute:
\[
\CD
S @>\Upsilon>> S' @. \qquad\qquad\qquad S @>\Upsilon>> S'\\
@VV\Lambda_pV @VV{\Lambda_p'}V \qquad\qquad\qquad @VV\phi_pV @VV\phi_p'V\\
S @>\Upsilon>> S', @. \qquad\qquad\qquad S @>\Upsilon>> S'.
\endCD
\]

We recall the following criterion, due to Zdanowicz, for the existence of a lift of the Frobenius endomorphism on a hypersurface:

\begin{proposition}[{\cite[Corollary~4.9]{Zdanowicz}}]
\label{proposition:zdanowicz}
Let $p$ be a prime integer, $S\colonequals\ZZ[\bsx]$ a polynomial ring, and $\phi_p\colon S\to S$ the standard $p$-derivation on $S$ with respect to $\bsx\colonequals x_0,\dots,x_d$. For~$f\in S$, set~$R\colonequals S/fS$. Suppose that $R/p^2R$ is flat over $\ZZ/p^2 \ZZ$.

Then the Frobenius endomorphism on the hypersurface $R/pR$ lifts to an endomorphism of $R/p^2R$ if and only if
\[
\phi_p(f)\ \in\ \bigg(p,\ f,\ \Big(\frac{\partial f}{\partial x_0}\Big)^p,\ \dots,\ \Big(\frac{\partial f}{\partial x_d}\Big)^p \bigg)S.
\]
\end{proposition}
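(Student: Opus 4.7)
The plan is to transfer the question to the polynomial ring $S/p^2S$ where all Frobenius lifts can be parametrised, and to extract the stated membership condition via a Taylor expansion modulo $p^2$. First, I would prove that a Frobenius lift on $R/p^2R$ exists if and only if there is a ring endomorphism $\mu\colon S/p^2S\to S/p^2S$ lifting Frobenius on $S/pS$ and satisfying $\mu(f)\in (f)\,S/p^2S$. Any such $\mu$ obviously descends to $R/p^2R$. Conversely, given a lift $\tau\colon R/p^2R\to R/p^2R$, choose for each $i$ a preimage $y_i\in S/p^2S$ of $\tau(\bar{x}_i)$; since $\tau$ lifts Frobenius, $y_i-x_i^p\in (p,f)\,S/p^2S$, so one may write $y_i=x_i^p+p\alpha_i+f\beta_i$. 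Setting $\mu(x_i)\colonequals y_i-f\beta_i=x_i^p+p\alpha_i$ defines a Frobenius lift of $S/p^2S$; because $\mu(x_i)$ and $y_i$ agree modulo $f$, the composition $S/p^2S\xrightarrow{\mu}S/p^2S\to R/p^2R$ coincides with $\tau$ composed with the surjection, forcing $\mu(f)$ to map to $\tau(0)=0$ in $R/p^2R$, i.e.\ $\mu(f)\in (f)\,S/p^2S$.

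Second, with this reduction in hand, any Frobenius lift on $S/p^2S$ has the form $\mu(x_i)=x_i^p+pg_i$ with $g_i\in S/pS$. A Taylor expansion modulo $p^2$, in which every term quadratic in the $pg_i$ vanishes, gives
\[
\mu(f)\ \equiv\ \Lambda_p(f)+p\sum_{i=0}^d g_i\,\Lambda_p\!\left(\frac{\partial f}{\partial x_i}\right)\pmod{p^2}.
\]
Substituting $\Lambda_p(f)=f^p+p\phi_p(f)$ and using the key congruence $\Lambda_p(\partial_i f)\equiv(\partial_i f)^p\pmod p$, which upon multiplication by $p$ becomes $p\Lambda_p(\partial_i f)\equiv p(\partial_i f)^p\pmod{p^2}$, yields
\[
\mu(f)\ \equiv\ f^p+p\phi_p(f)+p\sum_i g_i(\partial_i f)^p\pmod{p^2}.
\]

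Third, impose $\mu(f)=fh$ in $S/p^2S$. The hypothesis that $R/p^2R$ is flat over $\ZZ/p^2\ZZ$ amounts to $f\notin pS$, i.e.\ to $f$ being a nonzerodivisor on $S/pS$. Reducing $\mu(f)=fh$ modulo $p$ then forces $h\equiv f^{p-1}\pmod p$, so write $h=f^{p-1}+ph'$. Subtracting $f^p$ and dividing by $p$ (a nonzerodivisor on $S$), the equation becomes
\[
\phi_p(f)+\sum_i g_i(\partial_i f)^p\ \equiv\ f h'\pmod p,
\]
which is solvable for some $g_i,h'\in S/pS$ precisely when $\phi_p(f)\in\bigl(p,\,f,\,(\partial_0 f)^p,\dots,(\partial_d f)^p\bigr)S$. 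Chaining the three equivalences produces the proposition.

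The main obstacle I expect is the bookkeeping in the first step: producing, from a Frobenius lift of the quotient $R/p^2R$, an actual Frobenius lift of the polynomial ring $S/p^2S$ that preserves the ideal generated by $f$. Once that equivalence is settled, steps two and three are purely formal manipulations with the $p$-derivation identity $\Lambda_p=(\cdot)^p+p\phi_p$ and Taylor expansion.
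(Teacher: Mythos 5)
The paper does not prove this statement; it imports it directly from \cite[Corollary~4.9]{Zdanowicz}. Your proof is a correct, self-contained derivation along the expected lines: reduce to Frobenius lifts of the polynomial ring $S/p^2S$ that preserve $(f)$, parametrize them as $x_i\mapsto x_i^p+pg_i$, and Taylor-expand $\mu(f)$ modulo $p^2$. The transfer step (your Step~1) is the one place where care is needed, and your choice $\mu(x_i)\colonequals y_i-f\beta_i$ handles it cleanly; Steps~2 and~3 are then routine.

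One small imprecision: the flatness of $R/p^2R$ over $\ZZ/p^2\ZZ$ is not exactly equivalent to $f\notin pS$. By the local flatness criterion, $R/p^2R=S/(p^2,f)$ is flat over $\ZZ/p^2\ZZ$ if and only if $\ker(p\colon R/p^2R\to R/p^2R)=pR/p^2R$, which holds when $f\notin pS$, fails when $f\in pS\setminus p^2S$, and holds again (trivially) when $f\in p^2S$, since then $R/p^2R=S/p^2S$. In the last case your cancellation in Step~3 breaks down --- $f$ is a zerodivisor mod $p$, so one cannot deduce $h\equiv f^{p-1}$ --- but both sides of the claimed equivalence are trivially true there (the standard lift works, and $\phi_p(f)\in pS$), so the proposition is unharmed. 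A one-line remark disposing of the case $f\in pS$ would make the argument airtight; as written there is a gap only in a degenerate case that never arises in the paper's applications.
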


\section{Differential operators via local cohomology}
\label{section:isomorphism}

\subsection{Polynomial rings}
\label{subsection:polynomial}

Let $A$ be a commutative ring, and $S\colonequals A[\bsx]$ the polynomial ring over $A$ in the indeterminates $\bsx\colonequals x_0,\dots,x_d$. Then $P_{S|A}$ is a polynomial ring over $A$ in the indeterminates $x_i\otimes\!1$ and $1\!\otimes x_i$, for $0\le i \le d$. Since we view $P_{S|A}$ as an $S$-module via the map $s\mapsto s\otimes\!1$, we simply write $x_i$ for $x_i\otimes\!1$. Set $y_i\colonequals 1\!\otimes x_i$, so that
\[
P_{S\,|\,A}\ =\ A[\bsx,\bsy]\ =\ S[\bsy],
\]
where $\bsy\colonequals y_0,\dots,y_d$. The elements
\[
y_0-x_0,\ y_1-x_1,\ \dots,\ y_d-x_d
\]
form a generating set for the ideal $\Delta_{S|A}$ of $P_{S|A}$. Using this generating set, we consider the sequence of ideals $\Delta^{[n]}_{S|A}$, the rings $P^{[n]}_{S|A}$, and the filtration $D^{[n]}_{S|A}$, as defined in \S\ref{subsection:differential operators}.

Note that the elements $y_0-x_0,\ y_1-x_1,\ \dots,\ y_d-x_d$ are algebraically independent generators for $P_{S|A}$ as an $S$-algebra. Thus, $P^{[n]}_{S|A}$ is a free $S$-module with basis
\[
(y_0-x_0)^{a_0} \cdots (y_d-x_d)^{a_d}\qquad\text{where }\ 0\le a_i\le n.
\]
Likewise, $\Hom_S(P^{[n]}_{S|A},\,S)$ is the free $S$-module with the dual basis
\[
\big((y_0-x_0)^{a_0} \cdots (y_d-x_d)^{a_d}\big)^\star,\qquad\text{where }\ 0\le a_i\le n,
\]
and $(-)^\star$ denotes the corresponding element of the dual basis. Moreover, there is a $P_{S|A}$-module isomorphism defined $S$-linearly by the rule
\begin{align*}
\gamma_n\colon P^{[n]}_{S\,|\,A} &\ \to\ \Hom_S(P^{[n]}_{S\,|\,A},\ S) \\
(y_0-x_0)^{a_0}\cdots (y_d-x_d)^{a_d} &\ \mapsto\ \big((y_0-x_0)^{n-a_0} \cdots (y_d-x_d)^{n-a_d}\big)^\star.
\end{align*}

\begin{proposition}
\label{proposition:iso:koszul}
For each $n\ge0$, one has $P_{S|A}$-module isomorphisms
\[
\CD
H^{d+1}(\Delta^{[n+1]}_{S\,|\,A};\ P_{S\,|\,A}) @>\gamma_n>> \Hom_S(P^{[n]}_{S\,|\,A},\ S) @>\rho^*>> D^{[n]}_{S\,|\,A}.
\endCD
\]
\end{proposition}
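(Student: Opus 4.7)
The map $\rho^*$ has been shown to be a $P_{S|A}$-module isomorphism in \S\ref{subsection:differential operators}, so the task reduces to showing $\gamma_n$ is a $P_{S|A}$-module isomorphism from the Koszul cohomology $H^{d+1}(\Delta^{[n+1]}_{S|A};\,P_{S|A})$ to $\Hom_S(P^{[n]}_{S|A},\,S)$. The plan has three steps: identify the Koszul cohomology with $P^{[n]}_{S|A}$, verify that $\gamma_n$ is an $S$-module isomorphism on this source, and check $P_{S|A}$-linearity.

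For the first step, the elements $y_0-x_0,\ldots,y_d-x_d$ are algebraically independent generators of $P_{S|A}$ over $S$, so they form a regular sequence on $P_{S|A}$; consequently, so does the sequence of $(n+1)$st powers. The Koszul cohomology of a regular sequence vanishes below the top degree, and in the top degree is the cyclic module
\[
H^{d+1}(\Delta^{[n+1]}_{S|A};\ P_{S|A})\ \cong\ P_{S|A}/\Delta^{[n+1]}_{S|A}\ =\ P^{[n]}_{S|A}.
\]
In terms of the explicit $S$-bases of $P^{[n]}_{S|A}$ and of $\Hom_S(P^{[n]}_{S|A},\,S)$ recorded just before the statement, both of rank $(n+1)^{d+1}$ and indexed by multi-exponents $a=(a_0,\ldots,a_d)$ with $0\le a_i\le n$, the map $\gamma_n$ is the $S$-linear bijection exchanging these basis elements under $a\mapsto n\cdot\mathbf{1}-a$, hence an $S$-module isomorphism.

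The substantive point is then $P_{S|A}$-linearity. Since $P_{S|A}$ is generated over $S$ by the elements $y_j - x_j$, it suffices to verify compatibility with multiplication by each of these, which reduces to a two-case analysis for each index $j$. On $P^{[n]}_{S|A}$, multiplication by $y_j - x_j$ carries $(y-x)^a$ to $(y-x)^{a+e_j}$ when $a_j<n$, and kills it when $a_j=n$. On $\Hom_S(P^{[n]}_{S|A},\,S)$, the dual action of $y_j-x_j$ (viewed via its right action on $P^{[n]}_{S|A}$, i.e.\ $(q\cdot\delta)(m)=\delta(qm)$) sends $((y-x)^a)^\star$ to $((y-x)^{a-e_j})^\star$ when $a_j\ge 1$ and kills it when $a_j=0$. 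These cases correspond precisely under the involution $a\mapsto n\cdot\mathbf{1}-a$ that $\gamma_n$ implements, giving the $P_{S|A}$-linearity. The main (modest) obstacle is pinning down the conventions for the $P_{S|A}$-module structure on $\Hom_S(P^{[n]}_{S|A},\,S)$—in particular, that the second tensor factor acts by precomposition with multiplication on $P^{[n]}_{S|A}$; once this is fixed, the verification is a direct computation on basis elements. Composing $\gamma_n$ with $\rho^*$ then yields the desired $P_{S|A}$-module isomorphism from $H^{d+1}(\Delta^{[n+1]}_{S|A};\,P_{S|A})$ to $D^{[n]}_{S|A}$.
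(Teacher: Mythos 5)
Your proposal is correct and follows the same route as the paper: identify the top Koszul cohomology of the regular sequence $(y_0-x_0)^{n+1},\dots,(y_d-x_d)^{n+1}$ with $P^{[n]}_{S|A}$, then compose with $\gamma_n$ and $\rho^*$. The only difference is that you explicitly verify the $P_{S|A}$-linearity of $\gamma_n$ on the generators $y_j-x_j$, whereas the paper simply asserts it in the paragraph preceding the proposition and refers to that discussion.
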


\begin{proof}
We need only observe that the Koszul cohomology module
\[
H^{d+1}(\Delta^{[n+1]}_{S\,|\,A};\ P_{S\,|\,A})
\]
coincides with~$P^{[n]}_{S|A}$. The rest is immediate from the preceding discussion.
\end{proof}

To make the isomorphism $\rho^*$ completely explicit, note first that $\rho(f(\bsx))=f(\bsy)$. Now,
\begin{align*}
\rho^*\Big(\big((y_0-x_0)^{a_0} &\cdots (y_d-x_d)^{a_d}\big)^\star\Big) \big(f(\bsx)\big) \\
&=\ \big((y_0-x_0)^{a_1} \cdots (y_d-x_d)^{a_d}\big)^\star \big(f(\bsy)\big) \\ 
&=\ \big((y_0-x_0)^{a_0} \cdots (y_d-x_d)^{a_d}\big)^\star \Big(f\big((y_0-x_0)+x_0,\ \dots,\ (y_d-x_d)+x_d\big)\Big)\\ 
&=\ \partial_{a_0,\dots,a_d}\big(f(\bsx)\big),
\end{align*}
where the last equality uses the Taylor expansion of a polynomial. It follows that
\[
\rho^*\Big(\big((y_0-x_0)^{a_0} \cdots (y_d-x_d)^{a_d}\big)^\star\Big)\ =\ \partial_{a_0,\dots,a_d}.
\]

\begin{proposition}
\label{proposition:iso:local:S}
There is a $P_{S|A}$-module isomorphism
\[
\CD
H^{d+1}_{\Delta_{S\,|\,A}}(P_{S\,|\,A}) @>{\ \ \rho^*\,\circ\,\gamma\ \ }>> D_{S\,|\,A},
\endCD
\]
where $\gamma$ is the isomorphism
\[
H^{d+1}_{\Delta_{S\,|\,A}}(P_{S\,|\,A})\ =\ \varinjlim_n H^{d+1}_{\Delta_{S\,|\,A}}(P_{S\,|\,A})\to\varinjlim_n\Hom_S(P^{[n]}_{S\,|\,A},\ S).
\]
\end{proposition}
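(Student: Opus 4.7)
The plan is to assemble the claimed isomorphism as the direct limit of the level-$n$ isomorphisms $\rho^* \circ \gamma_n$ from Proposition~\ref{proposition:iso:koszul}. On the left, the Koszul-to-\v Cech comparison of~\S\ref{subsection:koszul:local}, together with the cofinality of the ideals $\Delta^{[n+1]}_{S|A}$ with the powers $\Delta^{n}_{S|A}$, gives
\[
H^{d+1}_{\Delta_{S|A}}(P_{S|A})\ \cong\ \varinjlim_n H^{d+1}\bigl(\Delta^{[n+1]}_{S|A};\ P_{S|A}\bigr).
\]
On the right, $\varinjlim_n D^{[n]}_{S|A} = D_{S|A}$ directly from the definition of the filtration. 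It therefore suffices to check that the families $\{\gamma_n\}$ and $\{\rho^*\}$ are compatible with the respective transition maps.

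For $\gamma$: under the identification of the top Koszul cohomology with $P^{[n]}_{S|A}$, the transition map $H^{d+1}(\Delta^{[n+1]}_{S|A};\,P_{S|A}) \to H^{d+1}(\Delta^{[n+2]}_{S|A};\,P_{S|A})$ is induced by the standard comparison of Koszul complexes recalled in~\S\ref{subsection:koszul:local}, and at the top spot amounts to multiplication by $\prod_{i=0}^{d}(y_i - x_i)$. So the basis element $(y-x)^a \in P^{[n]}_{S|A}$ (with multi-index $0 \le a_i \le n$) is sent to $(y-x)^{a+\mathbf{1}} \in P^{[n+1]}_{S|A}$. Applying $\gamma_{n+1}$ yields $\bigl((y-x)^{n-a}\bigr)^\star \in \Hom_S(P^{[n+1]}_{S|A},\,S)$. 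On the other hand, applying $\gamma_n$ first gives $\bigl((y-x)^{n-a}\bigr)^\star \in \Hom_S(P^{[n]}_{S|A},\,S)$, whose image under the natural inclusion $\Hom_S(P^{[n]}_{S|A},\,S) \hookrightarrow \Hom_S(P^{[n+1]}_{S|A},\,S)$ dual to the surjection $P^{[n+1]}_{S|A} \twoheadrightarrow P^{[n]}_{S|A}$ is again $\bigl((y-x)^{n-a}\bigr)^\star$. The two agree, so the $\gamma_n$ assemble into an isomorphism $\gamma$ in the limit.

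For $\rho^*$: the explicit computation in the discussion following Proposition~\ref{proposition:iso:koszul} shows that $\rho^*\bigl(((y-x)^a)^\star\bigr) = \partial_{a_0,\ldots,a_d}$, a description that is independent of the ambient level $n$ as long as each $a_i \le n$. Consequently $\rho^*$ is compatible with the inclusions $D^{[n]}_{S|A} \hookrightarrow D^{[n+1]}_{S|A}$ on the right and with the dual inclusions on the $\Hom$ side on the left. Passing to the direct limit then produces the $P_{S|A}$-module isomorphism $\rho^* \circ \gamma$ as claimed. The only non-bookkeeping step is pinning down the top Koszul transition as multiplication by $\prod_i(y_i - x_i)$; once this is in hand, the compatibility verifications reduce to a direct computation on dual bases, and the $P_{S|A}$-linearity of $\gamma$ and $\rho^*$ is inherited from each level.
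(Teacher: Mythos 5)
Your proposal is correct and follows essentially the same approach as the paper: set up the level-$n$ isomorphisms from Proposition~\ref{proposition:iso:koszul}, show they form a compatible ladder with the Koszul-to-local-cohomology transition maps on the left (multiplication by $\prod_i(y_i-x_i)$), the pullback along $P^{[n+1]}_{S|A}\onto P^{[n]}_{S|A}$ in the middle, and the inclusions $D^{[n]}_{S|A}\hookrightarrow D^{[n+1]}_{S|A}$ on the right, then pass to the direct limit. The paper simply asserts the commutativity of the resulting diagram, whereas you verify it explicitly on dual basis elements; that is a useful unpacking but not a different argument.
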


\begin{proof}
For each $n\ge0$, one has a commutative diagram
\[
\CD
H^{d+1}(\Delta^{[n+1]}_{S\,|\,A};\ P_{S\,|\,A}) @>\gamma_n>> \Hom_S(P^{[n]}_{S\,|\,A},\ S) @>\rho^*>> D^{[n]}_{S\,|\,A}\\
@VV\prod_i (y_i-x_i)V @VVV @VVV\\
H^{d+1}(\Delta^{[n+2]}_{S\,|\,A};\ P_{S\,|\,A}) @>\gamma_{n+1}>> \Hom_S(P^{[n+1]}_{S\,|\,A},\ S) @>\rho^*>> D^{[n+1]}_{S\,|\,A},
\endCD
\]
where the canonical surjection $P^{[n+1]}_{S|A}\onto P^{[n]}_{S|A}$ induces the map in the middle column. The left column realizes local cohomology as the direct limit of Koszul cohomology; the maps in the right column are injective, with $D_{S|A}$ as the directed union.
\end{proof}

\begin{example}
\label{example:identity:operator}
The isomorphism $\rho^*\circ\gamma$ maps the local cohomology element
\[
\eta_S\colonequals\left[\frac{1}{(y_0-x_0) \cdots (y_d-x_d)}\right]
\]
in $H^{d+1}_{\Delta_{S|A}}(P_{S|A})$ to the differential operator in $D_{S|A}$ that is the identity map. More generally, for integers $a_i\ge 0$, the image of the local cohomology element
\[
\left[\frac{1}{(y_0-x_0)^{a_0+1}\cdots(y_d-x_d)^{a_d+1}}\right]\ \in\ H^{d+1}_{\Delta_{S\,|\,A}}(P_{S\,|\,A}),
\]
under $\rho^*\circ\gamma$, is the differential operator $\partial_{a_0,\dots,a_d}\in D_{S|A}$.
\end{example}

There are analogous isomorphisms for differential operators from~$S$ to~$M$, as below:

\begin{proposition}
\label{proposition:iso:S:modules}
For $M$ an $S$-module and $n\ge0$, there are $P_{S|A}$-module isomorphisms
\[
\CD
H^{d+1}(\Delta^{[n+1]}_{S\,|\,A};\ P_{S\,|\,A}\otimes_S M) @>\gamma_n>> \Hom_S(P^{[n]}_{S\,|\,A},\ M) @>\rho^*>> D^{[n]}_{S\,|\,A}(M)
\endCD
\]
and
\[
\CD
H^{d+1}_{\Delta_{S\,|\,A}}(P_{S\,|\,A}\otimes_S M)@>\ \ \rho^*\,\circ\,\gamma\ \ >> D_{S\,|\,A}(M).
\endCD
\]
The maps $\gamma_n$ and $\gamma$ are obtained from those in Propositions~\ref{proposition:iso:koszul} and~\ref{proposition:iso:local:S} by applying $-\otimes_SM$.
\end{proposition}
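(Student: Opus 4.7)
The plan is to mirror the proofs of Propositions~\ref{proposition:iso:koszul} and~\ref{proposition:iso:local:S}, exploiting the fact that $P^{[n]}_{S|A}$ is a free $S$-module of finite rank $(n+1)^{d+1}$. My first step is to check that the identification of the top Koszul cohomology survives the base change $-\otimes_S M$. Since $y_0-x_0,\dots,y_d-x_d$ are algebraically independent generators of $P_{S|A}$ as an $S$-algebra, $P_{S|A}\otimes_S M$ is a polynomial-like module over $M$ in these variables, and in particular $(y_0-x_0)^{n+1},\dots,(y_d-x_d)^{n+1}$ is regular on $P_{S|A}\otimes_S M$. Because the Koszul complex $K^\bullet(\Delta^{[n+1]}_{S|A};\,P_{S|A})$ consists of finitely generated free $P_{S|A}$-modules and its top cohomology is the cokernel of the final differential, right-exactness of $-\otimes_S M$ yields
\[
H^{d+1}(\Delta^{[n+1]}_{S|A};\ P_{S|A}\otimes_S M)\ \cong\ P^{[n]}_{S|A}\otimes_S M.
\]

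In parallel, since $P^{[n]}_{S|A}$ is finitely generated and free over $S$, the canonical evaluation map $\Hom_S(P^{[n]}_{S|A},\,S)\otimes_S M\to\Hom_S(P^{[n]}_{S|A},\,M)$ is a $P_{S|A}$-linear isomorphism. The desired map $\gamma_n$ is then obtained by composing the identification above with the base change to $M$ of the $\gamma_n$ of Proposition~\ref{proposition:iso:koszul}, which matches the recipe indicated in the statement. The map $\rho^*\colon\Hom_S(P^{[n]}_{S|A},\,M)\to D^{[n]}_{S|A}(M)$ is the module-valued form of Grothendieck's identification \cite[Proposition~16.8.8]{EGA4}, already recorded in \S\ref{subsection:differential operators}; explicitly $\delta\mapsto\delta\circ\rho$ with the same formula $\rho(f(\bsx))=f(\bsy)$ and the same Taylor expansion argument, which establishes the first claimed sequence of isomorphisms.

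For the global isomorphism $H^{d+1}_{\Delta_{S|A}}(P_{S|A}\otimes_S M)\to D_{S|A}(M)$, I would pass to the direct limit of the stepwise diagrams exactly as in the proof of Proposition~\ref{proposition:iso:local:S}: local cohomology is the direct limit of Koszul cohomology, and the filtration $D^{[n]}_{S|A}(M)$ has $D_{S|A}(M)$ as its directed union, with the commutative squares built from multiplication by $\prod_i(y_i-x_i)$ on the Koszul side and the canonical surjections $P^{[n+1]}_{S|A}\onto P^{[n]}_{S|A}$ on the Hom side. No step presents a genuine obstacle: the only point meriting care is the observation that $P^{[n]}_{S|A}$ is finitely generated free over $S$, which is precisely what lets the base change of $\gamma_n$ proceed without any flatness hypothesis on $M$.
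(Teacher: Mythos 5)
Your proof is correct and follows essentially the same route as the paper: use right-exactness of $-\otimes_S M$ to identify the top Koszul cohomology with $P^{[n]}_{S|A}\otimes_S M$, use freeness of $P^{[n]}_{S|A}$ to identify $\Hom_S(P^{[n]}_{S|A},\,S)\otimes_S M$ with $\Hom_S(P^{[n]}_{S|A},\,M)$, and pass to the direct limit as in Proposition~\ref{proposition:iso:local:S}. The remark about regularity of $(y_0-x_0)^{n+1},\dots,(y_d-x_d)^{n+1}$ on $P_{S|A}\otimes_S M$ is true but superfluous here, since only right-exactness is needed for the top cohomology.
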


\begin{proof}
The right exactness of $-\otimes_S M$ gives
\[
H^{d+1}(\Delta^{[n+1]}_{S\,|\,A};\ P_{S\,|\,A}\otimes_S M)\ \cong\ H^{d+1}(\Delta^{[n+1]}_{S\,|\,A};\ P_{S\,|\,A})\otimes_S M\ \cong\ P^{[n]}_{S\,|\,A} \otimes_S M.
\]
Since $P^{[n]}_{S|A}$ is a free $S$-module, one also has the isomorphism
\[
\Hom_S(P^{[n]}_{S\,|\,A},\ M)\ \cong\ \Hom_S(P^{[n]}_{S\,|\,A},\ S)\otimes_S M.
\]
Thus, $\gamma_n$ is an isomorphism, obtained from the corresponding isomorphism in Proposition~\ref{proposition:iso:koszul}; $\rho^*$ is an isomorphism as discussed in \S\ref{subsection:differential operators}. The transition from Koszul cohomology to local cohomology follows as in the proof of Proposition~\ref{proposition:iso:local:S}.
\end{proof}

\subsection{Hypersurfaces}
\label{subsection:hypersurfaces}

As in~\S\ref{subsection:polynomial}, let $S\colonequals A[\bsx]$ be a polynomial ring over a commutative ring~$A$, and identify 
\[
P_{S\,|\,A}\ =\ A[\bsx,\bsy]\ =\ S[\bsy].
\]
Let $R=S/(f(\bsx))$, where $f\in S$ is a nonzero polynomial, and identify $R\otimes_A R$ with
\[
P_{R\,|\,A}\ =\ \frac{A[\bsx,\bsy]}{(f(\bsx),\ f(\bsy))}\ =\ \frac{R[\bsy]}{(f(\bsy))}.
\]
Then $y_0-x_0,\ y_1-x_1,\ \dots,\ y_d-x_d$ serve as generators for $\Delta_{S|A}$, and their images in $P_{R|A}$ are generators for the ideal $\Delta_{R|A}$.

\begin{proposition}
One has $P_{S|A}$-module isomorphisms
\[
D^{[n]}_{R\,|\,A}\ \cong\ \ann\big(f(\bsy),\ D^{[n]}_{S\,|\,A}(R)\big)
\]
for each $n\ge0$, and
\[
D_{R\,|\,A}\ \cong\ \ann\big(f(\bsy),\ D_{S\,|\,A}(R)\big).
\]
\end{proposition}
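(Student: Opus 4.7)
The plan is to reduce $P^{[n]}_{R|A}$ to a quotient of $P^{[n]}_{S|A}$ and then apply Hom--tensor adjunction to the defining presentation, passing to direct limits at the end.

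First, I would record the presentation of $P^{[n]}_{R|A}$ inherited from $P_{R|A}=A[\bsx,\bsy]/(f(\bsx),f(\bsy))$. Since $y_0-x_0,\dots,y_d-x_d$ generate $\Delta_{R|A}$, as noted above, one has
\[
P^{[n]}_{R\,|\,A}\ =\ \frac{A[\bsx,\bsy]}{\big(f(\bsx),\ f(\bsy),\ (y_0-x_0)^{n+1},\,\dots,\,(y_d-x_d)^{n+1}\big)}\ =\ \frac{P^{[n]}_{S\,|\,A}}{\big(f(\bsx),\ f(\bsy)\big)P^{[n]}_{S\,|\,A}}.
\]
Because $P^{[n]}_{R|A}$ is regarded as an $R$-module via $r\mapsto r\otimes\!1$, the element $f(\bsx)$ acts as $0$, so $P^{[n]}_{S|A}/f(\bsx)P^{[n]}_{S|A}\cong P^{[n]}_{S|A}\otimes_S R$, and therefore one has the right-exact sequence of $R$-modules (indeed, of $P_{S|A}$-modules)
\[
\CD
P^{[n]}_{S\,|\,A}\otimes_S R @>f(\bsy)>> P^{[n]}_{S\,|\,A}\otimes_S R @>>> P^{[n]}_{R\,|\,A} @>>> 0.
\endCD
\]

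Next, I would apply the left-exact functor $\Hom_R(-,R)$ to this sequence, obtaining
\[
\CD
0 @>>> \Hom_R(P^{[n]}_{R\,|\,A},\,R) @>>> \Hom_R\big(P^{[n]}_{S\,|\,A}\otimes_S R,\,R\big) @>f(\bsy)>> \Hom_R\big(P^{[n]}_{S\,|\,A}\otimes_S R,\,R\big).
\endCD
\]
By the standard Hom--tensor adjunction, $\Hom_R(P^{[n]}_{S|A}\otimes_S R,\,R)\cong\Hom_S(P^{[n]}_{S|A},\,R)$, and by Proposition~\ref{proposition:iso:S:modules} this is identified, via $\rho^*$, with $D^{[n]}_{S|A}(R)$. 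The adjunction is natural in $P^{[n]}_{S|A}$, so it carries the action of $f(\bsy)$ on the middle (via the left module structure on $P^{[n]}_{S|A}$) to the action of $f(\bsy)$ on $D^{[n]}_{S|A}(R)$ through the $P_{S|A}$-module structure. Combining with $\rho^*\colon\Hom_R(P^{[n]}_{R|A},R)\xrightarrow{\sim}D^{[n]}_{R|A}$ of \S\ref{subsection:differential operators}, one obtains the asserted $P_{S|A}$-linear isomorphism
\[
D^{[n]}_{R\,|\,A}\ \cong\ \ann\big(f(\bsy),\ D^{[n]}_{S\,|\,A}(R)\big).
\]

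Finally, for the unfiltered statement, I would take the direct limit over $n$. The surjections $P^{[n+1]}_{R|A}\onto P^{[n]}_{R|A}$ induce the inclusions $D^{[n]}_{R|A}\into D^{[n+1]}_{R|A}$ whose union is $D_{R|A}$, and likewise $\varinjlim_n D^{[n]}_{S|A}(R)=D_{S|A}(R)$; since formation of $\ann(f(\bsy),-)$ commutes with filtered colimits of modules over a Noetherian-free setup (or simply because the transition maps are inclusions, so the kernels assemble), passing to the limit yields $D_{R|A}\cong\ann(f(\bsy),\,D_{S|A}(R))$. The only mild subtlety is tracking the $P_{S|A}$-linearity through adjunction and checking that the action of $f(\bsy)$ on the two sides matches; this is routine once one notes that $f(\bsy)=1\otimes f$ as an element of $P_{S|A}$ and acts on $\Hom_S(P^{[n]}_{S|A},R)$ by precomposing with multiplication by $f(\bsy)$ on $P^{[n]}_{S|A}$.
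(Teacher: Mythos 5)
Your proof is correct and follows essentially the same route as the paper: you exhibit $P^{[n]}_{R|A}$ as the cokernel of multiplication by $f(\bsy)$ on $P^{[n]}_{S|A}\otimes_S R$, apply $\Hom_R(-,R)$ and Hom--tensor adjunction to obtain the annihilator description, and pass to the direct limit. The extra detail you provide on deriving that presentation and on matching the $f(\bsy)$-action through adjunction is accurate and just fleshes out steps the paper states more tersely.
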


\begin{proof}
Taking the exact sequence
\[
\CD
P^{[n]}_{S\,|\,A}\otimes_S R @>f(\bsy)>> P^{[n]}_{S\,|\,A}\otimes_S R @>>> P^{[n]}_{R\,|\,A} @>>> 0,
\endCD
\]
and applying $\Hom_R(-,\,R)$, one obtains
\[
\CD
0 @>>> \Hom_R(P^{[n]}_{R\,|\,A},\ R) @>>> \Hom_R(P^{[n]}_{S\,|\,A}\otimes_S R,\ R) @>f(\bsy)>> \Hom_R(P^{[n]}_{S\,|\,A}\otimes_S R,\ R).
\endCD
\]
Since
\[
\Hom_R(P^{[n]}_{R\,|\,A},\ R)\ \cong\ D^{[n]}_{R\,|\,A}
\quad\text{and}\quad
\Hom_R(P^{[n]}_{S\,|\,A}\otimes_S R,\ R)\ \cong\ \Hom_S(P^{[n]}_{S\,|\,A},\ R)\ \cong\ D^{[n]}_{S\,|\,A}(R),
\]
the first isomorphisms follow. The second follows by taking the union over~$n$.
\end{proof}

In light of the previous proposition, we identify $D^{[n]}_{R|A}$ with $\ann\big(f(\bsy),\ D^{[n]}_{S|A}(R)\big)$; this identifies an $A$-linear differential operator $S\to R$ that is annihilated by $f(\bsy)$ with the induced factorization $R\to R$. Next, consider the sequence
\[
\CD
0 @>>> P_{S\,|\,A}\otimes_S R @>f(\bsy)>> P_{S\,|\,A}\otimes_S R @>>> P_{R\,|\,A} @>>> 0,
\endCD
\]
and the induced Koszul cohomology exact sequence
\[
\minCDarrowwidth18pt
\CD
0 @>>> H^d(\Delta^{[n+1]}_{R\,|\,A};\ P_{R\,|\,A}) @>\delta_{f(\bsy)}>> H^{d+1}(\Delta^{[n+1]}_{S\,|\,A};\ P_{S\,|\,A}\otimes_S R)
@>f(\bsy)>> H^{d+1}(\Delta^{[n+1]}_{S\,|\,A};\ P_{S\,|\,A}\otimes_S R),
\endCD
\]
where the injectivity on the left holds since the generators of the ideal $\Delta^{[n+1]}_{S|A}$ form a regular sequence on $P_{S|A}\otimes_S R$. We also use $\delta_{f(\bsy)}$ for the connecting map in the corresponding local cohomology exact sequence, i.e.,
\[
\CD
0 @>>> H^d_{\Delta_{R\,|\,A}}(P_{R\,|\,A}) @>\delta_{f(\bsy)}>> H^{d+1}_{\Delta_{S\,|\,A}}(P_{S\,|\,A}\otimes_S R) @>f(\bsy)>> H^{d+1}_{\Delta_{S\,|\,A}}(P_{S\,|\,A}\otimes_S R).
\endCD
\]

\begin{proposition}
\label{proposition:isomorphism:R}
The maps
\[
\minCDarrowwidth20pt
\CD
H^d(\Delta^{[n+1]}_{R\,|\,A};\ P_{R\,|\,A}) @>\delta_{f(\bsy)}>> H^{d+1}(\Delta^{[n+1]}_{S\,|\,A};\ P_{S\,|\,A}\otimes_S R) @>\gamma_n>> \Hom_S(P^{[n]}_{S\,|\,A},\ R) @>\rho^*>> D^{[n]}_{S\,|\,A}(R)
\endCD
\]
induce an isomorphism of $P_{R|A}$-modules
\[
\CD
H^d(\Delta^{[n+1]}_{R\,|\,A};\ P_{R\,|\,A}) @>\ \ \rho^*\,\circ\,\gamma_n\,\circ\,\delta_{f(\bsy)}\ \ >> D^{[n]}_{R\,|\,A},
\endCD
\]
with $D^{[n]}_{R|A}$ regarded as a submodule of $D^{[n]}_{S|A}(R)$. Passing to the direct limit, the map
\[
\CD
H^d_{\Delta_{R\,|\,A}}(P_{R\,|\,A}) @>\ \ \rho^*\,\circ\,\gamma\,\circ\,\delta_{f(\bsy)}\ \ >> D_{R\,|\,A},
\endCD
\]
is an isomorphism, with $D_{R|A}$ regarded as a submodule of $D_{S|A}(R)$.
\end{proposition}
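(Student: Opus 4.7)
The plan is to reduce everything to the exactness of the Koszul (and local) cohomology sequence coming from multiplication by $f(\bsy)$, and then invoke the $P_{S|A}$-module isomorphism of Proposition~\ref{proposition:iso:S:modules} to translate the kernel of that multiplication into $\ann(f(\bsy),\,D^{[n]}_{S|A}(R))$, which by the immediately preceding proposition is exactly $D^{[n]}_{R|A}$.

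More precisely, the first step is to note that the generators $y_0-x_0,\dots,y_d-x_d$ of $\Delta^{[n+1]}_{S|A}$ form a regular sequence on $P_{S|A}\otimes_S R = S[\bsy]\otimes_S R = R[\bsy]$, since they are a system of parameters in a polynomial extension. Consequently, $H^k(\Delta^{[n+1]}_{S|A};\,P_{S|A}\otimes_S R)$ vanishes for $k\le d$, so the long Koszul cohomology sequence induced by
\[
0\to P_{S\,|\,A}\otimes_S R \xrightarrow{f(\bsy)} P_{S\,|\,A}\otimes_S R \to P_{R\,|\,A}\to 0
\]
degenerates to the short exact sequence displayed in the excerpt; in particular $\delta_{f(\bsy)}$ identifies $H^d(\Delta^{[n+1]}_{R|A};\,P_{R|A})$ with the $f(\bsy)$-torsion submodule of $H^{d+1}(\Delta^{[n+1]}_{S|A};\,P_{S|A}\otimes_S R)$.

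The second step is to observe that $\delta_{f(\bsy)}$ is a map of $P_{R|A}$-modules, and that $\rho^*\circ\gamma_n$ is a $P_{S|A}$-linear isomorphism by Proposition~\ref{proposition:iso:S:modules}. Since any $P_{S|A}$-linear map carries the $f(\bsy)$-torsion submodule of the source onto the $f(\bsy)$-torsion submodule of the target, the composition $\rho^*\circ\gamma_n\circ\delta_{f(\bsy)}$ gives a $P_{R|A}$-module isomorphism between $H^d(\Delta^{[n+1]}_{R|A};\,P_{R|A})$ and
\[
\ann\big(f(\bsy),\ D^{[n]}_{S\,|\,A}(R)\big),
\]
which by the preceding proposition is precisely $D^{[n]}_{R|A}$ sitting inside $D^{[n]}_{S|A}(R)$.

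The last step is the passage to local cohomology. The three columns of the diagram in the proof of Proposition~\ref{proposition:iso:local:S} extend to the $R$-version in the obvious way: the canonical surjections $P^{[n+1]}_{R|A}\onto P^{[n]}_{R|A}$ induce the transition maps on $H^d$, which, under $\rho^*\circ\gamma_n\circ\delta_{f(\bsy)}$, correspond to the inclusions $D^{[n]}_{R|A}\into D^{[n+1]}_{R|A}$. Taking the direct limit and using that Koszul cohomology directed by the $\Delta^{[n+1]}_{R|A}$ computes $H^d_{\Delta_{R|A}}$, together with $D_{R|A}=\bigcup_n D^{[n]}_{R|A}$, gives the second claimed isomorphism. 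The only nontrivial point in this whole argument is the regular sequence observation that makes $\delta_{f(\bsy)}$ injective; once that is in place, everything else is formal from Proposition~\ref{proposition:iso:S:modules} and the characterization of $D^{[n]}_{R|A}$ as an annihilator.
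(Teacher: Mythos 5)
Your argument is correct and follows essentially the same route as the paper: use the regular sequence property of the generators of $\Delta^{[n+1]}_{S|A}$ on $P_{S|A}\otimes_S R$ to get injectivity of $\delta_{f(\bsy)}$ and thus identify its source with the $f(\bsy)$-torsion submodule of the target, then transport this across the $P_{S|A}$-linear isomorphism $\rho^*\circ\gamma_n$ of Proposition~\ref{proposition:iso:S:modules} and use the identification $D^{[n]}_{R|A}=\ann(f(\bsy),\,D^{[n]}_{S|A}(R))$ from the preceding unnumbered proposition; passage to local cohomology is then the usual direct limit. The only cosmetic imprecision is the phrase about the transition maps on $H^d$ being ``induced by the canonical surjections $P^{[n+1]}_{R|A}\onto P^{[n]}_{R|A}$'': the Koszul transition maps in degree $d$ are given by the comparison of Koszul complexes (multiplication by appropriate powers of the generators on the components), and it is only after the identifications via $\delta_{f(\bsy)}$ and $\gamma_n$ that they match the maps $\Hom_S(P^{[n]}_{S|A},R)\to\Hom_S(P^{[n+1]}_{S|A},R)$ coming from the surjections; this matches the commutative ladder in the proof of Proposition~\ref{proposition:iso:local:S}, so the conclusion stands.
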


\begin{proof}
By Proposition~\ref{proposition:iso:S:modules}, the maps $\gamma_n$ and $\rho^*$ are $P_{S|A}$-module isomorphisms. Also,
\[
\delta_{f(\bsy)}\colon H^d(\Delta^{[n+1]}_{R\,|\,A};\ P_{R\,|\,A})\ \to\ \ann\big(f(\bsy),\ H^{d+1}(\Delta^{[n+1]}_{S\,|\,A};\ P_{S\,|\,A}\otimes_S R)\big)
\]
is an isomorphism, and $D^{[n]}_{R|A}=\ann\big(f(\bsy),\ D^{[n]}_{S|A}(R)\big)$. This justifies the first isomorphism. The second is the familiar transition from Koszul cohomology to local cohomology.
\end{proof}

For $S\colonequals A[\bsx]$ a polynomial ring, Example~\ref{example:identity:operator} identifies the element of~$H^{d+1}_{\Delta_{S|A}}(P_{S|A})$ that corresponds to the identity map in $D_{S|A}$; for a hypersurface $R\colonequals A[\bsx]/(f(\bsx))$, we next identify the element of $H^d_{\Delta_{R|A}}(P_{R|A})$ that corresponds to the identity map in $D_{R|A}$:

\begin{example}
\label{example:identity:R}
Since $\mu(f(\bsy))=\mu(f(\bsx))$, where $\mu\colon P_{S|A}\to S$ is the $A$-algebra homomorphism determined by $x_i\mapsto x_i$, and $y_i\mapsto x_i$, one has
\[
f(\bsy)-f(\bsx)\ =\ \sum_{i=0}^d (y_i-x_i)g_i,
\]
where $g_i\in P_{S|A}$. It follows that $\sum_{i=0}^d (y_i-x_i)g_i=0$ in $P_{R|A}$. Thus, we obtain an element
\[
\eta_R\colonequals\left[\cdots,\ \frac{(-1)^ig_i}{\prod_{j\neq i}(y_j-x_j)},\cdots\ \right]\ \in\ H^d_{\Delta_{R\,|\,A}}(P_{R\,|\,A}),
\]
where the signs adhere to the convention in~\S\ref{subsection:koszul:local}. We claim that $\eta_R$ maps to the identity in~$D_{R|A}$ under the isomorphisms in Proposition~\ref{proposition:isomorphism:R}. First note that
\[
\delta_{f(\bsy)}(\eta_R)\ =\ \left[\frac{1}{(y_0-x_0) \cdots (y_d-x_d)}\right].
\]
Using Example~\ref{example:identity:operator}, $\rho^*\circ\gamma$ maps $\delta_{f(\bsy)}(\eta_R)$ to the element of $\Hom_A(S,R)$ corresponding to the canonical surjection $S\onto R$. Thus, $\rho^*\circ\gamma\circ\delta_{f(\bsy)}(\eta_R)$ is the identity map on $R$.
\end{example}

\subsection{Equivalence as \texorpdfstring{$D$}{D}-modules}
\label{subsection:equivalence}

Let $A$ be a ring. Given $A$-algebras $T$ and $T'$, one has a homomorphism $D_{T|A}\to D_{(T\otimes_A T')|A}$ given by
\[
\CD
D_{T\,|\,A} @>\mathrm{id}\otimes1>> D_{T\,|\,A} \otimes_A T' @>>> D_{(T\otimes_A T')\,|\,T'}\ \subseteq\ D_{(T\otimes_A T')\,|\,A}.
\endCD
\]
For an ideal $I$ of $T\otimes_A T'$, we regard the local cohomology module $H^k_I(T\otimes_A T')$ as a $D_{T|A}$-module by restriction of scalars along the map above.

\begin{theorem}
\label{theorem:d:isomorphism:polynomial}
Let $A$ be a commutative ring, and let $S\colonequals A[\bsx]$ be the polynomial ring over~$A$ in the indeterminates $\bsx\colonequals x_0,\dots,x_d$. Then the map
\[
\CD
H^{d+1}_{\Delta_{S\,|\,A}}(P_{S\,|\,A}) @>{\ \ \rho^*\,\circ\,\gamma\ \ }>> D_{S\,|\,A},
\endCD
\]
as in Proposition~\ref{proposition:iso:local:S}, is an isomorphism of $D_{S|A}$-modules.
\end{theorem}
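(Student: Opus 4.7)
The plan is to upgrade the $P_{S|A}$-module isomorphism $\phi\colonequals\rho^*\circ\gamma$ of Proposition~\ref{proposition:iso:local:S} to an isomorphism of $D_{S|A}$-modules. The $D_{S|A}$-action on the source arises from the ring homomorphism $D_{S|A}\to D_{P_{S|A}|A}$ of the preamble, $\delta\mapsto\delta\otimes 1$ (i.e., $\delta$ acting on the first tensor factor, identity on the second), while $D_{S|A}$ acts on itself by left composition. Since $\phi$ is already $P_{S|A}$-linear, the task reduces to verifying that
\[
\phi\bigl((\delta\otimes 1)\cdot\xi\bigr)\ =\ \delta\circ\phi(\xi)
\]
for all $\delta\in D_{S|A}$ and $\xi\in H^{d+1}_{\Delta_{S|A}}(P_{S|A})$.

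The crux of the proof is a single computation at the ``identity element'' $\eta_S$ of Example~\ref{example:identity:operator}: for every multi-index $\boldsymbol{a}=(a_0,\dots,a_d)$, I claim that
\[
(\partial_{a_0,\dots,a_d}\otimes 1)\cdot\eta_S\ =\ \left[\frac{1}{(y_0-x_0)^{a_0+1}\cdots(y_d-x_d)^{a_d+1}}\right]
\]
in $H^{d+1}_{\Delta_{S|A}}(P_{S|A})$. I would establish this via the closed-form action formula~\eqref{equation:localization2} applied one variable at a time. The heart of the single-variable case is the commutator identity $[\partial_a,\,y-x]=-\partial_{a-1}$ (with the convention $\partial_{-1}=0$), which holds in any characteristic by a direct check on the monomial basis $\{x^n\}$ via Pascal's rule. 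Iterating gives $\partial_a^{(k)}=(-1)^k\partial_{a-k}$, so in~\eqref{equation:localization2} only the $k=a$ term survives, producing $1/(y-x)^{a+1}$. The multivariable case follows immediately, since $\partial_{a_i}\otimes 1$ commutes with $y_j-x_j$ for $j\neq i$. Combining this with Example~\ref{example:identity:operator} yields $\phi((\partial_{a_0,\dots,a_d}\otimes 1)\cdot\eta_S)=\partial_{a_0,\dots,a_d}$ for every $\boldsymbol{a}$.

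Writing a general $\delta\in D_{S|A}$ as an $S$-linear combination $\sum r_{\boldsymbol{a}}\partial_{a_0,\dots,a_d}$, the factorization $\delta\otimes 1=\sum(r_{\boldsymbol{a}}\otimes 1)(\partial_{a_0,\dots,a_d}\otimes 1)$ in $D_{P_{S|A}|A}$ (valid since $D_{S|A}\to D_{P_{S|A}|A}$ is a ring homomorphism), combined with the $P_{S|A}$-linearity of $\phi$ applied to the factors $r_{\boldsymbol{a}}\otimes 1\in P_{S|A}$, upgrades the previous identity to $\phi((\delta\otimes 1)\cdot\eta_S)=\delta$ for every $\delta\in D_{S|A}$. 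Finally, bijectivity of $\phi$ forces every $\xi$ in the source to have the form $(\delta'\otimes 1)\cdot\eta_S$ with $\delta'\colonequals\phi(\xi)$, since both sides have the same image under the injection $\phi$. Then for any $\delta\in D_{S|A}$,
\[
\phi\bigl((\delta\otimes 1)\cdot\xi\bigr)\ =\ \phi\bigl((\delta\delta'\otimes 1)\cdot\eta_S\bigr)\ =\ \delta\circ\delta'\ =\ \delta\circ\phi(\xi),
\]
establishing the $D_{S|A}$-linearity. The main obstacle is the initial calculation of $(\partial_{a_0,\dots,a_d}\otimes 1)\cdot\eta_S$: it must be carried out via the divided-power operators $\partial_a$ rather than $\frac{1}{a!}\partial^a/\partial x^a$ in order to be uniform in all characteristics, and the iterated signs in $\partial_a^{(k)}=(-1)^k\partial_{a-k}$ must cancel correctly against the $(-1)^k$ in~\eqref{equation:localization2}.
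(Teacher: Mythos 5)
Your proof is correct, and it takes a mildly different route from the paper's. The paper verifies $D_{S|A}$-linearity of $\phi = \rho^*\circ\gamma$ by computing the action of $\partial_b$ on the ``basis'' elements $\bigl[1/(y-x)^{a+1}\bigr]$ directly, obtaining $\bigl[\binom{a+b}{b}/(y-x)^{a+b+1}\bigr]$, and then matching against the identity $\partial_b\partial_a=\binom{a+b}{b}\partial_{a+b}$ in $D_{S|A}$. You instead carry out a single computation at $\eta_S$ (showing $(\partial_{\bm a}\otimes 1)\cdot\eta_S=\bigl[1/\prod(y_i-x_i)^{a_i+1}\bigr]$ via the commutator identity $[\partial_a,\,y-x]=-\partial_{a-1}$ and the closed form~\eqref{equation:localization2}), then exploit the cyclic structure: bijectivity of $\phi$ together with the ring-homomorphism property of $\delta\mapsto\delta\otimes 1$ forces every $\xi$ to lie in the $D_{S|A}$-orbit of $\eta_S$, and linearity follows formally. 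The computation you do (the case $a=0$ in the paper's) is slightly less, and you avoid invoking the binomial identity among divided powers, trading it for the Pascal-rule commutator identity and the telescoping argument in~\eqref{equation:localization2}; the two verifications are of comparable difficulty. One small bonus of your route is that it also proves, in passing, that $\eta_S$ is a free generator of $H^{d+1}_{\Delta_{S|A}}(P_{S|A})$ as a $D_{S|A}$-module, which is the template for Theorem~\ref{theorem:cyclic}.
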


\begin{proof}
By Proposition~\ref{proposition:iso:local:S}, the displayed isomorphism is $P_{S|A}$-linear; we need only verify that it is $D_{S|A}$-linear. In view of Example~\ref{example:identity:operator}, it suffices to verify that the map
\[
\left[\frac{1}{(y_0-x_0)^{a_0+1}\cdots(y_d-x_d)^{a_d+1}}\right]\ \mapsto\ \partial_{a_0,\dots,a_d}
\]
is $D_{S|A}$-linear. In the case $S=A[x]$, i.e., where $d=0$, the verification takes the form
\[
\partial_b\left[\frac{1}{(y-x)^{a+1}}\right]\ =\ \left[\frac{\binom{a+b}{b}}{(y-x)^{a+b+1}}\right]
\ \mapsto\ \binom{a+b}{b}\partial_{a+b}\ =\ \partial_b\partial_a,
\]
with the general case being similar.
\end{proof}

\begin{theorem}
\label{theorem:cyclic:proved}
Let $A$ be a commutative ring, and let $S\colonequals A[\bsx]$ be the polynomial ring over~$A$ where $\bsx\colonequals x_0,\dots,x_d$. Let $R=S/fS$, for $f\in S$ a nonzero polynomial. Then the map
\[
\CD
H^d_{\Delta_{R\,|\,A}}(P_{R\,|\,A}) @>\ \ \rho^*\,\circ\,\gamma\,\circ\,\delta_{f(\bsy)}\ \ >> D_{R\,|\,A},
\endCD
\]
as in Proposition~\ref{proposition:isomorphism:R}, is an isomorphism of $D_{R|A}$-modules.
\end{theorem}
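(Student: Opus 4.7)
By Proposition~\ref{proposition:isomorphism:R} the map $\psi_R \colonequals \rho^*\circ\gamma\circ\delta_{f(\bsy)}$ is already a $P_{R|A}$-module isomorphism; the task is to upgrade this to $D_{R|A}$-linearity. The plan is to factor $\psi_R = \psi'\circ\delta_{f(\bsy)}$, where
\[
\psi'\colonequals\rho^*\circ\gamma\colon H^{d+1}_{\Delta_{S|A}}(P_{S|A}\otimes_S R)\to D_{S|A}(R)
\]
is the isomorphism of Proposition~\ref{proposition:iso:S:modules} applied with $M=R$, and to show that each of the two factors is $D_{R|A}$-linear.

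For $\delta_{f(\bsy)}$, the key observation is that
\[
0\to P_{S|A}\otimes_S R\xrightarrow{f(\bsy)} P_{S|A}\otimes_S R\to P_{R|A}\to 0
\]
is a short exact sequence of $D_{R|A}$-modules. Indeed, under the identification $P_{S|A}\otimes_S R=R[\bsy]$, the element $f(\bsy)\in A[\bsy]$ lies entirely in the right tensor factor, while $D_{R|A}$ acts on the left factor via $\delta\mapsto\delta\otimes 1$; hence multiplication by $f(\bsy)$ commutes with the $D_{R|A}$-action. The associated long exact sequence, and in particular the connecting map $\delta_{f(\bsy)}$, is therefore $D_{R|A}$-linear.

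For $\psi'$, I would begin from the $D_{S|A}$-linear isomorphism $\psi_S\colon H^{d+1}_{\Delta_{S|A}}(P_{S|A})\xrightarrow{\sim} D_{S|A}$ of Theorem~\ref{theorem:d:isomorphism:polynomial} and descend modulo the action of $f(\bsx)=f\otimes 1$. Because $\psi_S$ is $P_{S|A}$-linear and $f(\bsx)$ acts on $D_{S|A}$ by left composition with $\tilde f$, the respective cokernels are $H^{d+1}_{\Delta_{S|A}}(P_{S|A}\otimes_S R)$ (via the long exact sequence of $0\to P_{S|A}\xrightarrow{f(\bsx)} P_{S|A}\to P_{S|A}\otimes_S R\to 0$, together with the vanishing of $H^{d+2}_{\Delta_{S|A}}$) and $D_{S|A}/fD_{S|A}\cong D_{S|A}(R)$, so $\psi_S$ descends to the $P_{S|A}$-linear map $\psi'$. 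To promote $\psi'$ to $D_{R|A}$-linearity, I would lift each $\delta\in D_{R|A}$ to some $\tilde\delta\in D_{S|A}$ with $\tilde\delta(fS)\subseteq fS$, observing that this lifting property is equivalent to $[\tilde\delta,\tilde f]\in fD_{S|A}$ since any $A$-linear differential operator $S\to fS$ factors through $\tilde f$ (by a short induction on order, using that $f$ is a nonzerodivisor). The commutator identity
\[
(\tilde\delta\otimes 1)(f(\bsx)\eta) = f(\bsx)(\tilde\delta\otimes 1)(\eta) + ([\tilde\delta,\tilde f]\otimes 1)(\eta)
\]
then shows that $\tilde\delta\otimes 1$ preserves $f(\bsx)\,H^{d+1}_{\Delta_{S|A}}(P_{S|A})$, and a parallel identity on the $D_{S|A}$ side shows that left composition by $\tilde\delta$ preserves $fD_{S|A}$. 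By the $D_{S|A}$-linearity of $\psi_S$, the two descended actions agree under $\psi'$, are independent of the chosen lift, and coincide with the intrinsic $D_{R|A}$-actions; thus $\psi'$ is $D_{R|A}$-linear.

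The main obstacle lies in this last descent step. Because the $D_{S|A}$- and $P_{S|A}$-actions on $H^{d+1}_{\Delta_{S|A}}(P_{S|A})$ do not commute, the submodule $f(\bsx)\,H^{d+1}_{\Delta_{S|A}}(P_{S|A})$ is not a $D_{S|A}$-submodule, and one cannot simply quotient to obtain a $D_{S|A}$-linear statement. Restricting to lifts $\tilde\delta$ that preserve $fS$ is precisely what makes the descended $D_{R|A}$-action well-defined on both sides. Once both factors $\delta_{f(\bsy)}$ and $\psi'$ have been shown to be $D_{R|A}$-linear, the composition $\psi_R$ is a $D_{R|A}$-module isomorphism.
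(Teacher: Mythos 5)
Your proof is correct, and it reaches the same conclusion by a somewhat different (more hands-on) route than the paper. The paper applies the base-change functor $D_{S|A}(R)\otimes_{D_{S|A}}(-)$ to the $D_{S|A}$-linear isomorphism $\psi_S\colon H^{d+1}_{\Delta_{S|A}}(P_{S|A})\to D_{S|A}$ of Theorem~\ref{theorem:d:isomorphism:polynomial}, so that $D_{R|A}$-linearity of the resulting map is automatic, and then invokes Lemma~\ref{lemma:bimodule} to identify that abstractly-constructed map with the concrete map $\psi'$ obtained from $-\otimes_S R$; finally it restricts to the kernels of $f(\bsy)$ via a commutative diagram. You instead bypass the functor and show directly that $\psi'$ descends $\psi_S$ $D_{R|A}$-equivariantly: the crucial step is restricting to lifts $\tilde\delta\in D_{S|A}$ of a given $\delta\in D_{R|A}$ that preserve $fS$, equivalently $[\tilde\delta,\tilde f]\in fD_{S|A}$, and using the commutator identity to see that $\tilde\delta\otimes 1$ preserves $f(\bsx)H^{d+1}_{\Delta_{S|A}}(P_{S|A})$ and that $\tilde\delta\circ(-)$ preserves $fD_{S|A}$. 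This is in substance the same compatibility that is verified inside the paper's proof of Lemma~\ref{lemma:bimodule}, but you use it directly rather than through the bimodule formalism, and your treatment of the $D_{R|A}$-linearity of $\delta_{f(\bsy)}$ (as the connecting map of a short exact sequence of $D_{R|A}$-modules, since $f(\bsy)$ lives in the second tensor factor) is more explicit than the paper's. Two small points worth recording in a final write-up: the existence of a lift $\tilde\delta$ preserving $fS$ is automatic, since the surjection $D_{S|A}\onto D_{S|A}(R)\cong D_{S|A}/fD_{S|A}$ lets you lift $\delta\circ\pi$, and any such lift then carries $fS$ into $fS$ because $\delta\circ\pi$ annihilates $fS$; and well-definedness of the descended action follows because two lifts differ by an element of $fD_{S|A}$, whose action on $H^{d+1}_{\Delta_{S|A}}(P_{S|A})$ lands inside $f(\bsx)H^{d+1}_{\Delta_{S|A}}(P_{S|A})$.
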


\begin{proof}
We consider $H^{d+1}_{\Delta_{S|A}}(P_{S|A}\otimes_S R)\cong H^{d+1}_{\Delta_{S|A}}(R\otimes_A S)$ as a $D_{R|A}$-module as described at the beginning of this subsection. 
	
Let $\rho^*\circ\gamma\colon H^{d+1}_{\Delta_{S|A}}(P_{S|A}) \to D_{S|A}$ be the $D_{S|A}$-module isomorphism from Theorem~\ref{theorem:d:isomorphism:polynomial}. Applying $D_{S|A}(R)\otimes_{D_{S|A}}(-)$ to this map, we obtain a $D_{R|A}$-linear isomorphism
\[
D_{S\,|\,A}(R)\otimes_{D_{S\,|\,A}} H^{d+1}_{\Delta_{S\,|\,A}}(P_{S\,|\,A}) \to D_{S\,|\,A}(R) \otimes_{D_{S\,|\,A}} D_{S\,|\,A}.
\]
We claim that this map identifies with the isomorphism
\[
H^{d+1}_{\Delta_{S\,|\,A}}(P_{S\,|\,A}\otimes_S R) \to D_{S\,|\,A}(R)
\]
obtained by applying $-\otimes_S R$, as in Proposition~\ref{proposition:iso:S:modules}. Indeed, it is easy to see that
\[
D_{S\,|\,A}(R) \otimes_{D_{S\,|\,A}} H^{d+1}_{\Delta_{S\,|\,A}}(P_{S\,|\,A}) \cong
D_{(S\otimes_A S)\,|\,(A\otimes_A S)}(R\otimes_A S) \otimes_{D_{(S\otimes_A S)\,|\,(A\otimes_A S)}} H^{d+1}_{\Delta_{S\,|\,A}}(P_{S\,|\,A}),
\]
and the latter identifies with $H^{d+1}_{\Delta_{S|A}}(P_{S|A}\otimes_S R)$ by Lemma~\ref{lemma:bimodule}.

We have the commutative diagram 
\[
\CD
0 @>>> H^d_{\Delta_{R\,|\,A}}(P_{R\,|\,A}) @>\delta_{f(\bsy)}>> H^{d+1}_{\Delta_{S\,|\,A}}(P_{S\,|\,A}\otimes_S R) @>{f(\bsy)}>> H^{d+1}_{\Delta_{S\,|\,A}}(P_{S\,|\,A}\otimes_S R)\\
 @. @VV{\cong}V @VV{\cong}V @VV{\cong}V\\
0 @>>> D_{R\,|\,A} @>>> D_{S\,|\,A}(R) @>{f(\bsy)}>> D_{S\,|\,A}(R),
\endCD
\]
where all of maps in the rightmost square are $D_{R|A}$-linear; the first vertical isomorphism is the map from Proposition~\ref{proposition:isomorphism:R}. It follows that this map is a $D_{R|A}$-linear isomorphism. 
\end{proof}

\begin{example}
\label{example:euler}
Let $S\colonequals A[x_0,\dots,x_d]$ be a polynomial ring over a commutative ring~$A$. Fix the $\NN$-grading on $S$ where $S_0=A$ and $\deg x_i=1$ for each $i$. Let $R\colonequals S/(f(\bsx))$, for $f\in S$ a nonzero homogeneous polynomial. We explicitly describe the element of $H^d_{\Delta_{R|A}}(P_{R|A})$ that corresponds to the Euler operator
\[
E\colonequals\sum_{i=0}^d x_i\frac{\partial}{\partial x_i}\ \in\ D_{R\,|\,A}.
\]
With the notation as in Example~\ref{example:identity:R}, let $f(\bsy)-f(\bsx)=\sum_{i=0}^d(y_i-x_i)g_i$ with $g_i\in P_{S|A}$, in which case~$\sum_{i=0}^d(y_i-x_i)g_i=0$ in $P_{R|A}$. Under the isomorphism of the previous theorem, the cohomology class $\eta_R\in H^d_{\Delta_{R|A}}(P_{R|A})$ of the \v Cech cocycle
\[
\left(
\frac{g_0}{\prod_{j\neq 0}(y_j-x_j)},\ \frac{-g_1}{\prod_{j\neq 1}(y_j-x_j)},\ \cdots,\ \frac{(-1)^dg_d}{\prod_{j\neq d}(y_j-x_j)}
\right)
\]
corresponds to the identity element in~$D_{R|A}$. As the isomorphism is one of $D_{R|A}$ modules, the element of $H^d_{\Delta_{R|A}}(P_{R|A})$ that corresponds to the Euler operator $E$ is the cohomology class of the element obtained by applying $E$, considered as an operator on the $\bsx$ variables, componentwise to the above cocycle.
\end{example}

\begin{example}
Consider $R\colonequals A[x]/(x^n)$, for $n$ a positive integer. Then
\[
H^0_{\Delta_{R\,|\,A}}(P_{R\,|\,A})\ =\ P_{R\,|\,A}\ =\ A[x,y]/(x^n,y^n).
\]
In $P_{S|A}$ one has $y^n-x^n=(y-x)\sum_{k=0}^{n-1} y^{n-1-k}x^k$, so the element of~$H^0_{\Delta_{R|A}}(P_{R|A})$ that corresponds to the identity in $D_{R|A}$ is
\[
\sum_{k=0}^{n-1}y^{n-1-k}x^k,
\]
while the element corresponding to the Euler operator is
\[
E\Big(\sum_{k=0}^{n-1}y^{n-1-k}x^k\Big)\ =\ \sum_{k=1}^{n-1}ky^{n-1-k}x^k.
\]
\end{example}

\begin{example}
Set $R\colonequals A[x_0,x_1]/(x_0^2+x_1^2)$. Then the identity in $D_{R|A}$ corresponds to
\[
\left[\frac{y_0+x_0}{y_1-x_1},\ \frac{-(y_1+x_1)}{y_0-x_0}\right]\ \in\ H^1_{\Delta_{R\,|\,A}}(P_{R\,|\,A}),
\]
and the Euler operator to
\[
\left[\frac{y_1x_0+y_0x_1}{(y_1-x_1)^2},\ \frac{-(y_1x_0+y_0x_1)}{(y_0-x_0)^2}\right]\ \in\ H^1_{\Delta_{R\,|\,A}}(P_{R\,|\,A}).
\]
\end{example}

\subsection{Frobenius trace}
\label{subsection:frobenius:trace}

Suppose $A$ is a field of characteristic $p>0$. Let $S\colonequals A[x_0,\dots,x_d]$ be a polynomial ring, and $R\colonequals S/(f(\bsx))$ a graded hypersurface. Let $e$ be a positive integer. The rings $R$ and $R^{p^e}$ are Gorenstein, so the graded analogue of local duality implies that~$\Hom_{R^{p^e}}(R,R^{p^e})$ is a cyclic~$R$-module. To specify a generator, first set
\[
\Phi^e_S\colonequals\partial_{p^e-1,\dots,p^e-1}.
\]
It is a key point that $\Phi^e_S\colon S\to S^{p^e}$, and that it is $S^{p^e}$-linear. Next, consider the composition
\[
\CD
S @>f^{p^e-1}>> S @>\Phi^e_S>> S^{p^e} @>\pi>> R^{p^e}
\endCD
\]
where $\pi$ is the canonical surjection. Since $fS$ is contained in its kernel, the composition factors through a map
\[
\Phi^e_R\colon R\to R^{p^e},
\]
that we define to be the \emph{$e$-th Frobenius trace} of $R$. When $e=1$, we refer to $\Phi_R\colon R\to R^p$ as the Frobenius trace map. In general, the $e$-th Frobenius trace~$\Phi^e_R$ is an element of $D^{[p^e]}_{R|A}$, and is an $R$-module generator for $\Hom_{R^{p^e}}(R,R^{p^e})$.

We claim that $\Phi^e_R$, viewed as a differential operator in $D_{R|A}$, corresponds to the image of the element $\eta_R$ from Example~\ref{example:identity:R} under the $e$-th iterate of the Frobenius action $F$ on the local cohomology module $H^d_{\Delta_{R|A}}(P_{R|A})$, i.e.,
\[
(\rho^*\circ\gamma\circ\delta_{f(\bsy)})(F^e(\eta_R))\ =\ \Phi^e_R.
\]
To see this, consider
\[
\eta_S\colonequals\left[\frac{1}{(y_0-x_0)\cdots(y_d-x_d)}\right]\ \in\ H^{d+1}_{\Delta_{S\,|\,A}}(P_{S\,|\,A})
\]
as in Example~\ref{example:identity:operator}, and note that
\[
F^e(\eta_S)\ =\ \left[\frac{1}{(y_0-x_0)^{p^e}\cdots(y_d-x_d)^{p^e}}\right].
\]
By Example~\ref{example:identity:operator},
\[
(\rho^*\circ\gamma)(F^e(\eta_S))\ =\ \partial_{p^e-1,\dots,p^e-1},
\]
which equals $\Phi^e_S$. Likewise, if $\bar{\eta}_S$ is the image of $\eta_S$ in $H^{d+1}_{\Delta_{S\,|\,A}}(P_{S\,|\,A}\otimes_S R)$, then
\[
(\rho^*\circ\gamma)(F^e(\bar{\eta}_S))\ =\ \pi\circ\Phi^e_S.
\]
With $\eta_R$ as in Example~\ref{example:identity:R}, one has
\[
F^e(\eta_R)\ =\ \left[\cdots,\ \frac{(-1)^i g_i^{p^e}}{\prod_{j\neq i}(y_j-x_j)^{p^e}},\ \cdots\right] \in H^d_{\Delta_{R\,|\,A}}(P_{R\,|\,A}),
\]
so
\[
\delta_{f(\bsy)}(F^e(\eta_R))\ =\ \left[\frac{f(\bsy)^{p^e-1}}{\prod_{i=0}^d (y_i-x_i)^{p^e}}\right]\ =\ f(\bsy)^{p^e-1} F^e(\bar{\eta_S})
\]
in $H^{d+1}_{\Delta_{S|A}}(P_{S|A}\otimes_S R)$. Thus, since $\rho^{*}\circ\gamma$ is $P_{S|A}$-linear, one has
\[
(\rho^{*}\circ\gamma\circ\delta_{f(\bsy)})(F^e(\eta_R))\ =\ f(\bsy)^{p^e-1} \pi\circ\Phi^e_S,
\]
which coincides with $\Phi^e_R$ in $D_{R|A}$, regarded as a submodule of $D_{S|A}(R)$.

\subsection{Lifting differential operators modulo \texorpdfstring{$p$}{p}}
\label{subsection:liftability}

Let $A$, $S$, and $R$ be as in \S\ref{subsection:hypersurfaces}, i.e., $A$ is a commutative ring, $S\colonequals A[x_0,\dots,x_d]$ is a polynomial ring, and $R\colonequals S/(f(\bsx))$ is a hypersurface. Suppose $p>0$ is a prime integer that is regular on $P_{R|A}$. The exact sequence
\[
\CD
0 @>>> P_{R\,|\,A} @>p>> P_{R\,|\,A} @>>> P_{(R/pR)\,|\,(A/pA)} @>>> 0
\endCD
\]
induces a cohomology exact sequence
\[
\minCDarrowwidth20pt
\CD
@>>> H^d_{\Delta_{R\,|\,A}}(P_{{R\,|\,A}}) @>>> H^d_{\Delta_{R\,|\,A}}(P_{(R/pR)\,|\,(A/pA)}) @>\delta_p>> H^{d+1}_{\Delta_{R\,|\,A}}(P_{R\,|\,A}) @>p>> H^{d+1}_{\Delta_{R\,|\,A}}( P_{R\,|\,A})
\endCD
\]
with $\delta_p$ denoting the connecting homomorphism. In particular, $\delta_p$ induces an isomorphism
\[
\CD
\coker\Big(H^d_{\Delta_{R\,|\,A}}(P_{{R\,|\,A}}) @>>> H^d_{\Delta_{R\,|\,A}}(P_{(R/pR)\,|\,(A/pA)})\Big) @>>> \ann\big(p,\ H^{d+1}_{\Delta_{R\,|\,A}}(P_{R\,|\,A})\big).
\endCD
\]
Acknowledging the abuse of notation, we denote the inverse of this isomorphism by $\delta_p^{-1}$, so as to obtain:

\begin{proposition}
\label{proposition:lift:to:Z}
There is an isomorphism of $P_{R|A}$-modules
\[
\CD
\ann\big(p,\ H^{d+1}_{\Delta_{R\,|\,A}}(P_{R\,|\,A})\big) @>>> \coker\Big(D_{R\,|\,A} @>>> D_{(R/pR)\,|\,(A/pA)}\Big),
\endCD
\]
given by $\rho^*\circ\gamma\circ\delta_{f(\bsy)}\circ\delta_p^{-1}$. In particular, given an element 
\[
\nu\ \in\ H^d_{\Delta_{R\,|\,A}}( P_{(R/pR)\,|\,(A/pA)}),
\]
the corresponding differential operator $(\rho^*\circ\gamma\circ\delta_{f(\bsy)})(\nu)\in D_{(R/pR)|(A/pA)}$ lifts to an element of $D_{R|A}$ if and only if $\delta_p(\nu)= 0$.
\end{proposition}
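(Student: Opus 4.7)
The plan is to apply Proposition~\ref{proposition:isomorphism:R} both to $R$ over $A$ and to $R/pR$ over $A/pA$, then compare the two settings via the long exact sequence in local cohomology induced by the short exact sequence
\[
0\to P_{R\,|\,A}\xrightarrow{p}P_{R\,|\,A}\to P_{(R/pR)\,|\,(A/pA)}\to 0.
\]
This sequence is valid because $p$ is regular on $P_{R|A}$ by hypothesis, and because $P_{(R/pR)|(A/pA)}=(R/pR)\otimes_{A/pA}(R/pR)$ coincides with $P_{R|A}/pP_{R|A}$; moreover, the ideal $\Delta_{(R/pR)|(A/pA)}$ is the reduction of $\Delta_{R|A}$, so the local cohomology supports agree.

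The first concrete step is to establish the commutative square
\[
\CD
H^d_{\Delta_{R\,|\,A}}(P_{R\,|\,A}) @>\pi>> H^d_{\Delta_{R\,|\,A}}(P_{(R/pR)\,|\,(A/pA)})\\
@VVV @VVV\\
D_{R\,|\,A} @>>> D_{(R/pR)\,|\,(A/pA)},
\endCD
\]
in which the vertical arrows are the isomorphisms of Proposition~\ref{proposition:isomorphism:R} applied in the two respective settings, the top horizontal map $\pi$ is induced by the reduction $P_{R|A}\to P_{R|A}/pP_{R|A}$, and the bottom horizontal map sends each $A$-linear differential operator on $R$ to the $(A/pA)$-linear operator it induces on $R/pR$. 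Commutativity is verified by chasing the constituent maps $\delta_{f(\bsy)}$, $\gamma$, and $\rho^*$ in turn: each is defined by the same explicit formulas before and after reduction modulo $p$, and hence is natural in $A$.

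Taking cokernels along the rows then yields an isomorphism
\[
\coker(\pi)\ \cong\ \coker\bigl(D_{R\,|\,A}\to D_{(R/pR)\,|\,(A/pA)}\bigr).
\]
On the other hand, the long exact sequence associated to the short exact sequence above identifies $\coker(\pi)$ with $\ann\bigl(p,\,H^{d+1}_{\Delta_{R\,|\,A}}(P_{R\,|\,A})\bigr)$ via the connecting map $\delta_p$; inverting this identification is the abuse of notation $\delta_p^{-1}$, and composing yields the asserted isomorphism $\rho^*\circ\gamma\circ\delta_{f(\bsy)}\circ\delta_p^{-1}$. The ``in particular'' statement then follows: given $\nu\in H^d_{\Delta_{R|A}}(P_{(R/pR)|(A/pA)})$, the operator $(\rho^*\circ\gamma\circ\delta_{f(\bsy)})(\nu)$ lifts to $D_{R|A}$ precisely when its class in $\coker(D_{R|A}\to D_{(R/pR)|(A/pA)})$ vanishes, which via the cokernel identification is equivalent to $\delta_p(\nu)=0$.

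The main obstacle is the commutativity of the diagram in the first step. Each of $\rho^*$, $\gamma$, and $\delta_{f(\bsy)}$ is a natural construction, but verifying compatibility requires tracing the explicit definitions from \S\ref{subsection:polynomial} and \S\ref{subsection:hypersurfaces}---in particular, that the Koszul generators $y_i-x_i$, the dual-basis isomorphism $\gamma$, and the cocycle producing $\delta_{f(\bsy)}$ all descend from $P_{R|A}$ to $P_{(R/pR)|(A/pA)}$ without alteration.
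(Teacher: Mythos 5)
Your proposal is correct and follows essentially the same route as the paper: form the commutative square whose vertical arrows are the isomorphisms $\rho^*\circ\gamma\circ\delta_{f(\bsy)}$ of Proposition~\ref{proposition:isomorphism:R}, identify the cokernel of the top row with $\ann\bigl(p,\ H^{d+1}_{\Delta_{R|A}}(P_{R|A})\bigr)$ via the connecting map $\delta_p$, and conclude. Your added remarks on why each of $\delta_{f(\bsy)}$, $\gamma$, $\rho^*$ commutes with reduction modulo $p$ merely spell out a naturality the paper leaves implicit.
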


\begin{proof}
One has a commutative diagram
\[
\CD
H^d_{\Delta_{R\,|\,A}}(P_{R\,|\,A}) @>>> H^d_{\Delta_{R\,|\,A}}(P_{(R/pR)\,|\,(A/pA)})\\
@VVV @VVV\\
D_{R\,|\,A} @>>> D_{(R/pR)\,|\,(A/pA)}
\endCD
\]
where the vertical maps are the isomorphisms $\rho^*\circ\gamma\circ\delta_{f(\bsy)}$ of Proposition~\ref{proposition:isomorphism:R}. Combining with the isomorphism
\[
\CD
\ann\big(p,\ H^{d+1}_{\Delta_{R\,|\,A}}(P_{R\,|\,A})\big) @>\delta_p^{-1}>> \coker\Big(H^d_{\Delta_{R\,|\,A}}(P_{R\,|\,A}) @>>> H^d_{\Delta_{R\,|\,A}}(P_{(R/pR)\,|\,(A/pA)})\Big),
\endCD
\]
the assertion follows.
\end{proof}

Next, we establish a similar criterion for when a differential operator $D_{(R/pR)|(A/pA)}$ lifts to $D_{(R/p^2R)|(A/p^2A)}$. Start with the exact sequence
\[
\CD
0 @>>> P_{(R/pR)\,|\,(A/pA)} @>p>> P_{(R/p^2R)\,|\,(A/p^2A)} @>>> P_{(R/pR)\,|\,(A/pA)} @>>> 0
\endCD
\]
and the corresponding cohomology exact sequence
\begin{multline*}
\minCDarrowwidth20pt
\CD
@>>> H^d_{\Delta_{R\,|\,A}}(P_{(R/p^2R)\,|\,(A/p^2A)}) @>>> H^d_{\Delta_{R\,|\,A}}(P_{(R/pR)\,|\,(A/pA)})
\endCD \\
\minCDarrowwidth20pt
\CD
@>\beta_p>> H^{d+1}_{\Delta_{R\,|\,A}}(P_{(R/pR)\,|\,(A/pA)}) @>p>> H^{d+1}_{\Delta_{R\,|\,A}}(P_{(R/p^2R)\,|\,(A/p^2A)}) @>>>.
\endCD
\end{multline*}
The connecting homomorphism $\beta_p$ in the sequence above is the Bockstein homomorphism on local cohomology, see~\S\ref{subsection:bockstein}. The map $\beta_p$ induces an isomorphism
\[
\CD
\coker\Big(H^d_{\Delta_{R\,|\,A}}(P_{(R/p^2R)\,|\,(A/p^2A)}) @>>> H^d_{\Delta_{R\,|\,A}}(P_{(R/pR)\,|\,(A/pA)})\Big) @>>> \image(\beta_p),
\endCD
\]
and we denote the inverse of this isomorphism by $\beta_p^{-1}$. Then, along the same lines as Proposition~\ref{proposition:lift:to:Z}, we have:

\begin{proposition}
\label{proposition:lift:to:Zp2}
There is an isomorphism of $P_{R|A}$-modules
\[
\CD
\image(\beta_p) @>>> \coker\Big(D_{(R/p^2R)\,|\,(A/p^2A)} @>>> D_{(R/pR)\,|\,(A/pA)}\Big)
\endCD
\]
induced by $\rho^*\circ\gamma\circ\delta_{f(\bsy)}\circ\beta_p^{-1}$. In particular, for an element 
\[
\nu\in H^d_{\Delta_{R\,|\,A}}(P_{(R/pR)\,|\,(A/pA)}),
\]
the corresponding differential operator $(\rho^*\circ\gamma\circ\delta_{f(\bsy)})(\nu)\in D_{(R/pR)|(A/pA)}$ lifts to an element of $D_{(R/p^2R)|(A/p^2A)}$ if and only if $\beta_p(\nu)= 0$.
\end{proposition}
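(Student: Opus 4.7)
The plan is to mimic the proof of Proposition~\ref{proposition:lift:to:Z}, but with the Bockstein-type short exact sequence
\[
0 \to P_{(R/pR)\,|\,(A/pA)} \xrightarrow{p} P_{(R/p^2R)\,|\,(A/p^2A)} \to P_{(R/pR)\,|\,(A/pA)} \to 0
\]
playing the role of the sequence $0\to P_{R|A}\xrightarrow{p} P_{R|A}\to P_{(R/pR)|(A/pA)}\to 0$ used there. First, I would apply Proposition~\ref{proposition:isomorphism:R} twice: once with base ring $A/pA$ and hypersurface $R/pR$ in the polynomial ring $(A/pA)[\bsx]$, and once with base ring $A/p^2A$ and hypersurface $R/p^2R$ in $(A/p^2A)[\bsx]$. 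This yields two $P_{R|A}$-linear isomorphisms of the form $\rho^*\circ\gamma\circ\delta_{f(\bsy)}$, from $H^d_{\Delta_{R|A}}(P_{(R/pR)|(A/pA)})$ to $D_{(R/pR)|(A/pA)}$ and from $H^d_{\Delta_{R|A}}(P_{(R/p^2R)|(A/p^2A)})$ to $D_{(R/p^2R)|(A/p^2A)}$.

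Next, I would assemble these into the commutative square
\[
\begin{CD}
H^d_{\Delta_{R\,|\,A}}(P_{(R/p^2R)\,|\,(A/p^2A)}) @>>> H^d_{\Delta_{R\,|\,A}}(P_{(R/pR)\,|\,(A/pA)})\\
@V{\cong}VV @V{\cong}VV\\
D_{(R/p^2R)\,|\,(A/p^2A)} @>>> D_{(R/pR)\,|\,(A/pA)},
\end{CD}
\]
whose horizontal maps are induced by reduction mod $p$. Commutativity is a direct consequence of the naturality of the constructions $\rho^*$, $\gamma$, and $\delta_{f(\bsy)}$ in the base ring $A$: the dualization $\gamma$ and the connecting map $\delta_{f(\bsy)}$ are defined from Koszul and local cohomology, both of which commute with the base change $A/p^2A \to A/pA$.

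Taking cokernels of the two horizontal arrows then produces an induced isomorphism of $P_{R|A}$-modules. To finish, I would use the portion
\[
H^d_{\Delta_{R\,|\,A}}(P_{(R/p^2R)\,|\,(A/p^2A)}) \to H^d_{\Delta_{R\,|\,A}}(P_{(R/pR)\,|\,(A/pA)}) \xrightarrow{\beta_p} H^{d+1}_{\Delta_{R\,|\,A}}(P_{(R/pR)\,|\,(A/pA)})
\]
of the long exact sequence defining the Bockstein $\beta_p$: by exactness, $\beta_p$ factors through the cokernel of the first arrow and induces an isomorphism from that cokernel onto $\image(\beta_p)$. Inverting this identification gives the advertised map $\rho^*\circ\gamma\circ\delta_{f(\bsy)}\circ\beta_p^{-1}$. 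The ``in particular'' statement is then immediate: for $\nu\in H^d_{\Delta_{R|A}}(P_{(R/pR)|(A/pA)})$, the operator $(\rho^*\circ\gamma\circ\delta_{f(\bsy)})(\nu)$ lifts to $D_{(R/p^2R)|(A/p^2A)}$ if and only if $\nu$ lies in the image of the top horizontal map, which by exactness is the same as $\beta_p(\nu)=0$.

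I do not expect any serious obstacle; the argument is a transcription of the proof of Proposition~\ref{proposition:lift:to:Z} with $\beta_p$ replacing $\delta_p$. The only point that needs a moment's care is confirming that Proposition~\ref{proposition:isomorphism:R} was stated for an arbitrary commutative base, so the substitutions $A\leadsto A/pA$ and $A\leadsto A/p^2A$ are legitimate—which they are, since the hypothesis that $p$ is regular on $P_{R|A}$ ensures that the relevant hypersurface equations remain nonzerodivisors after reduction.
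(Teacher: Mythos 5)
Your proposal is correct and takes essentially the same approach the paper intends: the paper explicitly says Proposition~\ref{proposition:lift:to:Zp2} follows ``along the same lines as Proposition~\ref{proposition:lift:to:Z},'' and your argument is exactly that transcription, replacing the sequence $0\to P_{R|A}\xrightarrow{p}P_{R|A}\to P_{(R/pR)|(A/pA)}\to 0$ with the Bockstein sequence and $\delta_p$ with $\beta_p$. Your commutative square, the identification of the cokernel of the top row with $\image(\beta_p)$ via exactness, and the final equivalence $\beta_p(\nu)=0$ iff $\nu$ lifts are all exactly what the paper has in mind.
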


\subsection{Lifting Frobenius trace}
\label{subsection:lift:frobenius:trace}

Let $S\colonequals\ZZ[\bsx]$ be a polynomial ring over $\ZZ$ in the indeterminates $\bsx\colonequals x_0,\dots,x_d$. Identifying $P_S$ with $\ZZ[\bsx,\bsy]$ as before, note that $P_S$ may also be viewed as a polynomial ring over $\ZZ$ in the indeterminates
\[
x_0,\ \dots,\ x_d,\ y_0-x_0,\ \dots,\ y_d-x_d.
\]
Fix a prime integer $p>0$, and let $\Lambda_p\colon P_S \to P_S$ denote the standard lift of Frobenius with respect to the indeterminates above, i.e., $\Lambda_p$ is the endomorphism of $P_S$ with $\Lambda_p(x_i)=x_i^p$ and~$\Lambda_p(y_i-x_i)=(y_i-x_i)^p$ for each $i$. Let $\phi_p$ denote the corresponding $p$-derivation on $P_S$.

Let $R=S/(f(\bsx))$ be a graded hypersurface. Assume that $p$ is regular on~$P_R$, in which case there is an exact sequence
\begin{equation}
\label{equation:trace:cd}
\minCDarrowwidth20pt
\CD
H^d_{\Delta_R}(P_R) @>>> H^d_{\Delta_R}(P_R/pP_R) @>\delta_p>> H^{d+1}_{\Delta_R}(P_R) @>p>> H^{d+1}_{\Delta_R}(P_R) @>{\pi_p}>> H^{d+1}_{\Delta_R}(P_R/pP_R).
\endCD
\end{equation}
Let $\eta_R\in H^d_{\Delta_R}(P_R)$ be the element that corresponds to the identity map in $D_R$, as in Example~\ref{example:identity:R}, and let $\bar{\eta}_R$ denote its image in $H^d_{\Delta_R}(P_R/pP_R)$. We claim that 
\[
\delta_p\big(F(\bar{\eta}_R)\big)\ =\ \left[\frac{\phi_p \big(f(\bsy)-f(\bsx)\big)}{\prod_{i=0}^d (y_i-x_i)^p}\right]
\]
as elements of $H^{d+1}_{\Delta_R}(P_R)$. To see this, take $g_i\in P_S$ with $f(\bsy)-f(\bsx)=\sum_{i=0}^d (y_i-x_i)g_i$~as in Example~\ref{example:identity:R}. Then, in $P_S$, one has
\begin{align*}
\phi_p\big(f(\bsy)-f(\bsx)\big)\ &=\ \frac{1}{p}\Big(\Lambda_p\big(f(\bsy)-f(\bsx)\big)\ -\ \big(f(\bsy)-f(\bsx)\big)^p\Big)\\
&=\ \frac{1}{p}\Big(\sum_i\Lambda_p(y_i-x_i)\Lambda_p(g_i)\ -\ \big(f(\bsy)-f(\bsx)\big)^p\Big)\\
&=\ \frac{1}{p}\Big(\sum_i(y_i-x_i)^p\Lambda_p(g_i)\ -\ \big(f(\bsy)-f(\bsx)\big)^p\Big)\\
&=\ \frac{1}{p}\Big(\sum_i(y_i-x_i)^p\big(g_i^p+p\phi_p(g_i)\big)\ -\ \big(f(\bsy)-f(\bsx)\big)^p\Big)\\
&=\ \sum_i(y_i-x_i)^p\phi_p(g_i)\ +\ \frac{1}{p}\Big(\sum_i(y_i-x_i)^pg_i^p - \big(f(\bsy)-f(\bsx)\big)^p\Big).
\end{align*}
It follows that the image of $\phi_p\big(f(\bsy)-f(\bsx)\big)$ in $P_R$, i.e., modulo the ideal $(f(\bsx),f(\bsy))$, is
\[
\sum_i(y_i-x_i)^p\phi_p(g_i)\ +\ \frac{1}{p}\sum_i(y_i-x_i)^p g_i^p.
\]
Hence, in $H^{d+1}_{\Delta_R}(P_R)$, one has
\begin{align*}
\left[\frac{\phi_p\big(f(\bsy)-f(\bsx)\big)}{\prod_i (y_i-x_i)^p}\right]\ 
&=\ \left[\frac{\sum_i(y_i-x_i)^p\phi_p (g_i)+\frac{1}{p}\sum_i(y_i-x_i)^pg_i^p}{\prod_i(y_i-x_i)^p}\right]\\
&=\ \left[\frac{\frac{1}{p}\sum_i(y_i-x_i)^pg_i^p}{\prod_i(y_i-x_i)^p}\right]\\
&=\ \delta_p\big(F(\bar{\eta}_R)\big),
\end{align*}
which proves the claim. Similarly, with $\beta_p$ denoting the Bockstein homomorphism
\[
\CD
H^d_{\Delta_R}(P_R/pP_R) @>{\ \ \pi_p\, \circ \, \delta_p\ \ }>> H^{d+1}_{\Delta_R}(P_R/pP_R)
\endCD
\]
with $\delta_p$ and $\pi_p$ as in~\eqref{equation:trace:cd}, one see that
\[
\beta_p\big(F(\bar{\eta}_R)\big)\ =\ \left[\frac{\phi_p\big(f(\bsy)-f(\bsx)\big)}{\prod_{i=0}^d (y_i-x_i)^p}\right]
\]
as elements of $H^{d+1}_{\Delta_R}(P_R/pP_R)$.

Combining the preceding calculations with Propositions~\ref{proposition:lift:to:Z}~and~\ref{proposition:lift:to:Zp2}, we obtain:

\begin{theorem}
\label{theorem:lifting:trace}
Let $S\colonequals\ZZ[\bsx]$ be a polynomial ring in the indeterminates $\bsx\colonequals x_0,\dots,x_d$. Fix a nonzero homogeneous polynomial $f(\bsx)$ in $S$, and set $R\colonequals S/(f(\bsx))$. Let $p$ be a prime integer that is a regular element on $P_R$. Let $\Lambda_p$ be the endomorphism of $P_S\colonequals\ZZ[\bsx,\bsy]$ with
\[
\Lambda_p(x_i)=x_i^p\quad\text{ and }\quad\Lambda_p(y_i-x_i)=(y_i-x_i)^p
\]
for each $i$, and let $\phi_p$ denote the corresponding $p$-derivation.

Then the Frobenius trace map~$\Phi_{R/pR}$ on~$R/pR$, as in \S\ref{subsection:frobenius:trace}, lifts to a differential operator on $R$ if and only if the local cohomology element
\[
\left[\frac{\phi_p\big(f(\bsy)-f(\bsx)\big)}{\prod_{i=0}^d (y_i-x_i)^p}\right]\ \in\ H^{d+1}_{\Delta_R}(P_R)
\]
is zero; similarly, $\Phi_{R/pR}$ lifts to a differential operator on $R/p^2R$ if and only if the image of the displayed element in $H^{d+1}_{\Delta_R}(P_R/pP_R)$ is zero.
\end{theorem}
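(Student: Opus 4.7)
The strategy is to assemble machinery that is already in place: the identification of the Frobenius trace with a distinguished local cohomology class, the liftability criteria of Propositions \ref{proposition:lift:to:Z} and \ref{proposition:lift:to:Zp2}, and the explicit computation of the connecting maps $\delta_p$ and $\beta_p$ on $F(\bar{\eta}_R)$ carried out just before the theorem statement. Once these ingredients are matched up, the theorem is immediate.

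First I would recall, from \S\ref{subsection:frobenius:trace} applied to the graded characteristic-$p$ hypersurface $R/pR$ over the field $\ZZ/p\ZZ$, that the Frobenius trace $\Phi_{R/pR}$ corresponds under the $D_{R/pR}$-module isomorphism of Theorem \ref{theorem:cyclic:proved} to the class $F(\bar{\eta}_R) \in H^d_{\Delta_R}(P_R/pP_R)$, where $\bar{\eta}_R$ is the reduction modulo $p$ of the element $\eta_R$ of Example \ref{example:identity:R} and $F$ denotes the Frobenius action on local cohomology. The remark at the end of \S\ref{subsection:differential operators} that $P_{R/pR} = P_R/pP_R$ in positive characteristic ensures that this is exactly the local cohomology module that appears in Propositions \ref{proposition:lift:to:Z} and \ref{proposition:lift:to:Zp2} with $A = \ZZ$.

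Next, to handle the lift to $D_R$, I would apply Proposition \ref{proposition:lift:to:Z} with $\nu = F(\bar{\eta}_R)$, which says that $\Phi_{R/pR}$ lifts to an element of $D_R$ if and only if $\delta_p(F(\bar{\eta}_R)) = 0$ in $H^{d+1}_{\Delta_R}(P_R)$. The explicit $\delta_p$-calculation performed in \S\ref{subsection:lift:frobenius:trace} immediately preceding the theorem identifies this class with
\[
\left[\frac{\phi_p(f(\bsy)-f(\bsx))}{\prod_{i=0}^d (y_i-x_i)^p}\right] \in H^{d+1}_{\Delta_R}(P_R),
\]
yielding the first equivalence. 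For the mod-$p^2$ statement I would repeat the argument using Proposition \ref{proposition:lift:to:Zp2} and the parallel Bockstein computation already recorded, which identifies $\beta_p(F(\bar{\eta}_R))$ with the image of the same cocycle in $H^{d+1}_{\Delta_R}(P_R/pP_R)$.

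I do not anticipate any real obstacle, since the essential content has been pre-packaged. The only bookkeeping point requiring care is to verify that the Frobenius trace identification of \S\ref{subsection:frobenius:trace}, stated over a characteristic-$p$ base field, applies verbatim when $R/pR$ is regarded as a $(\ZZ/p\ZZ)$-algebra; this is automatic from the compatibility of the construction of $\Phi^e_R$ with base change together with the identification of $P_{R/pR}$ with $P_R/pP_R$.
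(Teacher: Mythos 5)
Your proposal is correct and matches the paper's argument exactly: the paper introduces the theorem with the phrase ``Combining the preceding calculations with Propositions~\ref{proposition:lift:to:Z}~and~\ref{proposition:lift:to:Zp2}, we obtain,'' which is precisely the assembly you describe. The identification of $\Phi_{R/pR}$ with $F(\bar{\eta}_R)$ from~\S\ref{subsection:frobenius:trace}, the computation of $\delta_p(F(\bar{\eta}_R))$ and $\beta_p(F(\bar{\eta}_R))$ just before the theorem statement, and the liftability criteria are combined in exactly the order you propose, and your bookkeeping remark about $P_{R/pR}=P_R/pP_R$ is the right thing to observe.
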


\begin{remark}
For $S$ and $R$ as in the previous theorem set $a\colonequals\deg f(\bsx)-(d+1)$. Then, with the grading shifts as below, Theorem~\ref{theorem:cyclic:proved} provides degree-preserving isomorphisms
\[
D_R\ \cong\ H^d_{\Delta_R}(P_R)(a)\quad\text{ and }\quad D_{R/pR}\ \cong\ H^d_{\Delta_R}(P_R/pP_R)(a).
\]
In particular, the identity element in the ring $D_R$ or in the ring $D_{R/pR}$ corresponds to a cohomology class in ${[H^d_{\Delta_R}(P_R)]}_a$ or ${[H^d_{\Delta_R}(P_R/pP_R)]}_a$ respectively; confer Example~\ref{example:identity:R}.

It is a straightforward calculation that the Frobenius trace map $\Phi_{R/pR}$ on $R/pR$ is a differential operator of degree $ap-a$. It follows that $\Phi_{R/pR}$ corresponds to a cohomology class in ${[H^d_{\Delta_R}(P_R/pP_R)]}_{ap}$ as, indeed, is evident by the calculation in~\S\ref{subsection:frobenius:trace}.
\end{remark}

\begin{remark}
Consider $R\colonequals S/fS$, where $S\colonequals\ZZ[x_0,x_1,x_2]$ and $f(\bsx)\colonequals x_0^3+x_1^3+x_2^3$. Let $p\neq3$ be a prime integer. By \cite[\S3]{Smith} and~\cite[Example~6.6]{Jeffries}, the Frobenius trace map~$\Phi_{R/pR}$ on~$R/pR$ does not lift to a differential operator on $R$. Using Theorem~\ref{theorem:lifting:trace}, it follows that the local cohomology element
\[
\zeta_p\colonequals\left[\frac{\phi_p\big(f(\bsy)-f(\bsx)\big)}{\prod(y_i-x_i)^p}\right]
\]
in $H^3_{\Delta_R}(P_R)$ is nonzero. Note that $\zeta_p$ is annihilated by $p$ in view of~\eqref{equation:trace:cd}. Hence, the module $H^3_{\Delta_R}(P_R)$ contains a nonzero $p$-torsion element for each prime integer $p\neq 3$.

Likewise, consider $R\colonequals S/fS$, where $S\colonequals\ZZ[x_0,x_1,x_2,x_3]$ and $f(\bsx)\colonequals x_0^3+x_1^3+x_2^3+x_3^3$. Recent work of Mallory \cite[Theorem~1.2]{Mal} shows that $R$ has no differential operators of negative degree. For each prime integer $p\neq 3$, the Frobenius trace on $R/pR$ has negative degree, namely $1-p$, and hence does not lift to $R$. It follows that $H^4_{\Delta_R}(P_R)$ contains a nonzero $p$-torsion element for each prime integer $p\neq 3$.
\end{remark}

\section{Integer torsion in local cohomology}
\label{section:torsion}

Our approach to constructing mod $p$ differential operators that do not lift, based on the results of the previous section, is via constructing $p$-torsion elements in local cohomology modules of the form $H^{d+1}_{\Delta_R}(P_R)$. Towards this, the following lemma will prove useful; the notation is as in~\S\ref{subsection:lift:frobenius:trace}.

\begin{lemma}
\label{lemma:mainlemma}
Let $S\colonequals\ZZ[\bsx]$ be a polynomial ring in the indeterminates $\bsx\colonequals x_0,\dots,x_d$. Consider a polynomial $f(\bsx)\in S$ of the form 
\[
f(\bsx)\ =\ \sum_{i=0}^mx_if_i,
\]
where $m\le d$ and each $f_i\in S$. Set $R\colonequals S/(f(\bsx))$. Fix a prime integer $p$, and let $\phi_p$ denote the $p$-derivation of $P_S$ as in Theorem~\ref{theorem:lifting:trace}; this restricts to the standard $p$-derivation of $S$ with respect to $\bsx$.

If the local cohomology element $\displaystyle{\left[\frac{\phi_p\big(f(\bsx)\big)}{(x_0\cdots x_m)^p}\right]}$ in $H^{m+1}_{(x_0,\dots,x_m)}(R)$ is nonzero, then so is the element $\displaystyle{\left[\frac{\phi_p\big(f(\bsy)-f(\bsx)\big)}{\prod_{i=0}^d (y_i-x_i)^p}\right]}$ in $H^{d+1}_{\Delta_R}(P_R)$.

Similarly, if the image of $\displaystyle\left[\frac{\phi_p\big(f(\bsx)\big)}{(x_0\cdots x_m)^p}\right]$ in $H^{m+1}_{(x_0,\dots,x_m)}(R/pR)$ is nonzero, then so is the image of $\displaystyle \left[\frac{\phi_p\big(f(\bsy)-f(\bsx)\big)}{\prod_i(y_i-x_i)^p}\right]$ in $H^{d+1}_{\Delta_R}(P_R/pP_R)$.
\end{lemma}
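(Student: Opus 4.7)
The plan is to exhibit a ring homomorphism $\bar{\sigma}\colon P_R\to R[t_{m+1},\ldots,t_d]$ that transports the given cohomology class in $H^{d+1}_{\Delta_R}(P_R)$ to a class whose nonvanishing is controlled by the hypothesis. Concretely, define $\sigma\colon P_S\to S[t_{m+1},\ldots,t_d]$ by $x_i\mapsto x_i$ for all $i$, $y_i\mapsto 0$ for $i\le m$, and $y_i\mapsto t_i$ for $i>m$. Since $f(\bsx)=\sum_{i=0}^m x_if_i$, one has $\sigma(f(\bsy))=f(0,\ldots,0,t_{m+1},\ldots,t_d)=0$, so $\sigma$ descends to $\bar{\sigma}\colon P_R\to R[t_{m+1},\ldots,t_d]$ with $\bar\sigma(y_i-x_i)=-x_i$ for $i\le m$ and $\bar\sigma(y_i-x_i)=t_i-x_i$ for $i>m$. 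This induces a map on Čech cohomology:
\[
\bar{\sigma}_*\colon H^{d+1}_{\Delta_R}(P_R)\to H^{d+1}_{(x_0,\ldots,x_m,\,t_{m+1}-x_{m+1},\ldots,t_d-x_d)}(R[t_{m+1},\ldots,t_d]).
\]

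Next I compute the image under $\bar{\sigma}_*$ using $\phi_p(g)=(\Lambda_p(g)-g^p)/p$ with $\Lambda_p(x_i)=x_i^p$ and $\Lambda_p(y_i-x_i)=(y_i-x_i)^p$. One finds $\sigma(\Lambda_p(f(\bsy)-f(\bsx)))=f(\bsx^p+\sigma(\bsy-\bsx)^p)-f(\bsx^p)$, where the $i$-th coordinate of the first argument is $x_i^p+(-x_i)^p$ for $i\le m$. For $p$ odd this vanishes, whence by the form of $f$ the entire first term vanishes; a direct calculation yields $\bar{\sigma}(\phi_p(f(\bsy)-f(\bsx)))=-\phi_p(f(\bsx))$. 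For $p=2$ the same calculation produces an additional summand $h=\sum_{i\le m}x_i^2\, f_i(\cdots)$; however, each summand of $h$ carries a factor $x_i^2$ that cancels against a factor of the denominator $\prod_{j\le m}x_j^2\prod_{j>m}(t_j-x_j)^2$, placing the resulting fraction in the image of the Čech differential $C^d\to C^{d+1}$ and so representing zero in top cohomology. In either case, $\bar{\sigma}_*$ sends the original element to (up to sign) the class of $\phi_p(f(\bsx))/((x_0\cdots x_m)^p\prod_{i>m}(t_i-x_i)^p)$.

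Finally, setting $u_i\colonequals t_i-x_i$ for $i>m$ realizes $R[t_{m+1},\ldots,t_d]$ as the polynomial ring $R[u_{m+1},\ldots,u_d]$, in which $H^q_{(u_{m+1},\ldots,u_d)}=0$ for $q\ne d-m$. The Grothendieck composite-functor spectral sequence collapses to give an isomorphism
\[
H^{d+1}_{(x_0,\ldots,x_m,\,u_{m+1},\ldots,u_d)}(R[u_{m+1},\ldots,u_d])\ \cong\ H^{m+1}_{(x_0,\ldots,x_m)}\!\bigl(H^{d-m}_{(u_{m+1},\ldots,u_d)}(R[u_{m+1},\ldots,u_d])\bigr),
\]
and since $H^{d-m}_{(u_{m+1},\ldots,u_d)}(R[u_{m+1},\ldots,u_d])\cong\bigoplus_{\alpha\in\NN_{\ge 1}^{d-m}}R\cdot u^{-\alpha}$ as an $R$-module, the right side decomposes as $\bigoplus_\alpha H^{m+1}_{(x_0,\ldots,x_m)}(R)\cdot u^{-\alpha}$. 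Under this identification the image class is the $\alpha=(p,\ldots,p)$-component $[\phi_p(f(\bsx))/(x_0\cdots x_m)^p]$, which is nonzero by hypothesis; hence $\bar{\sigma}_*$ of the original class is nonzero, forcing the original class itself to be nonzero in $H^{d+1}_{\Delta_R}(P_R)$. The mod-$p$ version follows from the same argument after reducing $\bar{\sigma}$ and all computations modulo $p$.

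I expect the principal obstacle to be the explicit computation of $\bar{\sigma}(\phi_p(f(\bsy)-f(\bsx)))$, in particular handling characteristic $p=2$ uniformly by absorbing the anomalous $h$-term into a Čech coboundary in top degree; the remaining steps (descent of $\sigma$, spectral-sequence collapse, and identification of the target component) are formal consequences of the setup.
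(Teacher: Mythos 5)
Your argument is correct, and it takes a genuinely different route from the paper's. The paper proves the contrapositive by an explicit ideal-membership argument in $P_S$: assuming the $H^{d+1}_{\Delta_R}(P_R)$ class vanishes, it writes down the resulting relation, applies the specialization $x_i\mapsto 0$ for $i\le m$ (using compatibility of the standard $p$-derivation with such specializations), then invokes the fact that $y_{m+1}-x_{m+1},\dots,y_d-x_d$ are a regular sequence modulo $(p,f(\bsy),y_0^{p+k},\dots,y_m^{p+k})$ to drop their exponents, specializes $x_i\mapsto y_i$ for $i>m$, and renames to recover an ideal membership that kills the $H^{m+1}$ class. You instead work ``forwards'': you build the ring map $\bar\sigma\colon P_R\to R[t_{m+1},\dots,t_d]$ (which is, up to swapping the roles of $\bsx$ and $\bsy$, the same specialization), push the class forward through $\bar\sigma_*$, compute $\bar\sigma(\phi_p(f(\bsy)-f(\bsx)))$ explicitly, and then identify the target $H^{d+1}_{(x_0,\dots,x_m,\,t_{m+1}-x_{m+1},\dots,t_d-x_d)}(R[t_{m+1},\dots,t_d])$ with $\bigoplus_{\alpha}H^{m+1}_{(x_0,\dots,x_m)}(R)$ via the collapse of the composite-functor spectral sequence, reading off the $\alpha=(p,\dots,p)$ component. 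The two arguments are morally dual: your K\"unneth-type decomposition replaces the paper's regular-sequence step, and your direct computation of $\bar\sigma(\phi_p(\cdots))$ (including the $p=2$ wrinkle, where the extra summand $h$ is killed because each of its terms carries a factor $x_i^p$ canceling against the denominator, putting it in the image of $C^d\to C^{d+1}$) replaces the paper's commutativity-of-$\phi_p$-with-specialization observation. Your version is cleaner conceptually and makes the transported class visible as an honest summand; the paper's version avoids spectral sequences and stays entirely at the level of ideal memberships. Both handle the mod-$p$ variant by the same reduction. One small remark: you should note the sign $(-1)^{(m+1)p}$ coming from $\bar\sigma(y_i-x_i)=-x_i$ in the denominator, but since nonvanishing is all that matters this is harmless.
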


\begin{proof}
Recall the notation $\bsy\colonequals y_0,\dots,y_d$. We identify $P_S$ with the polynomial ring $\ZZ[\bsx,\bsy]$ and $P_R$ with~$\ZZ[\bsx,\bsy]/(f(\bsx), f(\bsy))$, so that $\Delta_R$ is the ideal generated by the elements
\[
y_0-x_0,\ y_1-x_1,\ \dots,\ y_d-x_d.
\]
Suppose the displayed element of $H^{d+1}_{\Delta_R}(P_R)$ is zero. Then there exists an integer $k\ge 0$ such that in the ring $P_S$ one has
\begin{multline}
\label{equation:lemma:1}
\frac{\Lambda_p\big(f(\bsy)-f(\bsx)\big)-\big(f(\bsy)-f(\bsx)\big)^p}{p}(y_0-x_0)^k\cdots(y_d-x_d)^k\\
\in\ \big(f(\bsy),\ f(\bsx),\ (y_0-x_0)^{p+k},\ \dots,\ (y_d-x_d)^{p+k}\big)P_S.
\end{multline}
Let $P'$ denote the image of $P_S$ under the specialization $x_i\mapsto 0$ for $0\le i\le m$. Note that~$f(\bsx)\mapsto 0$ under this specialization. Since one has a commutative diagram
\[
\CD
P_S@>\Lambda_p>>P_S\\
@VVV @VVV\\
P'@>\Lambda_p>>P',
\endCD
\]
the ideal membership~\eqref{equation:lemma:1} specializes to give
\begin{multline}
\label{equation:lemma:2}
\frac{\Lambda_p\big(f(\bsy)\big)-\big(f(\bsy)\big)^p}{p}\ y_0^k\cdots y_m^k\ (y_{m+1}-x_{m+1})^k\cdots(y_d-x_d)^k\\
\in\ \big(f(\bsy),\ y_0^{p+k},\ \dots,\ y_m^{p+k},\ (y_{m+1}-x_{m+1})^{p+k},\ \dots,\ (y_d-x_d)^{p+k}\big)P'.
\end{multline}
The elements $y_{m+1}-x_{m+1},\ \dots,\ y_d-x_d$ are algebraically independent over 
\[
\ZZ[\bsy]/\big(f(\bsy),\ y_0^{p+k},\ \dots,\ y_m^{p+k}\big),
\]
so $y_{m+1}-x_{m+1},\ \dots,\ y_d-x_d$, as well as their powers, form a regular sequence in the ring
\[
P'/\big(f(\bsy),\ y_0^{p+k},\ \dots,\ y_m^{p+k}\big)P'.
\]
Using this,~\eqref{equation:lemma:2} implies
\begin{multline*}
\frac{\Lambda_p\big(f(\bsy)\big)-\big(f(\bsy)\big)^p}{p}\ y_0^k\cdots y_m^k\\
\in\ \big(f(\bsy),\ y_0^{p+k},\ \dots,\ y_m^{p+k},\ (y_{m+1}-x_{m+1})^p,\ \dots,\ (y_d-x_d)^p\big)P'.
\end{multline*}
Next, specialize $x_i\mapsto y_i$ for $m+1\le i\le d$, to obtain
\[
\frac{{\Lambda}_p\big(f(\bsy)\big)-\big(f(\bsy)\big)^p}{p}\ y_0^k\cdots y_m^k\ \in\ \big(f(\bsy),\ y_0^{p+k},\ \dots,\ y_m^{p+k}\big)\ZZ[\bsy],
\]
where $\Lambda_p$ is the standard lift of Frobenius on $\ZZ[\bsy]$ with respect to $\bsy$. Renaming $y_i\mapsto x_i$ for each $i$, the above reads
\[
\phi_p\big(f(\bsx)\big)x_0^k\cdots x_m^k\ \in\ \big(f(\bsx),\ x_0^{p+k},\ \dots,\ x_m^{p+k}\big)\ZZ[\bsx],
\]
implying that
\[
\left[\frac{\phi_p\big(f(\bsx)\big)}{(x_0\cdots x_m)^p}\right]\ \in\ H^{m+1}_{(x_0,\dots,x_m)}(R)
\]
is zero. The proof of the final assertion is similar.
\end{proof}

A question of Huneke~\cite[Problem~4]{Huneke:Sundance} asks whether local cohomology modules of Noetherian rings have finitely many associated prime ideals. This was answered in the negative by the second author in \cite[\S4]{Singh:MRL}: there is hypersurface $R$ over $\ZZ$ for which a local cohomology module~$H^k_\fraka(R)$ has $p$-torsion elements for each prime integer $p$. From this, it follows that~$H^k_\fraka(R)$ has infinitely many associated primes; this is extended to several natural families of hypersurfaces by Theorems~\ref{theorem:quadratic},~\ref{theorem:pfaffian}, and~\ref{theorem:determinant}. On the other hand, for a polynomial ring over $\ZZ$, or, more generally, a smooth $\ZZ$-algebra, the answer to Huneke's question is positive; this follows from the following result, which we use in the next section:

\begin{theorem}\cite[Theorem~3.1]{BBLSZ}
\label{theorem:bblsz}
Let $S$ be a smooth $\ZZ$-algebra, and $\fraka$ an ideal of $S$ generated by elements $\bsf=f_1,\dots,f_t$. Let $k$ be a nonnegative integer.

If a prime integer is a nonzerodivisor on the Koszul cohomology module $H^k(\bsf;\,S)$, then it is also a nonzerodivisor on the local cohomology module $H^k_\fraka(S)$.
\end{theorem}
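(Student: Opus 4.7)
The plan is to reduce the claim to a propagation problem about Koszul cohomologies of powers of $\bsf$ via the identification $H^k_\fraka(S)=\varinjlim_n H^k(\bsf^n;\,S)$, and then to exploit the flatness of the Frobenius endomorphism of $S/pS$ that is afforded by the smoothness hypothesis.

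First, applying the Koszul long exact sequence to $0\to S\xrightarrow{p}S\to S/pS\to 0$ yields, for each $n\ge 1$, a universal-coefficient-type short exact sequence
\[
0\ \to\ H^{k-1}(\bsf^n;\,S)/p\ \to\ H^{k-1}(\bsf^n;\,S/pS)\ \to\ {}_p H^k(\bsf^n;\,S)\ \to\ 0.
\]
Hence $p$ is a nonzerodivisor on $H^k(\bsf^n;\,S)$ precisely when the leftmost map is surjective, and by exactness of filtered direct limits the analogous equivalence holds for $H^k_\fraka(S)$ after passing to $\varinjlim_n$. The theorem is therefore reduced to showing that the surjectivity for $n=1$ propagates to all $n$; by cofinality of the Frobenius powers in the direct system, it suffices to treat $n=p^e$.

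Second, because $S$ is smooth over $\ZZ$, the reduction $S/pS$ is regular, so Kunz's theorem provides flatness of the Frobenius endomorphism $F$ of $S/pS$. The $F$-linear morphism of Koszul complexes $K^\bullet(\bsf;\,S/pS)\to K^\bullet(\bsf^p;\,S/pS)$ induced by $s\mapsto s^p$, iterated $e$ times and combined with Frobenius flatness, identifies $H^{k-1}(\bsf^{p^e};\,S/pS)$ with the Frobenius pullback $F^{e*}H^{k-1}(\bsf;\,S/pS)$. In particular, $H^{k-1}(\bsf^{p^e};\,S/pS)$ is generated as an $S/pS$-module by Frobenius twists $F^e[\eta]$ of classes $[\eta]\in H^{k-1}(\bsf;\,S/pS)$. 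By the hypothesis for $n=1$, each such $[\eta]$ admits an integral cocycle lift $\tilde\eta\in K^{k-1}(\bsf;\,S)$, and the remaining task is to lift $F^e[\eta]$ to a class in $H^{k-1}(\bsf^{p^e};\,S)/p$.

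The main obstacle, and the place where smoothness over $\ZZ$ is indispensable, is this last lifting step: the naive $p^e$-th power of an integral cocycle $\tilde\eta$ is a cocycle for $\bsf^{p^e}$ only modulo $p$, so one must correct it by an element of $pK^{k-1}(\bsf^{p^e};\,S)$ to obtain a genuine cocycle, and verify that the resulting class is well-defined. Smoothness enters here through the existence of \'etale-local lifts of Frobenius on $S$ and the consequent vanishing of the relevant lifting obstruction, so that the correction can be made Zariski-locally and glued. Once this is arranged, the surjectivity propagates from $n=1$ to $n=p^e$, and taking $\varinjlim_n$ yields surjectivity of $H^{k-1}_\fraka(S)/p\to H^{k-1}_\fraka(S/pS)$, which by the opening reduction is equivalent to $p$ being a nonzerodivisor on $H^k_\fraka(S)$.
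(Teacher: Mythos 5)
Your opening reduction is correct: the universal-coefficient-type exact sequence
\[
0 \to H^{k-1}(\bsf^n;\,S)/p \to H^{k-1}(\bsf^n;\,S/pS) \to {}_{p}H^k(\bsf^n;\,S) \to 0
\]
and the exactness of filtered colimits reduce the assertion to surjectivity of $H^{k-1}_\fraka(S)/p \to H^{k-1}_\fraka(S/pS)$, and your use of Kunz to identify $H^{k-1}(\bsf^{p^e};\,S/pS)$ with $F^{e*}H^{k-1}(\bsf;\,S/pS)$ is also correct. But the step you yourself flag as ``the main obstacle'' is a genuine gap, and I do not think your sketch closes it.

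Here is the difficulty made concrete. Suppose $\tilde\eta = \sum_I a_I e_I$ is a cocycle in $K^{k-1}(\bsf;\,S)$ lifting $\eta$. The coefficient-wise $p^e$-th power $\tilde\eta^{(p^e)} := \sum_I a_I^{p^e} e_I$ satisfies $\partial_{\bsf^{p^e}}\bigl(\tilde\eta^{(p^e)}\bigr) = p\mu$ for some $\mu \in K^k(\bsf^{p^e};\,S)$, and since $p$ is a nonzerodivisor, $\mu$ is itself a cocycle with $p[\mu]=0$. The naive power is correctable to a genuine cocycle exactly when $[\mu]=0$ in $H^k(\bsf^{p^e};\,S)$, i.e., when the very $p$-torsion class $[\mu]\in {}_pH^k(\bsf^{p^e};\,S)$ vanishes --- but the vanishing of ${}_pH^k(\bsf^{p^e};\,S)$ is precisely the ``propagated'' statement you are trying to establish at level $n=p^e$. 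So the argument as structured is circular: the obstruction to the lift is the conclusion.

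The appeal to \'etale-local Frobenius lifts does not escape this. A Zariski-local Frobenius lift $\Phi$ on $S_g$ turns $\tilde\eta$ into a cocycle for the Koszul complex $K^\bullet\bigl(\Phi^e(\bsf);\,S_g\bigr)$, not for $K^\bullet(\bsf^{p^e};\,S_g)$: the elements $\Phi^e(f_i)=f_i^{p^e}+p(\cdots)$ and $f_i^{p^e}$ agree only modulo $p$, generate genuinely different ideals of $S_g$ in general, and there is no natural chain map between the two Koszul complexes; so $\Phi^e(\tilde\eta)$ does not produce an element of $H^{k-1}_\fraka(S_g)$. Even setting that aside, the proposal to ``glue'' such local corrections is left unexplained, and nothing has been said about compatibility on overlaps. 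A further, softer point: your plan tries to prove surjectivity of $H^{k-1}(\bsf^{p^e};\,S)/p \to H^{k-1}(\bsf^{p^e};\,S/pS)$ for every $e$, which is strictly stronger than what is required (surjectivity only in the colimit) and, as far as I can see, is not what is actually being proved in \cite{BBLSZ}.

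The published proof of Theorem~3.1 of \cite{BBLSZ} does not lift Koszul cocycles level by level across Frobenius powers. It works instead with the commutative square relating Koszul cohomology to local cohomology for both $S$ and $S/pS$, and exploits the unit Frobenius-module (in Lyubeznik's sense) structure on $H^{k-1}_\fraka(S/pS)$ that flatness of Frobenius affords: the local cohomology module is generated, as a unit $F$-module over the regular ring $S/pS$, by the image of a single Koszul cohomology module, and the hypothesis is used to show that this generator already lies in the image of $H^{k-1}_\fraka(S)$. The delicate point --- showing the relevant image is an $F$-stable submodule --- is handled there by structural $F$-module arguments rather than by cocycle corrections, which is exactly the place your proposal leaves a hole.
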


\section{Toric rings}

We consider the question whether differential operators lift modulo~$p$ for invariant rings of tori. An $m\times n$ integer matrix $M$ determines a map
\[
\CD
\ZZ^m @<M<< \ZZ^n,
\endCD
\]
and hence a normal affine semigroup
\[
\Lambda\colonequals\NN^n \cap \ker M.
\]
Let $A$ be a commutative ring. The semigroup ring $A[\Lambda]$ is the subring of the polynomial ring $A[\NN^n]\colonequals A[x_1,\dots,x_n]$ that is generated, as an $A$-algebra, by monomials with exponent vectors in $\Lambda$. The ring $A[\NN^n]$ has an $\NN^m$-grading where the degree of $x_i$ is the $i$-th column of~$M$. Under this grading, the degree zero component of $A[\NN^n]$ is precisely $A[\Lambda]$, which is hence a direct summand of $A[\NN^n]$ as an $A[\Lambda]$-module. We refer to a ring of the form $A[\Lambda]$, or an isomorphic copy, as a \emph{toric $A$-algebra}.

When $A$ has an infinite group of units $A^{\times}$, the ring $A[\Lambda]$ is the invariant ring for an action of a product of copies of $A^{\times}$ on $A[\NN^n]$, hence the name.

\begin{example}
\label{example:toric}
The integer matrix $M\colonequals\begin{pmatrix} 1 & 1 & -2 \end{pmatrix}$ yields the semigroup $\Lambda$ generated by
\[
\begin{pmatrix}2\\ 0\\ 1\end{pmatrix},\quad
\begin{pmatrix}1\\ 1\\ 1\end{pmatrix},\quad
\begin{pmatrix}0\\ 2\\ 1\end{pmatrix},
\]
so $\ZZ[\Lambda]$ is the subring $\ZZ[x_1^2x_3,\ x_1x_2x_3,\ x_2^2x_3]$ of the polynomial ring $\ZZ[x_1,x_2,x_3]$.
\end{example}

\begin{theorem}
\label{theorem:toric}
Let $R$ be a toric $\ZZ$-algebra. Then, for each prime $p$, each differential operator on $R/pR$ lifts to a differential operator on $R$; thus, the natural map
\[
D_R\otimes_\ZZ(\ZZ/p\ZZ)\ \to\ D_{R/pR}
\]
is an isomorphism.
\end{theorem}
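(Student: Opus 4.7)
The plan is to combine Proposition~\ref{proposition:lift:to:Z} with the direct-summand property of normal toric rings and Theorem~\ref{theorem:bblsz}. Writing the hypersurface presentation $R=\ZZ[x_0,\dots,x_d]/(f)$, Proposition~\ref{proposition:lift:to:Z} identifies the cokernel of the (injective) map $D_R\otimes_\ZZ\ZZ/p\ZZ\to D_{R/pR}$ with $\ann(p,H^{d+1}_{\Delta_R}(P_R))$. It therefore suffices to prove that $H^{d+1}_{\Delta_R}(P_R)$ has no $p$-torsion for every prime integer~$p$.

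Since $R=\ZZ[\Gamma]$ with $\Gamma$ a normal affine semigroup, the discussion preceding the theorem (citing \cite{Hochster:toric}) provides a split $R$-module embedding $\iota\colon R\into S=\ZZ[\NN^m]$ into a polynomial ring, with $R$-linear retraction $\pi\colon S\onto R$. Tensoring $\iota$ with itself over $\ZZ$ yields a $P_R$-module split embedding $P_R\into P_S$ with retraction $\pi\otimes\pi$, which is $P_R$-linear thanks to the $R$-linearity of~$\pi$. Applying the additive functor $H^{d+1}_{\Delta_R}(-)$ realizes $H^{d+1}_{\Delta_R}(P_R)$ as a $P_R$-module direct summand of $H^{d+1}_{\Delta_R}(P_S)=H^{d+1}_{\Delta_R P_S}(P_S)$, so it suffices to show that this larger module is $p$-torsion free. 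Because $P_S$ is smooth over $\ZZ$, Theorem~\ref{theorem:bblsz} reduces the problem further: one only needs that $p$ is a nonzerodivisor on the top Koszul cohomology $H^{d+1}(\bsz;P_S)=P_S/(z_0,\dots,z_d)$, where, choosing generators $\gamma_0,\dots,\gamma_d$ of $\Gamma$ with images $v_i\in\NN^m$ under $\Gamma\into\NN^m$, one takes $z_i=\bsx^{v_i}-\bsy^{v_i}$.

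The concluding step, which I expect to be the most delicate, identifies this quotient as a semigroup ring. Let $\sim$ be the congruence on $\NN^m\oplus\NN^m$ generated by $(v_i,0)\sim(0,v_i)$ for $0\le i\le d$, and set $M\colonequals(\NN^m\oplus\NN^m)/{\sim}$; then the natural ring surjection $P_S\onto\ZZ[M]$ has kernel obviously containing $(z_0,\dots,z_d)$, and a telescoping argument along single-step congruences yields equality. As $\ZZ[M]$ is free on the set $M$ as a $\ZZ$-module, it is $\ZZ$-flat, in particular $p$-torsion free. Unwinding the reductions then gives $\ann(p,H^{d+1}_{\Delta_R}(P_R))=0$, which is the desired isomorphism. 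The hypersurface hypothesis feeds the entire chain of reductions through Proposition~\ref{proposition:lift:to:Z}, while normality of $\Gamma$ is used precisely to secure the initial split embedding of $R$ into a polynomial ring.
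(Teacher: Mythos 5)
Your proof is correct and follows the paper's argument essentially step for step: Proposition~\ref{proposition:lift:to:Z} reduces the theorem to $p$-torsion-freeness of $H^{d+1}_{\Delta_R}(P_R)$, the split toric embedding $R\into T\colonequals\ZZ[\NN^m]$ (hence $P_R\into P_T$, split $P_R$-linearly) passes this to $H^{d+1}_{\Delta_R}(P_T)$, Theorem~\ref{theorem:bblsz} passes it to the top Koszul cohomology $P_T/\Delta_R P_T$, and the latter is a $\ZZ$-free semigroup ring. The only departure is how that last identification is justified: the paper cites \cite[Theorem~7.11]{Gilmer}, whereas you sketch a direct telescoping argument; that sketch is fine, provided you also observe (as is standard, and is the real content behind Gilmer's statement) that the kernel of $P_T\onto\ZZ[M]$ is spanned over $\ZZ$ by the binomials $\bsx^a\bsy^b-\bsx^c\bsy^d$ with $(a,b)\sim(c,d)$, so that it suffices to telescope those down to the $z_i$.
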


\begin{proof}
Write $R=\ZZ[\Lambda]$, for $\Lambda$ as in the earlier discussion. Recall that~$R$ is a direct summand of $S\colonequals\ZZ[\NN^n]$, and let $\rho\colon S\to R$ be a splitting. Then $\rho$ induces a splitting~$\bar{\rho}$ of the inclusion $R/pR\subseteq S/pS$, with the commutative diagram
\[
\CD
S @>\rho>> R\\
@VVV @VVV\\
S/pS @>\bar{\rho}>> R/pR.
\endCD
\]
Fix $\delta\in D_{R/pR}$. By \cite[Proposition~6.6]{BJNB}, there exists $\hat{\delta}\in D_{S/pS}$ such that $\hat{\delta}|_{R/pR} = \delta$. Since $S$ is a polynomial ring, $\hat{\delta}$ lifts to a differential operator $\xi\in D_S$, see for example \cite[Lemma~2.1]{BBLSZ}. By~\cite[Proposition~3.1]{Smith}, the composition $\rho\circ(\xi|_R)$ is an element of $D_R$. It is readily verified that this maps to $\delta$ under $D_R\to D_{R/pR}$.
\end{proof}

For a toric $\ZZ$-algebra $R$, while the map $D_R\to D_{R/pR}$ is indeed surjective, a differential operator on $R/pR$ need not lift to a differential operator on $R$ of the same order:

\begin{example}
The ring $\ZZ[\Lambda]$ in Example~\ref{example:toric} is readily seen to be isomorphic to 
\[
R\colonequals\ZZ[x_1^2,\ x_1x_2,\ x_2^2],
\]
which is the second Veronese subring of the polynomial ring $T\colonequals\ZZ[x_1,x_2]$.

The group $\{\pm 1\}$ acts on $T\otimes_\ZZ\CC=\CC[x_1,x_2]$, with the invariant ring being $R\otimes_\ZZ\CC$, and the ring of differential operators $D_{R\otimes_\ZZ\CC|\CC}$ is generated by the elements of~$D_{T\otimes_\ZZ\CC|\CC}$ that have even degree with respect to the standard grading on $T\otimes_\ZZ\CC$. Note that the derivations
\[
x_i\frac{\partial}{\partial x_j}\ \in\ D_{R\otimes_\ZZ\CC|\CC}
\]
have degree $0$; there are no derivations of negative degree, confer~\cite{Kantor2}. Since
\[
D^1_R\ \subseteq\ D^1_{R\otimes_\ZZ\CC|\CC},
\]
it follows that $R$ has no derivations of negative degree. In contrast, the ring $R/2R$ has a derivation of negative degree, namely
\[
x_1^{-1}\frac{\partial}{\partial x_2}\ =\ x_2^{-1}\frac{\partial}{\partial x_1},
\]
which is the endomorphism of $R/2R$ with
\begin{align*}
x_1^{2i}x_2^{2j} & \mapsto 0\\
x_1^{2i+1}x_2^{2j+1} & \mapsto x_1^{2i}x_2^{2j}
\end{align*}
for $i,j\in\NN$. This derivation lifts to the differential operator $\frac{\partial}{\partial x_1}\frac{\partial}{\partial x_2}$ in $D^2_R$.
\end{example}

\section{Quadratic forms and Pfaffians}
\label{section:pfaffian}

The main focus of this section is Pfaffian hypersurfaces, but we begin with the analogue of Theorem~\ref{theorem:main} for quadratic hypersurfaces, where the arguments are most transparent:

\begin{theorem}
\label{theorem:quadratic}
Let $x_1,\dots,x_{2m}$ be indeterminates over $\ZZ$, where $m\ge 3$, and set
\[
R\colonequals\ZZ[x_1,\dots,x_{2m}]/(\sum_{i=1}^m x_ix_{m+i}).
\]
Then, for each prime integer $p>0$, the Frobenius trace on $R/pR$ does not lift to a differential operator on $R/p^2 R$, nor, a fortiori, to a differential operator on $R$.
\end{theorem}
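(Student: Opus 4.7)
The plan is to combine Theorem~\ref{theorem:lifting:trace} with Lemma~\ref{lemma:mainlemma} to convert the statement into a concrete local cohomology nonvanishing, and then to establish that nonvanishing by means of a natural multigrading. Writing $f\colonequals\sum_{i=1}^m x_i x_{m+i}=\sum_i x_i\cdot x_{m+i}$ matches the hypothesis of Lemma~\ref{lemma:mainlemma}, so together with the mod $p$ clause of Theorem~\ref{theorem:lifting:trace} it suffices to show that the class $[\phi_p(f)/(x_1\cdots x_m)^p]$ is nonzero in $H^m_{(x_1,\dots,x_m)}(R/pR)$, where $\phi_p$ denotes the standard $p$-derivation on $\ZZ[x_1,\dots,x_{2m}]$.

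To analyze this class, I would equip $\ZZ[x_1,\dots,x_{2m}]$ with the $\ZZ^m$-multigrading in which $\deg x_i=e_i$ and $\deg x_{m+i}=-e_i$; then $f$ is homogeneous of degree $0$, the ideal $(x_1,\dots,x_m)$ is multigraded, and $H^m_{(x_1,\dots,x_m)}(R/pR)$ inherits a $\ZZ^m$-grading. Since $\phi_p(f)$ is a polynomial in the degree-zero monomials $x_i x_{m+i}$, the displayed class lies in the $-p\mathbf{1}=(-p,\dots,-p)$ component. The $-p\mathbf{1}$ component of $H^m_{(x_1,\dots,x_m)}\bigl((\ZZ/p\ZZ)[x_1,\dots,x_{2m}]\bigr)$ has basis indexed by $\alpha\in\{0,\dots,p-1\}^m$, with basis element $\bigl[x_{m+1}^{\alpha_1}\cdots x_{2m}^{\alpha_m}/x_1^{p-\alpha_1}\cdots x_m^{p-\alpha_m}\bigr]$; identifying this component with $V\colonequals(\ZZ/p\ZZ)[z_1,\dots,z_m]/(z_1^p,\dots,z_m^p)$ via $\alpha\mapsto z^\alpha$, multiplication by $f$ corresponds to multiplication by $s\colonequals z_1+\cdots+z_m$. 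Hence the $-p\mathbf{1}$ component of $H^m_{(x_1,\dots,x_m)}(R/pR)$ is $V/sV$, and the displayed class maps, up to sign, to the reduction mod $p$ of the integer polynomial $Q_m\in\ZZ[z_1,\dots,z_m]$ characterized by $pQ_m=s^p-\sum_i z_i^p$.

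To show $Q_m\notin sV$, I would change variables: set $u_1=s$ and $u_i=z_i$ for $i\ge 2$; since the ideals $(z_1^p,\dots,z_m^p)$ and $(u_1^p,\dots,u_m^p)$ coincide, one has $V\cong(\ZZ/p\ZZ)[u_1,\dots,u_m]/(u_1^p,\dots,u_m^p)$ with $s=u_1$, and thus $V/sV\cong V'\colonequals(\ZZ/p\ZZ)[u_2,\dots,u_m]/(u_2^p,\dots,u_m^p)$. Substituting $z_1=-(u_2+\cdots+u_m)$ into $pQ_m=s^p-\sum z_i^p$ and comparing with $pQ_{m-1}=(u_2+\cdots+u_m)^p-\sum_{i\ge 2}u_i^p$, a direct calculation gives the identity $Q_m\bigl(-(u_2+\cdots+u_m),u_2,\dots,u_m\bigr)=Q_{m-1}(u_2,\dots,u_m)$ in $V'$: for $p$ odd the identity already holds in $\ZZ[u_2,\dots,u_m]$, while for $p=2$ the difference is $-(u_2+\cdots+u_m)^2$, which vanishes in $V'$. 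Since $m\ge 3$, the polynomial $Q_{m-1}$ contains $u_2^{p-1}u_3$ with coefficient $\binom{p}{p-1,1}/p=1\not\equiv 0\pmod p$, and this monomial is part of a basis of $V'$; hence $Q_{m-1}\neq 0$ in $V'$, which completes the proof.

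The main technical step is the multigrading identification of the $-p\mathbf{1}$ component of $H^m_{(x_1,\dots,x_m)}(R/pR)$ with $V/sV$; once in place, the rest reduces to a direct substitution and inspection. The hypothesis $m\ge 3$ is exactly what is needed to ensure that $Q_{m-1}$ is nonzero in the $(m-1)$-variable ring $V'$, consistent with the case $m=2$ already being a toric hypersurface covered by Theorem~\ref{theorem:toric}.
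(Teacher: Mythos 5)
Your proof is correct and uses essentially the same approach as the paper: both reduce to the nonvanishing of $\bigl[\phi_p(f)/(x_1\cdots x_m)^p\bigr]$ in $H^m_{(x_1,\dots,x_m)}(R/pR)$ via Theorem~\ref{theorem:lifting:trace} and Lemma~\ref{lemma:mainlemma}, both exploit the same $\ZZ^m$-multigrading to pass to a polynomial ring in $m-1$ variables, and both conclude by inspecting the coefficient of a single monomial (your $u_2^{p-1}u_3$ is the paper's $z_1^{p-1}z_2$). Your explicit identification of the $-p\mathbf{1}$ graded piece of $H^m_{(x_1,\dots,x_m)}(R/pR)$ with $V/sV$ is a tidy way to organize what the paper carries out by contradiction from the vanishing criterion, with an auxiliary exponent $k$ and a homogeneous-coefficient analysis before cancellation.
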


\begin{proof}
Set $S\colonequals\ZZ[x_1,\dots,x_{2m}]$. Fix $p$, and let $\phi_p$ be the standard $p$-derivation on $S$ with respect to $\bsx$. In view of Theorem~\ref{theorem:lifting:trace} and Lemma~\ref{lemma:mainlemma}, it suffices to prove that the element
\[
\left[\frac{\phi_p(\sum_{i=1}^m x_ix_{m+i})}{(x_1\cdots x_m)^p}\right]\ \in\ H^m_{(x_1,\dots,x_m)}(R/pR)
\]
is nonzero. Indeed, if it were zero, then there exists an integer $k\ge0$ such that
\[
\phi_p(\sum_{i=1}^m x_ix_{m+i})(x_1\cdots x_m)^k\ \in\ \big(x_1^{p+k},\ \dots,\ x_m^{p+k}\big)R/pR.
\]
The image of $\phi_p(\sum x_ix_{m+i})$ in $R$ equals $\frac{1}{p}\sum x_i^p x_{m+i}^p$, regarded as an element of $R$, giving
\begin{equation}
\label{equation:quadratic:1}
\Big(\frac{1}{p}\sum_{i=1}^m x_i^p x_{m+i}^p\Big)(x_1\cdots x_m)^k\ \in\ \big(x_1^{p+k},\ \dots,\ x_m^{p+k}\big)R/pR.
\end{equation}
Let $e_i\in\ZZ^m$ denote the $i$-th standard basis vector. Consider the $\ZZ^m$-grading of $R/pR$ where
\begin{align*}
\deg x_i\ &=\ e_i,\\
\deg x_{m+i}\ &=\ -e_i,
\end{align*}
for $1\le i\le m$. The element on the left hand side in~\eqref{equation:quadratic:1} then has degree $(k,\dots,k)$. Hence, in a homogeneous equation for the ideal membership~\eqref{equation:quadratic:1}, the coefficient of $x_i^{p+k}$ on the right has degree $(k,\dots,k,-p,k,\dots,k)$, and therefore the coefficient must be a multiple of 
\[
x_1^k\cdots x_{i-1}^k x_{i+1}^k\cdots x_m^k x_{m+i}^p,
\]
i.e.,
\[
\Big(\frac{1}{p}\sum_{i=1}^m x_i^p x_{m+i}^p\Big)(x_1\cdots x_m)^k\ \in\ 
\big(x_2^k\cdots x_m^k x_{m+1}^px_1^{p+k},\ \dots,\ x_1^k\cdots x_{m-1}^k x_{2m}^p x_m^{p+k}\big)R/pR.
\]
Canceling the term $(x_1\cdots x_m)^k$, the above display implies that
\begin{equation}
\label{equation:quadratic:2}
\frac{1}{p}\sum_{i=1}^m x_i^p x_{m+i}^p\ \in\ \big((x_1x_{m+1})^p,\ \dots,\ (x_mx_{2m})^p\big)
\end{equation}
in the ring
\[
{(R/pR)}_{(0,\dots,0)}\ =\ \ZZ/p\ZZ[x_1x_{m+1},\dots,x_mx_{2m}]/(\sum_{i=1}^m x_ix_{m+i}).
\]
But ${(R/pR)}_{(0,\dots,0)}$ may be identified with the polynomial ring $\ZZ/p\ZZ[z_1,\dots,z_{m-1}]$, where
\[
z_i\colonequals x_ix_{m+i}\qquad\text{for }\ 1\le i\le m-1,
\]
in which case,~\eqref{equation:quadratic:2} reads
\[
\frac{z_1^p+\dots+z_{m-1}^p+(-1)^p(z_1+\dots+z_{m-1})^p}{p}\ \in\ (z_1^p,\ \dots,\ z_{m-1}^p).
\]
This is readily seen to be false, for example by examining the coefficient of $z_1^{p-1}z_2$.
\end{proof}

We now turn to Pfaffian hypersurfaces. Let $n$ be an even integer with~$n\ge 4$, and let~$Z$ be an $(n-2)\times n$ matrix of indeterminates over an infinite field $K$. Set~$T\colonequals K[Z]$. The symplectic group~$\Sp_{n-2}(K)$ acts $K$-linearly on $T$ by the rule
\[
M\colon Z\mapsto MZ\qquad\text{for }\ M\in\Sp_{n-2}(K).
\]
By \cite[\S6]{DCP}, the invariant ring for this action is the $K$-algebra generated by the entries of the product matrix $Z^{\tr}\,\Omega\,Z$, where $\Omega$ is the size $n-2$ standard symplectic block matrix
\[
\begin{pmatrix} 0 & I \\ -I & 0 \end{pmatrix},
\]
with $I$ the identity. This invariant ring is isomorphic to $K[X]/(\pf X)$ for $X$ an~$n \times n$ alternating matrix of indeterminates, and $\pf X$ its Pfaffian. When $K$ has characteristic zero, the ring of differential operators on the invariant ring is described explicitly in \cite[IV~1.9,~Case~C]{LS}.

For $M$ an alternating matrix, the cofactor expansion for Pfaffians takes the form
\[
\pf M\ =\ \sum_{j\ge 2} (-1)^j m_{1j}\pf M_{1j},
\]
where $M_{1j}$ is the submatrix obtained by deleting the first and $j$-th rows and columns. For~$t$ an even integer, we use $\Pf_t(M)$ to denote the ideal generated by the Pfaffians of the~$t\times t$ principal submatrices of $M$. It follows from the cofactor expansion that $\Pf_t(M)\subseteq\Pf_{t-2}(M)$.

The Pfaffian of a $4\times 4$ alternating matrix of indeterminates $X$ is the quadratic form
\[
x_{12}x_{34} - x_{13}x_{24} + x_{14}x_{23},
\]
which, aside from the change of notation, coincides with the case $m=3$ in Theorem~\ref{theorem:quadratic}. More generally one has the following, which completes the proof of Theorem~\ref{theorem:main}~\eqref{main:b} using Theorem~\ref{theorem:lifting:trace} and Lemma~\ref{lemma:mainlemma}.

\begin{theorem}
\label{theorem:pfaffian}
Let $X$ be an $n\times n$ alternating matrix of indeterminates over $\ZZ$, where $n$ is an even integer with $n\ge 4$. Set~$S\colonequals\ZZ[X]$ and~$R\colonequals S/(\pf X)$. Fix a prime integer $p>0$, and let $\phi_p$ be the standard $p$-derivation on $S$ with respect to the indeterminates $X$. Then
\[
\left[\frac{\phi_p(\pf X)}{(x_{12}\cdots x_{1n})^p}\right]\ \in\ H^{n-1}_{(x_{12},\dots,x_{1n})}(R)
\]
is a nonzero $p$-torsion element; moreover, its image in $H^{n-1}_{(x_{12},\dots,x_{1n})}(R/pR)$ is nonzero.

In particular, the local cohomology module $H^{n-1}_{(x_{12},\dots,x_{1n})}(R)$ contains a nonzero $p$-torsion element for each prime integer $p>0$.
\end{theorem}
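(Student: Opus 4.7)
The $p$-torsion claim is a direct computation: from $p\phi_p(\pf X) = \Lambda_p(\pf X) - (\pf X)^p$ and the cofactor expansion $\Lambda_p(\pf X) = \sum_{j=2}^n (-1)^j x_{1j}^p \Lambda_p(\pf X_{1j})$, one finds that $p$ times our class is represented by a sum whose $j$-th term has numerator divisible by $x_{1j}^p$; each such term is zero in the top local cohomology $H^{n-1}_{(x_{12},\dots,x_{1n})}(R)$. Moreover, nonvanishing of the image in $H^{n-1}_{(x_{12},\dots,x_{1n})}(R/pR)$ forces nonvanishing in $H^{n-1}_{(x_{12},\dots,x_{1n})}(R)$ via the long exact sequence arising from $0 \to R \xrightarrow{p} R \to R/pR \to 0$, so the plan is to focus on the mod-$p$ statement.

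For $n=4$, the Pfaffian $\pf X = x_{12}x_{34} - x_{13}x_{24} + x_{14}x_{23}$ becomes, after the $\ZZ$-algebra automorphism $x_{24}\mapsto -x_{24}$, the quadratic form $\sum_{i=1}^3 x_ix_{m+i}$ with $m=3$ appearing in Theorem~\ref{theorem:quadratic}. Since this automorphism fixes the ideal $(x_{12},x_{13},x_{14})$ and commutes with the standard $p$-derivation, the nonvanishing of the class in question reduces directly to Theorem~\ref{theorem:quadratic}.

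For $n\ge 6$, I would reduce to the $n=4$ case by a specialization $\sigma\colon S \to S_4[x_{15},\dots,x_{1n}]$, where $S_4 \colonequals \ZZ[x_{12},x_{13},x_{14},x_{23},x_{24},x_{34}]$, defined by $x_{2k-1,2k} \mapsto 1$ for $3 \le k \le n/2$, $x_{ab}\mapsto 0$ whenever either $a \in \{2,3,4\}$ and $b \ge 5$, or $5 \le a < b \le n$ with $(a,b)$ not one of the fixed pairs, and fixing the remaining indeterminates. A matching analysis of $\pf X = \sum_M\operatorname{sgn}(M)x^M$ shows $\sigma(\pf X) = \pf X_4 \colonequals x_{12}x_{34} - x_{13}x_{24} + x_{14}x_{23}$: any surviving matching must use the fixed pairs to cover $\{5,\dots,n\}$, which forces $\{1,\dots,4\}$ to be matched internally (a matching pairing $1$ with some $j \ge 5$ cannot survive, as $\{2,3,4\}$ would then need an internal perfect matching, impossible since its cardinality is odd); tracking Pfaffian signs shows the three surviving matchings reproduce $\pf X_4$ exactly. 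Hence $\sigma$ descends to a map $R \to R' \cong R_4[x_{15},\dots,x_{1n}]$, where $R_4 = S_4/(\pf X_4)$ is the $n=4$ hypersurface.

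Because $\sigma$ sends each indeterminate to an indeterminate, $0$, or $1$, it commutes with the standard $p$-derivation by \S\ref{subsection:FrobeniusLifting}, so our class maps to $[\phi_p(\pf X_4)/(x_{12}\cdots x_{1n})^p]$ in $H^{n-1}_{(x_{12},\dots,x_{1n})}(R'/pR')$. Working over the field $\ZZ/p\ZZ$, the K\"unneth formula for \v Cech cohomology, together with the vanishing of $H^i_{(x_{12},x_{13},x_{14})}(R_4/pR_4)$ for $i > 3$ and of $H^j_{(x_{15},\dots,x_{1n})}(\ZZ/p\ZZ[x_{15},\dots,x_{1n}])$ for $j \ne n-4$, yields
\[
H^{n-1}_{(x_{12},\dots,x_{1n})}(R'/pR')\ \cong\ H^3_{(x_{12},x_{13},x_{14})}(R_4/pR_4) \otimes_{\ZZ/p\ZZ} H^{n-4}_{(x_{15},\dots,x_{1n})}\big(\ZZ/p\ZZ[x_{15},\dots,x_{1n}]\big),
\]
under which the image of our class is $[\phi_p(\pf X_4)/(x_{12}x_{13}x_{14})^p] \otimes [1/(x_{15}\cdots x_{1n})^p]$. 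The first factor is nonzero by the $n=4$ case above, and the second is the standard generator of the top local cohomology of a polynomial ring, so the tensor, and hence the original class, is nonzero. The main obstacle I anticipate is the matching analysis for $\sigma$: checking both that the surviving matchings reproduce $\pf X_4$ with the correct relative signs (not merely as $\pm x_{12}x_{34} \pm x_{13}x_{24} \pm x_{14}x_{23}$), and that the K\"unneth decomposition above really restricts our class to the claimed tensor factor.
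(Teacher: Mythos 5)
Your approach is essentially the same as the paper's: reduce the vanishing question to the $n=4$ case, which is the quadratic case of Theorem~\ref{theorem:quadratic}, via a specialization that sends most of the off-diagonal entries to $0$ and the pivot entries $x_{2k-1,2k}$ ($k\ge 3$) to $1$. The only genuine divergence in implementation is that the paper proceeds by induction on even $n$, stepping down from $n$ to $n-2$ at each stage and using a regular-sequence argument followed by the specialization $x_{1,n-1},x_{1n}\mapsto 0$ to eliminate the extra first-row variables, whereas you unroll the induction into a single specialization to $R_4[x_{15},\dots,x_{1n}]$ and then invoke the K\"unneth decomposition of \v Cech cohomology over $\ZZ/p\ZZ$ to split off the polynomial ring in $x_{15},\dots,x_{1n}$. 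Both routes work; the K\"unneth argument is a clean alternative to the regular sequence plus further specialization, at the cost of requiring the field case so one should be careful to pass to $R/pR$ first (as you correctly do). Your preliminary reductions — the $p$-torsion calculation via the Pfaffian cofactor expansion, and the reduction of nonvanishing over $\ZZ$ to nonvanishing modulo $p$ — match the paper's and are correct.

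One minor imprecision: your claim that the automorphism $x_{24}\mapsto -x_{24}$ \emph{commutes} with the standard $p$-derivation is false when $p=2$. Indeed, writing $\tau$ for this automorphism, one has $\phi_2(\tau(x_{24}))=\phi_2(-x_{24})=-x_{24}^2$ while $\tau(\phi_2(x_{24}))=\tau(0)=0$; the compatibility from \S\ref{subsection:FrobeniusLifting} only covers specializations of indeterminates to indeterminates, $0$, or $1$, not sign flips. The $n=4$ reduction still goes through for $p=2$: the discrepancy $\phi_2(\pf X_4)-\tau(\phi_2(q))$ is congruent to $x_{13}^2x_{24}^2$ modulo $2$, and the class $[x_{13}^2x_{24}^2/(x_{12}x_{13}x_{14})^2]$ vanishes in $H^3_{(x_{12},x_{13},x_{14})}(R_4/2R_4)$ because the numerator is divisible by $x_{13}^2$. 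The paper treats this point informally ("aside from the change of notation"), but if you want to cite commutation with $\phi_p$, restrict that claim to odd $p$ and handle $p=2$ by the observation above or by noting that the final computation in the proof of Theorem~\ref{theorem:quadratic} is stable under sign changes of the $z_i$.
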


\begin{proof}
As noted above, the case $n=4$ is settled by the proof of Theorem~\ref{theorem:quadratic}. We proceed by induction on $n$, with $n=4$ serving as the base case.

The image of the displayed element is zero in $H^{n-1}_{(x_{12},\dots,x_{1n})}(R/pR)$ if and only if there exists an integer $k\ge0$, such that in the polynomial ring $S$ one has
\begin{equation}
\label{equation:pfaffian}
\phi_p(\pf X) (x_{12}\cdots x_{1n})^k\ \in\ \big(p,\ \pf X,\ x_{12}^{p+k},\ \dots,\ x_{1n}^{p+k}\big)S.
\end{equation}
We know this cannot happen for $n=4$. Suppose~\eqref{equation:pfaffian} holds for an even integer~$n\ge 6$. Specialize $x_{n-1,n} \mapsto 1$, and $x_{ij}\mapsto 0$ for $i=2,\dots,n-2$ and $j=n-1,n$, in which case, the image of $X$ is
\[
\begin{pmatrix}
0 & x_{12} & x_{13} & x_{14} & \hdots & x_{1,n-3} & x_{1,n-2} & x_{1,n-1} & x_{1n}\\
-x_{12} & 0 & x_{23} & x_{24} & \hdots & x_{2,n-3} & x_{2,n-2} & 0 & 0\\
-x_{13} & -x_{23} & 0 & x_{34} & \hdots & x_{3,n-3} & x_{3,n-2} & 0 & 0\\
-x_{14} & -x_{24} & x_{34} & 0 & \hdots & x_{4,n-3} & x_{4,n-2} & 0 & 0\\
\vdots & \vdots & \vdots & \vdots & & \vdots & \vdots & \vdots & \vdots \\
-x_{1,n-3} & -x_{2,n-3} & -x_{3,n-3} & -x_{4,n-3} & \hdots & 0 & x_{n-3,n-2} & 0 & 0\\
-x_{1,n-2} & -x_{2,n-2} & -x_{3,n-2} & -x_{4,n-2} & \hdots & -x_{n-3,n-2}& 0 & 0 & 0\\
-x_{1,n-1} & 0 & 0 & 0 & \hdots & 0 & 0 & 0 & 1\\
-x_{1n} & 0 & 0 & 0 & \hdots & 0 & 0 & -1 & 0\\
\end{pmatrix}.
\]
The upper left $(n-2)\times(n-2)$ submatrix is unchanged; denote this by $X'$. Elementary row and column operations transform the matrix displayed above to
\[
\begin{pmatrix} X' & 0\\ 0 & \Omega \end{pmatrix},
\qquad\text{ where }\
\Omega\colonequals \begin{pmatrix} 0 & 1\\ -1 & 0 \end{pmatrix},
\]
which shows that the Pfaffian of the image of $X$ after specialization equals $\pf X'$. Hence the ideal membership~\eqref{equation:pfaffian} specializes to
\[
\phi_p(\pf X') (x_{12}\cdots x_{1n})^k\ \in\ \big(p,\ \pf X',\ x_{12}^{p+k},\ \dots,\ x_{1n}^{p+k}\big)S',
\]
with $S'$ denoting the image of $S$ under the specialization. The indeterminates $x_{1,n-1}$ and $x_{1n}$ do not occur in the polynomial $\pf X'$, and hence form a regular sequence on
\[
S'/\big(p,\ \pf X',\ x_{12}^{p+k},\ \dots,\ x_{1,n-2}^{p+k}\big)S'.
\]
Using this, we obtain
\[
\phi_p(\pf X') (x_{12}\cdots x_{1,n-2})^k\ \in\ \big(p,\ \pf X',\ x_{12}^{p+k},\ \dots,\ x_{1,n-2}^{p+k},\ x_{1,n-1}^p,\ x_{1n}^p\big)S'.
\]
Next specialize $x_{1,n-1}\mapsto 0$ and $x_{1n}\mapsto 0$, so as to obtain 
\[
\phi_p(\pf X') (x_{12}\cdots x_{1,n-2})^k\ \in\ \big(p,\ \pf X',\ x_{12}^{p+k},\ \dots,\ x_{1,n-2}^{p+k}\big)S'',
\]
where $S''$ is the image of $S'$ under the specialization, equivalently, the polynomial ring over~$\ZZ$ in the indeterminates occurring in $X'$. But this violates the inductive hypothesis.
\end{proof}

We next show that Pfaffian hypersurfaces do not admit a lift of Frobenius modulo $p^2$:

\begin{theorem}
\label{theorem:pfaffian:frobenius}
Let $X$ be an $n\times n$ alternating matrix of indeterminates over $\ZZ$, where $n$ is an even integer with $n\ge 4$. Set~$S\colonequals\ZZ[X]$ and~$R\colonequals S/(\pf X)$. Fix a prime integer $p>0$. Then the Frobenius endomorphism on $R/pR$ does not lift to an endomorphism of $R/p^2R$.
\end{theorem}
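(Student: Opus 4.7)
By Proposition~\ref{proposition:zdanowicz} (applicable because $R/p^2R$ is flat over $\ZZ/p^2\ZZ$), lifting the Frobenius from $R/pR$ to $R/p^2R$ is equivalent to the ideal membership
\[
\phi_p(\pf X)\ \in\ \big(p,\ \pf X,\ (\pf X_{ij})^p : 1 \le i < j \le n\big)S,
\]
where we use $\partial\pf X/\partial x_{ij}=\pm\pf X_{ij}$ from the Pfaffian cofactor expansion. The plan is to show that this membership fails, by induction on the even integer $n\ge 4$, with an inductive step that reuses the specialization already appearing in the proof of Theorem~\ref{theorem:pfaffian}.

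For the inductive step $n\ge 6$, I would assume the membership holds and apply the $\ZZ$-algebra homomorphism $\Sigma\colon S\to T$ that sends $x_{n-1,n}\mapsto 1$, $x_{i,n-1},x_{i,n}\mapsto 0$ for $2\le i\le n-2$, and fixes the remaining indeterminates, so that $T=\ZZ[X'][x_{1,n-1},x_{1,n}]$ with $X'$ the upper-left $(n-2)\times(n-2)$ alternating block. The computation in Theorem~\ref{theorem:pfaffian}'s proof gives $\Sigma(\pf X)=\pf X'$; an analogous combinatorial Pfaffian analysis of the block structure yields $\Sigma(\pf X_{ij})=\pf X'_{ij}$ for $1\le i<j\le n-2$, $\Sigma(\pf X_{n-1,n})=\pf X'$, $\Sigma(\pf X_{1,n-1})=\Sigma(\pf X_{1,n})=0$, and $\Sigma(\pf X_{i,n-1})=\pm x_{1,n}\pf X'_{1,i}$, $\Sigma(\pf X_{i,n})=\pm x_{1,n-1}\pf X'_{1,i}$ for $2\le i\le n-2$. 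After raising to the $p$-th power, the last two families are multiples of $(\pf X'_{1,i})^p$ and $(\pf X')^p$ is a multiple of $\pf X'$, so the image $\Sigma$ of the Zdanowicz ideal of $X$ equals the extension to $T$ of the Zdanowicz ideal of $X'$. Since the standard $p$-derivation is compatible with $\Sigma$, one has $\Sigma(\phi_p(\pf X))=\phi_p(\pf X')$, and faithful flatness of the polynomial extension $\ZZ[X']\into T$ contracts this to a Zdanowicz membership for $X'$, contradicting the inductive hypothesis.

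For the base case $n=4$, one has $\pf X=x_{12}x_{34}-x_{13}x_{24}+x_{14}x_{23}$ and each $\pf X_{ij}$ is, up to sign, one of the six variables, so the Zdanowicz ideal is $(p,\pf X,x_{ij}^p:i<j)S$. I would equip $S$ with the $\ZZ^3$-grading in which $\deg x_{12}=-\deg x_{34}=e_1$, $\deg x_{13}=-\deg x_{24}=e_2$, $\deg x_{14}=-\deg x_{23}=e_3$; then $\pf X$ and $\phi_p(\pf X)$ are homogeneous of degree $(0,0,0)$, whose subring is $\ZZ[u,v,w]$ with $u=x_{12}x_{34}$, $v=x_{13}x_{24}$, $w=x_{14}x_{23}$, and the degree-$(0,0,0)$ part of the Zdanowicz ideal is $(p,\,u-v+w,\,u^p,\,v^p,\,w^p)\ZZ[u,v,w]$. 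A direct computation in $\ZZ[u,v,w]$ gives
\[
\phi_p(\pf X)\ =\ \frac{u^p+(-v)^p+w^p-(u-v+w)^p}{p};
\]
substituting $u=v-w$ and reducing modulo $p$, using $(v-w)^p\equiv v^p-w^p\pmod p$ and Wilson's theorem, yields a nonzero element of $\ZZ/p\ZZ[v,w]/(v^p,w^p)$: for $p$ odd one gets $\sum_{k=1}^{p-1}k^{-1}v^kw^{p-k}$, whose coefficient of $v^{p-1}w$ is $-1$, while for $p=2$ a direct check gives $\phi_2(\pf X)\equiv uv-uw+vw$, which under $u=v+w$ becomes $vw$ modulo $(v^2,w^2)$.

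The main technical obstacle is the base case: while the reduction to $\ZZ[u,v,w]$ via the multi-grading is clean, certifying that the residue of $\phi_p(\pf X)$ is nonzero requires careful tracking of multinomial coefficients and separate verification for $p=2$. The inductive step, although computationally intricate in its Pfaffian bookkeeping, is essentially routine once the correct partial specialization is identified and the simplifications of the images $\Sigma(\pf X_{ij})$ are recorded.
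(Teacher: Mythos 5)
Your proposal is correct and follows essentially the same route as the paper: Zdanowicz's criterion, induction via the specialization from the Pfaffian torsion theorem, and a multigrading reduction of the base case $n=4$ to a small polynomial ring where the ideal membership visibly fails. One small slip: the displayed formula should read $\phi_p(\pf X)=\frac{u^p-v^p+w^p-(u-v+w)^p}{p}$ rather than $u^p+(-v)^p+w^p-\cdots$, which matters only when $p=2$; the discrepancy is a term $v^2$ that is absorbed by $(v^2,w^2)$, so your separate $p=2$ check and the final conclusion are unaffected.
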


\begin{proof}
We proceed by induction on even integers $n\ge 4$, along the same lines as in the proof of Theorem~\ref{theorem:pfaffian}. The case $n=4$ is the hypersurface $R$ defined by 
\[
x_{12}x_{34} - x_{13}x_{24} + x_{14}x_{23}.
\]
While the case of diagonal quadratic forms of rank at least $5$, in odd characteristic, is covered by \cite[Theorem~4.15]{Zdanowicz}, we give a different argument to avoid characteristic restrictions. Suppose that the Frobenius endomorphism on $R/pR$ lifts to $R/p^2R$. Then, by Proposition~\ref{proposition:zdanowicz},
one has
\begin{equation}
\label{equation:quadratic:frobenius}
\frac{1}{p}\big(x_{12}^px_{34}^p - x_{13}^px_{24}^p + x_{14}^px_{23}^p\big)\ \in\ 
\big(x_{12}^p,\ x_{13}^p,\ x_{14}^p,\ x_{23}^p,\ x_{24}^p,\ x_{34}^p\big)R/pR.
\end{equation}
Using the grading
\begin{alignat*}3
\deg x_{12}\ &=\ e_1, \qquad\qquad & \deg x_{34}\ &=\ -e_1,\\
\deg x_{13}\ &=\ e_2, \qquad\qquad & \deg x_{24}\ &=\ -e_2,\\
\deg x_{14}\ &=\ e_3, \qquad\qquad & \deg x_{23}\ &=\ -e_3,
\end{alignat*}
as in the proof of Theorem~\ref{theorem:quadratic}, we may work in the subring
\[
{(R/pR)}_{(0,0,0)}\ =\ \ZZ/p\ZZ[x_{12}x_{34}, \ x_{13}x_{24},\ x_{14}x_{23} ]/(x_{12}x_{34}-x_{13}x_{24}+x_{14}x_{23}),
\]
which, in turn, may be identified with the polynomial ring $\ZZ/p\ZZ[z_1,\ z_2]$, where
\[
z_1\colonequals x_{12}x_{34}\quad \text{ and }\quad z_2\colonequals x_{13}x_{24}. 
\]
But then~\eqref{equation:quadratic:frobenius} reads
\[
\frac{1}{p}\big(z_1^p - z_2^p + (z_2-z_1)^p\big)\ \in\ \big(z_1^p,\ z_2^p\big)R/pR,
\]
which is seen to be false by examining the coefficient of $z_1^{p-1}z_2$.

For the inductive step, let $R\colonequals S/(\pf X)$ be the hypersurface defined by the Pfaffian of an $n\times n$ alternating matrix $X$, for an even integer $n\ge 6$. Suppose that the Frobenius endomorphism on $R/pR$ lifts to $R/p^2R$. Then, by Proposition~\ref{proposition:zdanowicz},
\[
\phi_p(\pf X)\ \in\ \bigg(\Big(\frac{\partial\pf X}{\partial x_{ij}}\Big)^p : 1\le i<j\le n \bigg)\ S/(p,\,\pf X)S.
\]
Each partial derivative $\partial\pf X/\partial x_{ij}$ is, up to sign, the Pfaffian of the $(n-2)\times (n-2)$ submatrix of $X$ obtained by deleting rows $i,j$ and columns $i,j$. Using the notation
\[
\fraka^{[p]}\colonequals (a^p\mid a\in\fraka)
\]
for ideals $\fraka$ in a ring of prime characteristic $p>0$, the preceding ideal membership may hence be written as 
\[
\phi_p(\pf X)\ \in\ \Pf_{n-2}(X)^{[p]}\ S/(p,\,\pf X)S.
\]
Applying the specialization in the proof of Theorem~\ref{theorem:pfaffian}, $X$ may be replaced by 
\[
X''\colonequals \begin{pmatrix} X' & 0\\ 0 & \Omega \end{pmatrix},
\qquad\text{ where }\ 
\Omega\colonequals\begin{pmatrix} 0 & 1\\ -1 & 0 \end{pmatrix},
\]
implying that
\[
\phi_p(\pf X')\ \in\ \Pf_{n-2}(X'')^{[p]}\ S'/(p,\,\pf X')S'.
\]
But
\[
\Pf_{n-2}(X'') \ =\ \Pf_{n-4}(X')\ =\ \bigg(\Big(\frac{\partial\pf X'}{\partial x_{ij}}\Big): 1\le i<j\le n-2 \bigg),
\]
contradicting the inductive hypothesis.
\end{proof}

\section{Determinantal hypersurfaces}
\label{section:generic:determinant}

Let $K$ be an infinite field. Let $Y$ and $Z$ be $n \times (n-1)$ and $(n-1) \times n$ matrices of indeterminates respectively, and set $T\colonequals K[Y,Z]$. The general linear group $\GL_{n-1}(K)$ acts~$K$-linearly on $T$ where, for $M\in\GL_{n-1}(K)$, one has
\[
M\colon\begin{cases} Y & \mapsto YM^{-1}\\ Z & \mapsto MZ.\end{cases}
\]
By \cite[\S3]{DCP}, the invariant ring for this action is generated over $K$ by the entries of the product matrix $YZ$. This invariant ring is isomorphic to $K[X]/(\det X)$, where $X$ is an $n\times n$ matrix of indeterminates. When $K$ has characteristic zero, the ring of differential operators on the invariant ring is described explicitly in \cite[IV~1.9,~Case~A]{LS}.

We complete the proof of Theorem~\ref{theorem:main}~\eqref{main:a}. In view of Theorem~\ref{theorem:lifting:trace} and Lemma~\ref{lemma:mainlemma}, it suffices to prove:

\begin{theorem}
\label{theorem:determinant}
Let $X$ be an $n\times n$ matrix of indeterminates over $\ZZ$, where $n\ge 3$. Fix a prime integer $p>0$, and let $\phi_p$ be the standard $p$-derivation on $S\colonequals\ZZ[X]$ with respect to~$X$. Take~$\fraka$ to be the ideal~$(x_{12},\dots,x_{1n},x_{22},\dots,x_{2n})\,R$, where $R\colonequals S/(\det X)$. Then
\[
\left[\frac{\phi_p(\det X)}{(x_{12}\cdots x_{1n}x_{22}\cdots x_{2n})^p}\right]\ \in\ H^{2n-2}_\fraka(R)
\]
is a nonzero $p$-torsion element; moreover, its image in $H^{2n-2}_\fraka(R/pR)$ is nonzero.

In particular, the local cohomology module $H^{2n-2}_\fraka(R)$ contains a nonzero~$p$-torsion element for each prime integer $p>0$.
\end{theorem}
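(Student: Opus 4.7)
The plan is to mirror the argument in Theorem~\ref{theorem:pfaffian}: first verify that the class is $p$-torsion by a direct computation, and then show that its image in $H^{2n-2}_\fraka(R/pR)$ is nonzero by induction on $n\ge 3$. To see the $p$-torsion, since $(\det X)^p=0$ in $R$ one has
\[
p\left[\frac{\phi_p(\det X)}{(x_{12}\cdots x_{2n})^p}\right]\ =\ \left[\frac{\det(X^{[p]})}{(x_{12}\cdots x_{2n})^p}\right]
\]
in $H^{2n-2}_\fraka(R)$, where $X^{[p]}$ denotes the matrix with entries $x_{ij}^p$. Expanding $\det(X^{[p]})$ over the symmetric group and using that no permutation $\sigma\in S_n$ satisfies $\sigma(1)=\sigma(2)=1$, every term is divisible by some $x_{1j}^p$ or $x_{2j}^p$ with $j\ge 2$; hence $\det(X^{[p]})\in(x_{12}^p,\ldots,x_{2n}^p)\,S$ and the displayed class vanishes.

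To prove the image is nonzero in $H^{2n-2}_\fraka(R/pR)$ it suffices to show that, for each $k\ge 0$, the ideal containment
\begin{equation*}
\phi_p(\det X)(x_{12}\cdots x_{2n})^k\ \in\ (p,\ \det X,\ x_{12}^{p+k},\ \ldots,\ x_{2n}^{p+k})\,S \tag{$\star$}
\end{equation*}
fails. Suppose $(\star)$ holds for some $k$ and some $n\ge 4$. I would apply the $\ZZ$-algebra homomorphism $\Upsilon\colon S\to S'$ that sends $x_{nn}\mapsto 1$, $x_{nj}\mapsto 0$ for $j<n$, and $x_{in}\mapsto 0$ for $3\le i\le n-1$, while retaining $x_{1n}$, $x_{2n}$, and the entries of the upper-left $(n-1)\times(n-1)$ submatrix $X'$ as indeterminates. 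Expanding $\det X$ along its last row gives $\Upsilon(\det X)=\det X'$, which is free of $x_{1n}$ and $x_{2n}$; since the standard $p$-derivation commutes with such variable-to-variable specializations, $(\star)$ specializes accordingly. In the resulting ring, $x_{1n}$ and $x_{2n}$ are algebraically independent and form a regular sequence modulo the other ideal generators. Extracting the coefficient of $x_{1n}^k x_{2n}^k$ as a polynomial in $x_{1n},x_{2n}$---noting that the generators $x_{1n}^{p+k}$ and $x_{2n}^{p+k}$ cannot contribute to this monomial since $p>0$---and then specializing $x_{1n},x_{2n}\mapsto 0$ yields the analogue of $(\star)$ for $X'$, contradicting the inductive hypothesis at size $n-1$.

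The base case $n=3$ is the anticipated main obstacle. My approach here is to equip $S=\ZZ[X]$ with the fine bigrading $\deg x_{ij}=(e_i,f_j)\in\NN^3\oplus\NN^3$, under which $\det X$ and each generator of the ideal in $(\star)$ is bihomogeneous. The left-hand side of $(\star)$ lies in bidegree $((p+2k,p+2k,p),(p,p+2k,p+2k))$, and the corresponding cofactors of the $x_{ij}^{p+k}$ and of $\det X$ are thereby constrained to be supported on monomials whose exponent matrices satisfy specific row- and column-sum (transportation) conditions. The plan is then to track the coefficients of strategically chosen monomials appearing in $\phi_p(\det X)=(\det(X^{[p]})-(\det X)^p)/p$---for instance mixed products of the form $m_\sigma^{p-1}m_\tau$ with $m_\sigma=\prod_i x_{i,\sigma(i)}$ arising from the multinomial expansion of $(\det X)^p$---and thereby extract an integer identity whose reduction modulo $p$ is inconsistent with $(\star)$. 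Managing this combinatorics cleanly, particularly for the small primes $p=2$ and $p=3$, is the main technical difficulty I anticipate in completing the proof.
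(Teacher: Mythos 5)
Your proposal correctly handles the easy parts of the theorem but stops short of the substantive core, and you acknowledge this yourself. Let me be precise about what is and is not in place.

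The $p$-torsion verification is correct (and in fact fills in a detail the paper leaves to the reader): since $p\,\phi_p(\det X)=\det(X^{[p]})-(\det X)^p$, and $(\det X)^p$ vanishes in $R$, one has $p\,\phi_p(\det X)=\det(X^{[p]})$ in $R$; every Leibniz term $\prod_i x_{i,\sigma(i)}^p$ has at most one of $\sigma(1),\sigma(2)$ equal to $1$, hence lies in $\fraka^{[p]}R$, so the class is killed by $p$. The inductive reduction from $n$ to $n-1$ is also sound: specializing the last row to $(0,\dots,0,1)$ and $x_{in}\mapsto 0$ for $3\le i\le n-1$ sends $\det X$ to $\det X'$ and commutes with $\phi_p$ (per \S\ref{subsection:FrobeniusLifting}), after which $x_{1n},x_{2n}$ are fresh indeterminates not appearing in $\det X'$ and one can extract the $x_{1n}^k x_{2n}^k$-coefficient to drop them. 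This differs mildly from the paper, which reduces directly to $n=3$ by specializing the trailing $(n-3)\times(n-3)$ block to the identity, but the two routes are equivalent.

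The genuine gap is the base case $n=3$, which is the heart of the theorem, and your sketch does not carry it. The row--column $\ZZ^3\oplus\ZZ^3$ grading you propose constrains the cofactors, but it does not by itself eliminate them; one still has to show the multigraded ideal membership actually fails, and this is where all the work lies. The paper's argument requires several further ideas you have not found: an initial specialization $x_{31}\mapsto 0$ that makes $\det X$ linear in the four variables $x_{12},x_{13},x_{22},x_{23}$; a tailor-made $\ZZ^5$-grading (not the natural row/column one) under which the cofactors of $x_{ij}^{p+k}$ are forced to be monomial multiples of specific $k$-th powers; a further specialization to $\pm 1$ landing in a polynomial ring $B=\ZZ/p\ZZ[x_{13},x_{22},x_{23}]$; a $B^p$-linear projection $\pi$ (a Cartier-type coefficient extraction with respect to a Frobenius basis) that converts the failed membership into a purely Frobenius-power statement in $B^p$; and finally a chain of regular-sequence cancellations terminating in the absurdity $1\in(x_{12}^p,x_{13}^p,x_{22}^p,x_{23}^p)$. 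Your plan to ``track coefficients of strategically chosen monomials'' gestures at this but contains none of these mechanisms, and you explicitly flag $p=2,3$ as unresolved; as written, the proof of the base case is missing.
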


\begin{proof}
We first show that the proof of the theorem reduces to the case $n=3$. Fix $p$. Suppose that the image of the displayed element in $H^{2n-2}_\fraka(R/pR)$ is zero. Then there exists an integer~$k\ge0$ such that in the polynomial ring $S$ one has
\begin{equation}
\label{equation:generic:determinant:1}
\phi_p(\det X) (x_{12}\cdots x_{1n}x_{22}\cdots x_{2n})^k\ \in\ 
\big(p,\ \det X,\ x_{12}^{p+k},\ \dots,\ x_{1n}^{p+k},\ x_{22}^{p+k},\ \dots,\ x_{2n}^{p+k})S.
\end{equation}
For $i=4,\dots,n$, specialize $x_{ii} \mapsto 1$ and $x_{ij} \mapsto 0$ for $j\neq i$. The image of $X$ after specialization has the form
\[
\begin{pmatrix}
X' & * \\
0 & I
\end{pmatrix},
\]
where $X'$ denotes the upper left $3\times 3$ submatrix of $X$, and~$I$ is the size~$n-3$ identity matrix. Note that $\det X$ specializes to $\det X'$, and~$\phi_p(\det X)$ to $\phi_p(\det X')$. Hence the ideal membership~\eqref{equation:generic:determinant:1} specializes to
\[
\phi_p(\det X') (x_{12}\cdots x_{1n}x_{22}\cdots x_{2n})^k\ \in\ \big(p,\ \det X',\ x_{12}^{p+k},\ \dots,\ x_{1n}^{p+k},\ x_{22}^{p+k},\ \dots,\ x_{2n}^{p+k}\big)S',
\]
with $S'$ the image of $S$. Since the indeterminates $x_{14},\dots,x_{1n},x_{24},\dots,x_{2n}$ do not occur in the polynomials $\det X'$ and $\phi_p(\det X')$, the ideal membership above implies
\[
\phi_p(\det X') (x_{12}x_{13}x_{22}x_{23})^k\ \in\ \big(p,\ \det X',\ x_{12}^{p+k},\ x_{13}^{p+k},\ x_{22}^{p+k},\ x_{23}^{p+k}\big)S'.
\]
Specializing $x_{ij}\mapsto 0$ for $i\ge 4$ and also for $j\ge 4$, we reduce to the case $n=3$; specifically, it suffices to consider
\[
X\colonequals\begin{pmatrix}
x_{11} & x_{12} & x_{13}\\
x_{21} & x_{22} & x_{23}\\
x_{31} & x_{32} & x_{33}
\end{pmatrix}
\]
and $S=\ZZ[X]$, in which case~\eqref{equation:generic:determinant:1} reads
\begin{equation}
\label{equation:generic:determinant:2}
\phi_p(\det X) (x_{12}x_{13}x_{22}x_{23})^k\ \in\ 
\big(p,\ \det X,\ x_{12}^{p+k},\ x_{13}^{p+k},\ x_{22}^{p+k},\, x_{23}^{p+k}\big)S.
\end{equation}
Specialize $x_{31}\mapsto 0$. Using $X''$ for the image of $X$, one has
\[
\det X''\ =\ -(x_{21}x_{33})x_{12}\ +\ (x_{21}x_{32})x_{13}\ +\ (x_{11}x_{33})x_{22}\ -\ (x_{11}x_{32})x_{23}.
\]
In the hypersurface
\[
R'\colonequals\ZZ[x_{11},x_{12},x_{13},x_{21},x_{22},x_{23},x_{32},x_{33}]/(\det X''),
\]
set
\[
\lambda\colonequals\frac{1}{p}\big(-x_{21}^px_{33}^px_{12}^p\ +\ x_{21}^px_{32}^px_{13}^p\ +\ x_{11}^px_{33}^px_{22}^p\ -\ x_{11}^px_{32}^px_{23}^p\big).
\]
Then~\eqref{equation:generic:determinant:2} implies that
\[
\lambda(x_{12}x_{13}x_{22}x_{23})^k\ \in\ \big(x_{12}^{p+k},\ x_{13}^{p+k},\ x_{22}^{p+k},\, x_{23}^{p+k}\big)R'/pR'.
\]
Take the $\ZZ^5$-grading on $R'/pR'$ defined by
\[
\begingroup
\setlength{\tabcolsep}{2pt}
\begin{tabular}{lcrrrrr}
$\deg\ x_{11}$ &= & ($0$, & $0$, & $0$, & $1$, & $0$) \\
$\deg\ x_{12}$ &= & ($1$, & $0$, & $-1$, & $0$, & $-1$) \\
$\deg\ x_{13}$ &= & ($1$, & $-1$, & $0$, & $0$, & $-1$) \\
$\deg\ x_{21}$ &= & ($0$, & $0$, & $0$, & $0$, & $1$) \\
$\deg\ x_{22}$ &= & ($1$, & $0$, & $-1$, & $-1$, & $0$) \\
$\deg\ x_{23}$ &= & ($1$, & $-1$, & $0$, & $-1$, & $0$) \\
$\deg\ x_{32}$ &= & ($0$, & $1$, & $0$, & $0$, & $0$) \\
$\deg\ x_{33}$ &= & ($0$, & $0$, & $1$, & $0$, & $0$) \\
\end{tabular}
\endgroup
\]
and note that $\lambda$ is homogeneous of degree $(p,0,0,0,0)$. Hence
\[
\deg\lambda(x_{12}x_{13}x_{22}x_{23})^k\ =\ (p+4k,\ -2k,\ -2k,\ -2k,\ -2k).
\]
Fix a homogeneous equation
\[
\lambda(x_{12}x_{13}x_{22}x_{23})^k\ =\ \alpha x_{12}^{p+k} + \beta x_{13}^{p+k} + \gamma x_{22}^{p+k} + \delta x_{23}^{p+k}
\]
with $\alpha,\beta,\gamma,\delta$ in $R'/pR'$. The element $\alpha$ has degree
\[
(3k,\ -2k,\ p-k,\ -2k,\ p-k).
\]
Let $\mu$ be a monomial of the above degree. Examining the second and fourth components of the degree, the exponents on $x_{13}$ and $x_{23}$ in $\mu$ add up to at least $2k$, as do the exponents on $x_{22}$ and $x_{23}$. Bear in mind the first component, $\mu$ must be a multiple of $x_{23}^k$. A similar analysis for $\beta$, $\gamma$, and $\delta$ shows that
\[
\lambda(x_{12}x_{13}x_{22}x_{23})^k\ \in\ \big(x_{23}^kx_{12}^{p+k},\ x_{22}^kx_{13}^{p+k},\ x_{13}^kx_{22}^{p+k},\ x_{12}^kx_{23}^{p+k}\big)R'/pR'.
\]
Next, specialize $x_{11}$ and $x_{33}$ to $1$ and $x_{21}$ and $x_{32}$ to $-1$, in which case one has
\begin{equation}
\label{equation:generic:determinant:3}
\lambda'(x_{12}x_{13}x_{22}x_{23})^k\ \in\ \big(x_{23}^kx_{12}^{p+k},\ x_{22}^kx_{13}^{p+k},\ x_{13}^kx_{22}^{p+k},\ x_{12}^kx_{23}^{p+k}\big)B,
\end{equation}
where $\lambda'$ denote the image of $\lambda$ under the specialization, and $B$ is the image of $R'/pR'$, i.e.,
\[
B\colonequals\ZZ/p\ZZ[x_{12},x_{13},x_{22},x_{23}]/(x_{12}+x_{13}+x_{22}+x_{23}),
\]
which we identify with the polynomial ring $\ZZ/p\ZZ[x_{13},x_{22},x_{23}]$. With this identification,
\begin{align*}
\lambda'\ =\ \frac{1}{p}\big(x_{12}^p + x_{13}^p + x_{22}^p + x_{23}^p\big)\
& =\ \frac{1}{p}\big((-x_{13}-x_{22}-x_{23})^p + x_{13}^p + x_{22}^p + x_{23}^p\big)\\
& =\ \pm x_{13}^{p-1}x_{22}+\cdots
\end{align*}
is a polynomial in $\ZZ/p\ZZ[x_{13},x_{22},x_{23}]$ in which each indeterminate occurs with exponents strictly less than $p$. The ring $B$ is a free module over its subring
\[
B^p=\ZZ/p\ZZ[x_{13}^p,\ x_{22}^p,\ x_{23}^p],
\]
with a basis given by monomials in $x_{13},\ x_{22},\ x_{23}$, with each exponent less than $p$. Let
\[
\pi\colon B\to B^p
\]
be the $B^p$-linear map that sends $x_{13}^{p-1}x_{22}$ to $1$ and other basis elements to $0$. Specifically,
\[
\pi(\lambda')=\pm 1.
\]
Consider~\eqref{equation:generic:determinant:3} where, without loss of generality, the exponent $k$ is taken to be a power of~$p$, and apply $\pi$. Then, in the ring $B^p$, one has
\begin{equation}
\label{equation:generic:determinant:4}
(x_{12}x_{13}x_{22}x_{23})^k\ \in\ \big(x_{23}^kx_{12}^{p+k},\ x_{22}^kx_{13}^{p+k},\ x_{13}^kx_{22}^{p+k},\ x_{12}^kx_{23}^{p+k}\big),
\end{equation}
where we retain the notation $x_{12}^p=-(x_{13}^p+x_{22}^p+x_{23}^p)$ for the sake of symmetry; note that~$B^p$ may be regarded as a polynomial ring in any \emph{three} of the elements
\[
x_{12}^p,\ x_{13}^p,\ x_{22}^p,\ x_{23}^p.
\]
The ideal membership~\eqref{equation:generic:determinant:4} implies the existence of elements $a,b,c,d$ in $B^p$ with
\[
(x_{12}x_{13}x_{22}x_{23})^k\ =\ ax_{23}^kx_{12}^{p+k}+ bx_{22}^kx_{13}^{p+k} + cx_{13}^kx_{22}^{p+k} + dx_{12}^kx_{23}^{p+k}.
\]
Rearranging terms, one has
\[
x_{12}^k((x_{13}x_{22}x_{23})^k - ax_{23}^kx_{12}^p - dx_{23}^{p+k})\ \in\ (x_{22}^kx_{13}^{p+k},\ x_{13}^kx_{22}^{p+k}).
\]
But $x_{12}^k$ is a nonzerodivisor in the ring $B^p/(x_{22}^kx_{13}^{p+k},\ x_{13}^kx_{22}^{p+k})$, so the above implies that
\[
(x_{13}x_{22}x_{23})^k\ \in\ (x_{23}^kx_{12}^p,\ x_{22}^kx_{13}^{p+k},\ x_{13}^kx_{22}^{p+k},\ x_{23}^{p+k}).
\]
Similarly, using that $x_{13}^k$ is a nonzerodivisor modulo in $B^p/(x_{23}^kx_{12}^p,\ x_{23}^{p+k})$, one obtains
\[
(x_{22}x_{23})^k\ \in\ (x_{23}^kx_{12}^p,\ x_{22}^kx_{13}^p,\ x_{22}^{p+k},\ x_{23}^{p+k}).
\]
Continuing, $x_{22}^k$ is a nonzerodivisor in $B^p/(x_{23}^kx_{12}^p,\ x_{23}^{p+k})$, yielding
\[
x_{23}^k\ \in\ (x_{23}^kx_{12}^p,\ x_{13}^p,\ x_{22}^p,\ x_{23}^{p+k}),
\]
and finally, with $x_{23}^k$ being a nonzerodivisor in $B^p/(x_{13}^p,\ x_{22}^p)$, one obtains the contradiction
\[
1\ \in\ (x_{12}^p,\ x_{13}^p,\ x_{22}^p,\ x_{23}^p).\qedhere
\]
\end{proof}

Determinantal hypersurfaces, in general, do not admit a lift of Frobenius modulo~$p^2$:

\begin{theorem}
\label{theorem:determinant:frobenius}
Let $X$ be an $n\times n$ matrix of indeterminates over $\ZZ$, where $n\ge 3$. Fix a prime integer $p>0$. Set~$S\colonequals\ZZ[X]$ and~$R\colonequals S/(\det X)$. Then the Frobenius endomorphism on~$R/pR$ does not lift to an endomorphism of $R/p^2R$.
\end{theorem}

\begin{proof}
As in the proof of Theorem~\ref{theorem:determinant}, one first reduces to the case $n=3$ as follows. Suppose that the Frobenius endomorphism on $R/pR$ lifts to $R/p^2R$. Then, by Proposition~\ref{proposition:zdanowicz},
\[
\phi_p(\det X)\ \in\ \bigg(\Big(\frac{\partial\det X}{\partial x_{ij}}\Big)^p : 1\le i,j\le n \bigg)\ S/(p,\,\det X)S.
\]
Each partial derivative above is, up to sign, the determinant of an $(n-1)\times (n-1)$ submatrix of $X$, so the above may be restated as
\begin{equation}
\label{equation:frobenius:determinant1}
\phi_p(\det X)\ \in\ I_{n-1}(X)^{[p]}\ S/(p,\,\det X)S,
\end{equation}
where $I_{n-1}(X)$ denotes the ideal generated by the size $n-1$ minors of $X$. Applying the specialization $S\to S'$ in the proof of Theorem~\ref{theorem:determinant}, one has
\[
X\mapsto \begin{pmatrix}X' & *\\ 0 & I\end{pmatrix},
\]
with $I$ denoting the identity matrix of size $n-3$. Hence $I_{n-1}(X)S'=I_2(X')$, and the ideal membership~\eqref{equation:frobenius:determinant1} specializes to
\[
\phi_p(\det X')\ \in\ I_2(X')^{[p]}\ S'/(p,\,\det X')S',
\]
which is essentially the $n=3$ case.

Assume $n=3$. Specializing $x_{13}$ and $x_{31}$ to $0$, the resulting matrix
\[
X''\colonequals\begin{pmatrix}
x_{11} & x_{12} & 0\\
x_{21} & x_{22} & x_{23}\\
0 & x_{32} & x_{33}
\end{pmatrix}
\]
has determinant $x_{11}x_{22}x_{33}-x_{11}x_{23}x_{32}-x_{12}x_{21}x_{33}$, and the ideal membership implies
\begin{equation}
\label{equation:frobenius:determinant2}
\phi_p(\det X'')\ \in\ I_2(X'')^{[p]}
\end{equation}
in the ring
\[
\ZZ/p\ZZ[x_{11},\ x_{12},\ x_{21},\ x_{22},\ x_{23},\ x_{32},\ x_{33}]/(\det X'').
\]
Using the $\ZZ^5$-grading in the proof of Theorem~\ref{theorem:determinant}, $\det X''$ has degree~$(p,0,0,0,0)$, so we obtain an ideal membership in the subring generated by elements of degree~$(*,0,0,0,0)$, namely the ring
\begin{multline*}
\ZZ/p\ZZ[x_{11}x_{22}x_{33},\ x_{11}x_{23}x_{32},\ x_{12}x_{21}x_{33}]/(x_{11}x_{22}x_{33}-x_{11}x_{23}x_{32}-x_{12}x_{21}x_{33})\\
\cong\ \ZZ/p\ZZ[x_{11}x_{23}x_{32},\ x_{12}x_{21}x_{33}].
\end{multline*}
Working with the degree of each generator of $I_2(X'')$, the statement~\eqref{equation:frobenius:determinant2} gives
\[
\frac{1}{p}\big((x_{11}x_{23}x_{32}+x_{12}x_{21}x_{33})^p-(x_{11}x_{23}x_{32})^p-(x_{12}x_{21}x_{33})^p\big)\ \in\ \big(x_{11}x_{23}x_{32},\ x_{12}x_{21}x_{33}\big)^{[p]}
\]
in the ring above, which is readily seen to be false.
\end{proof}
	
\section{Symmetric determinantal hypersurfaces}
\label{section:symmetric}

Let $Z$ be an $(n-1) \times n$ matrix of indeterminates over an infinite field $K$. Set~$T\colonequals K[Z]$. The orthogonal group $\O_{n-1}(K)$ acts $K$-linearly on $T$ by the rule
\[
M\colon Z\mapsto MZ\qquad\text{for }\ M\in\O_{n-1}(K).
\]
By \cite[\S5]{DCP}, the invariant ring for this action is the $K$-algebra generated by the entries of the product matrix $Z^{\tr}Z$, and is isomorphic to $K[X]/(\det X)$ for $X$ an $n\times n$ symmetric matrix of indeterminates. When $K$ has characteristic zero, the ring of differential operators on this invariant ring is described explicitly in \cite[IV~1.9~Case~B]{LS}.

\begin{theorem}
\label{theorem:symmetric:3:odd}
Let $X$ be a symmetric $3\times 3$ matrix of indeterminates over $\ZZ$, and set $R$ to be the hypersurface $\ZZ[X]/(\det X)$. Then, for each odd prime integer $p$, the map
\[
D_R\ \to\ D_{R/pR}
\]
is surjective.
\end{theorem}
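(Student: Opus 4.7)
The plan mirrors that of Theorem~\ref{theorem:toric}. By Proposition~\ref{proposition:lift:to:Z}, the cokernel of $D_R\to D_{R/pR}$ is isomorphic to $\ann(p,\,H^6_{\Delta_R}(P_R))$, where the exponent $6$ counts the entries of the symmetric $3\times 3$ matrix~$X$. Hence the theorem is equivalent to showing that $p$ is a nonzerodivisor on $H^6_{\Delta_R}(P_R)$ for each odd prime~$p$. Since both the formation of $H^6_{\Delta_R}(P_R)$ and its $p$-torsion submodule commute with the flat base change $\ZZ\to\ZZ_{(p)}$, I would replace $\ZZ$ by $V\colonequals\ZZ_{(p)}$ and work with $R_V\colonequals R\otimes_\ZZ V$ throughout.

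Next I would invoke the invariant-theoretic presentation of $R_V$: with $2\in V^\times$, the ring $R_V$ is the $\O_2$-invariant subring of the polynomial ring $T\colonequals V[A]$ on the entries of a $2\times 3$ matrix~$A$ of indeterminates, via $x_{ij}\mapsto(A^TA)_{ij}$; a Reynolds operator yields an $R_V$-linear splitting $T=R_V\oplus M$, as indicated in Remark~\ref{remark:dirsum}. Tensoring this splitting with itself over~$V$ gives a $P_{R_V}$-linear decomposition $P_T\cong P_{R_V}\oplus N$, and consequently $H^6_{\Delta_R}(P_{R_V})$ is a $P_{R_V}$-module direct summand of $H^6_{\Delta_R}(P_T)$. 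It therefore suffices to show that $H^6_{\Delta_R}(P_T)$ has no nonzero $p$-torsion. Since $P_T=V[A,B]$ is a polynomial ring over~$V$ and hence a smooth $V$-algebra, Theorem~\ref{theorem:bblsz} reduces this in turn to showing that the Koszul cohomology module $H^6(g_{ij};\,P_T)\cong P_T/(g_{ij})$ is $p$-torsion free, where $g_{ij}\colonequals(B^TB)_{ij}-(A^TA)_{ij}$ are the six quadratic generators of~$\Delta_R P_T$.

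The main obstacle is that the $g_{ij}$ do not form a regular sequence in~$P_T$: the variety $\{A^TA=B^TB\}$ has codimension only $5$ in affine $12$-space, not the expected~$6$. I would circumvent this by exploiting the parametrization $(U,A)\mapsto(A,UA)$ of the rank-$2$ open stratum of~$\{A^TA=B^TB\}$ by $\O_2\times\mathbf{A}^6$: since $\O_2$ is a smooth group scheme over~$V$ when $2\in V^\times$, this gives $V$-flatness of $P_T/(g_{ij})$ after inverting a suitable $2\times 2$ minor of~$A$, and a separate analysis along the rank-$\le 1$ locus—where the variety is covered by the image of a smaller symmetric determinantal scheme—completes the torsion-freeness check. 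The assumption $p>2$ enters decisively at this last step, since in characteristic~$2$ the two connected components of~$\O_2$ coalesce and the parametrization degenerates, in concordance with the failure of the analogous lifting statement in Theorem~\ref{theorem:main}~\eqref{main:c} for~$p=2$.
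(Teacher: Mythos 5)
Your opening reduction — invoking Proposition~\ref{proposition:lift:to:Z}, localizing at $p$, realizing $R$ as the $\O_2$-invariant subring of a polynomial ring $T$ via $X\mapsto A^{\tr}A$, and using the direct-summand property of $R\subset T$ to pass from $H^6_{\Delta_R}(P_R)$ to $H^6_{\Delta_R}(P_T)$ — coincides with what the paper does (modulo the paper's real presentation with $u,\dots,z$ versus the $2\times 3$ matrix $A$; Remark~\ref{remark:dirsum} reconciles these). After that point you diverge, and the divergence has a genuine gap.

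You invoke Theorem~\ref{theorem:bblsz} to replace the local cohomology module $H^6_{\Delta_R}(P_T)$ by the Koszul cohomology module $H^6(g_{ij};\,P_T)\cong P_T/(g_{ij})$ and propose to show the latter has no $p$-torsion. This is indeed a sufficient condition and Theorem~\ref{theorem:bblsz} applies. But, as you note, the $g_{ij}$ do not form a regular sequence — the ideal $(g_{ij})$ has height only $5$ in the $12$-variable polynomial ring $P_T$ — and this is precisely where your argument collapses. The module $P_T/(g_{ij})$ is the coordinate ring of a non-complete-intersection scheme that need not be reduced; any embedded primary components would sit along the rank-$\le1$ locus of $A$, and $p$-torsion can live exactly there. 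Your sketch — flatness on the locus where a $2\times2$ minor of $A$ is inverted, then ``a separate analysis along the rank-$\le1$ locus \dots completes the torsion-freeness check'' — is not a valid reduction: controlling an open stratum and the (reduced) closed complement does not control the $p$-torsion of the full scheme, because the potential torsion lives in the difference between the scheme structure on $P_T/(g_{ij})$ and what is visible on the two pieces. Since Theorem~\ref{theorem:bblsz} is a one-way implication, it is even possible for $H^6_{\Delta_R}(P_T)$ to be $p$-torsion-free while $P_T/(g_{ij})$ is not; you have not ruled out that your reduction is to a false statement. (In the toric Theorem~\ref{theorem:toric} the Koszul quotient is a semigroup ring and hence $\ZZ$-free, which is the special structural fact making that route work there; nothing analogous is available here.)

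The paper's proof avoids the Koszul route altogether. After the direct-summand reduction it shows that in $C\colonequals T_2\otimes_{\ZZ_2}T_2$ the radical of $\Delta$ is an intersection $\frakp\cap\frakq$ of two explicit determinantal primes (verified via $\frakp^2\frakq\subseteq\Delta\subseteq\frakp\cap\frakq$), invokes the Mayer--Vietoris sequence and the fact that $H^k_\frakp(C)=H^k_\frakq(C)=0$ for $k\notin\{5,9\}$ to identify $H^6_\Delta(C)\cong H^7_{\frakp+\frakq}(C)$, and then shows $H^6_{\frakp+\frakq}(C/pC)=0$ by realizing $C/(\frakp+\frakq+pC)$ as a Segre product of polynomial rings and concluding that $\frakp+\frakq$ has height $7$ modulo $p$. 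That Mayer--Vietoris argument is the heart of the proof, and your proposal does not supply a substitute for it.
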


The map displayed above is not surjective when $p=2$, see Theorem~\ref{theorem:symmetric:char:2}. 

\begin{proof}
In view of Proposition~\ref{proposition:lift:to:Z}, it suffices to show that each odd prime $p$ acts injectively on the local cohomology module $H^6_{\Delta_R}(P_R)$. This is unaffected by inverting the integer $2$, so we work instead with the rings $\ZZ_2=\ZZ[1/2]$ and
\[
R_2=\ZZ_2[X]/(\det X).
\]
Let $T\colonequals\ZZ[u,v,w,x,y,z]$ be a polynomial ring, and $T_2=T[1/2]$. It is readily checked that the symmetric matrix
\[
\begin{pmatrix}
2ux & uy+vx & uz+wx\\ uy+vx & 2vy & vz+wy\\ uz+wx & vz+wy & 2wz
\end{pmatrix}
\]
has determinant $0$. By a dimension argument, it follows that $R_2$ is isomorphic to the subring
\[
\ZZ_2[ux,\ vy,\ wz,\ uy+vx,\ uz+wx,\ vz+wy].
\]
of $T_2$ and, indeed, we identify $R_2$ with this subring.

We claim that $R_2$ is a direct summand of~$T_2$ as an $R_2$-module. To see this, first consider the~$\ZZ$-grading on $T_2$ where the indeterminates $u,v,w$ have degree $1$, and $x,y,z$ have degree~$-1$. It follows that the degree $0$ component of $T_2$, i.e., the ring
\[
B\colonequals\ZZ_2[ux,\ uy,\ uz,\ vx,\ vy,\ vz,\ wx,\ wy,\ wz]
\]
is a direct summand of $T_2$ as a $B$-module. Next, let $G\colonequals\langle\sigma\rangle$ be a group of order $2$ acting on $T_2$, where $\sigma$ is the involution with
\[
u\mapsto x,\qquad v\mapsto y,\qquad w\mapsto z.
\]
The action of $G$ on $T_2$ restricts to an action on the subring $B$, and
\[
R_2\ \subseteq\ B^G.
\]
We claim that equality holds in the above display. The equation
\[
(uy)^2-(uy)(uy+vx)+(ux)(vy)\ =\ 0
\]
shows that $uy$ is integral over $R_2$; similarly one sees that $B$ is integral over $R_2$. Moreover, it is readily checked that at the level of fraction fields one has
\[
\ffield(R_2)(uy)\ = \ffield(B),
\]
so that $[\ffield(B):\ffield(R_2)]\le 2$. Hence
\[
\ffield(R_2)=\ffield\big(B^G\big).
\]
Each element of $B^G$ is integral over $R_2$, and belongs to the fraction field of $R_2$; but $R_2$ is normal, so we conclude that $R_2=B^G$.

Since $2$ is a unit in $R_2$, the Reynolds operator $B\to R_2$ shows that $R_2$ is a direct summand of $B$ as an~$R_2$-module. It follows that $R_2$ is a direct summand of $T_2$ as an $R_2$-module. At this stage, one may invoke \cite[Theorem~6.3]{Jeffries} to conclude that the map $D_R\ \to\ D_{R/pR}$ is surjective for \emph{almost all} integer primes~$p$; we shall, however, go further and prove that the map is surjective for each odd prime integer $p$. Fix such a prime $p$; it suffices to prove that $p$ acts injectively on the local cohomology module
\[
H^6_{\Delta}(R_2\otimes_{\ZZ_2}R_2),
\]
where $\Delta$ is the diagonal ideal in $R_2\otimes_{\ZZ_2}R_2$. Since $R_2$ is a direct summand of $T_2$ as an $R_2$-module, it follows that~$R_2\otimes_{\ZZ_2}R_2$ is a direct summand of $T_2\otimes_{\ZZ_2}T_2$ as an~$R_2\otimes_{\ZZ_2}R_2$-module. It suffices to show that $p$ acts injectively on $H^6_{\Delta}(C)$, where $C\colonequals T_2\otimes_{\ZZ_2}T_2$ is identified with
\[
\ZZ_2[u,v,w,x,y,z,\ u',v',w',x',y',z']
\]
and $\Delta$ is the ideal of $C$ generated by the elements
\begin{multline*}
ux-u'x',\quad vy-v'y',\quad wz-w'z',\\
uy+vx-u'y'-v'x',\quad uz+wx-u'z'-w'x',\quad vz+wy-v'z'-w'y'.
\end{multline*}
Consider the ideals of $C$ as below:
\[
\frakp\colonequals I_2\begin{pmatrix}
u & v & w & x' & y' & z'\\
u' & v' & w' & x & y & z\end{pmatrix}
\quad\textrm{and}\quad
\frakq\colonequals I_2\begin{pmatrix}
u & v & w & u' & v' & w'\\
x' & y' & z' & x & y & z\end{pmatrix}.
\]
Since $\frakp$ and $\frakq$ are generated by minors of matrices of indeterminates, each is prime; moreover, $\frakp$ and $\frakq$ each contain $\Delta$. It is a straightforward---albeit slightly tedious---verification that $\frakp^2\frakq\subseteq\Delta$. It then follows that
\[
\frakp\cap\frakq\ =\ \rad\Delta.
\]
Since $H^k_\frakp(C)=0=H^k_\frakq(C)$ for integers $k$ other than $5$ and $9$, the Mayer-Vietoris sequence
\[
\minCDarrowwidth20pt
\CD
@>>> H^6_\frakp(C)\oplus H^6_\frakq(C) @>>> H^6_\Delta(C) @>>> H^7_{\frakp+\frakq}(C) @>>> H^7_\frakp(C)\oplus H^7_\frakq(C) @>>>
\endCD
\]
gives an isomorphism $H^6_\Delta(C)\cong H^7_{\frakp+\frakq}(C)$. It now suffices to check that $p$ acts injectively on $H^7_{\frakp+\frakq}(C)$. We claim that the ideal $(\frakp+\frakq)C/pC$ has height $7$. To see this, note that there is an isomorphism
\[
C/(\frakp+\frakq+pC)\ \to\ \ZZ/p\ZZ[x_1,x_2]\ \#\ \ZZ/p\ZZ[y_1,y_2]\ \#\ \ZZ/p\ZZ[z_1,z_2,z_3],
\]
with $\#$ denoting the Segre product of graded rings, given by
\begin{alignat*}2
u \mapsto x_1y_1z_1, &\qquad v \mapsto x_1y_1z_2, &\qquad w \mapsto x_1y_1z_3, \\
u' \mapsto x_1y_2z_1, &\qquad v' \mapsto x_1y_2z_2, &\qquad w' \mapsto x_1y_2z_3, \\
x \mapsto x_2y_2z_1, &\qquad y \mapsto x_2y_2z_2, &\qquad z \mapsto x_2y_2z_3, \\
x' \mapsto x_2y_1z_1, &\qquad y' \mapsto x_2y_1z_2, &\qquad z' \mapsto x_2y_1z_3.
\end{alignat*}
It follows that $H^6_{\frakp+\frakq}(C/pC)=0$, which gives the desired injectivity using the exactness of
\[
\CD
@>>> H^6_{\frakp+\frakq}(C/pC) @>>> H^7_{\frakp+\frakq}(C) @>p>> H^7_{\frakp+\frakq}(C) @>>> H^7_{\frakp+\frakq}(C/pC).
\endCD\qedhere
\]
\end{proof}

\begin{remark}
\label{remark:orthogonal}
Over a field $K$ of odd characteristic, the orthogonal group $\O_2(K)$ is linearly reductive: to see this, first enlarge $K$ so that it contains $i\colonequals\sqrt{-1}$. The special orthogonal group $\SO_2(K)$ is then isomorphic to a torus $K^\times$, as seen by conjugating elements of 
\[
\SO_2(K)\ =\ \left\{\begin{pmatrix} a & b \\ -b & a \end{pmatrix}\ \Big|\ a^2+b^2=1,\ a,b\in K\right\}
\]
by the matrix $\begin{pmatrix} 1 & i \\ i & 1 \end{pmatrix}$, so as to obtain
\[
\left\{\begin{pmatrix} a-ib & 0 \\ 0 & a+ib \end{pmatrix}\ \Big|\ a^2+b^2=1,\ a,b\in K\right\}\ =
\left\{\begin{pmatrix} t & 0 \\ 0 & t^{-1} \end{pmatrix}\ \Big|\ t\in K^\times\right\}.
\]
Since $\SO_2(K)$ is a normal subgroup of $\O_2(K)$ with index $2$, the claim follows by \cite[\S3]{Nagata}.

If $Z$ is $2\times n$ matrix of indeterminates over a field $K$ of odd characteristic, the fact that~$\O_2(K)$ is linearly reductive may be used to conclude that $K[Z^{\tr}Z]$ is a direct summand of the polynomial ring $K[Z]$ as a $K[Z^{\tr}Z]$-module. However, given the arithmetic nature of the paper, it is more advantageous to construct an explicit splitting over $\ZZ_2[i]$, as we carry out in the remark below, and more generally in the proof of Theorem~\ref{theorem:direct:summand}~\eqref{minors:c}.
\end{remark}

\begin{remark}
\label{remark:dirsum}
Let $R$ be the hypersurface $\ZZ[X]/(\det X)$, where $X$ is a symmetric $3\times 3$ matrix of indeterminates. Then $R$ may be identified with $\ZZ[Z^{\tr}Z]$, for $Z$ a $2\times 3$ matrix of indeterminates. Set $T\colonequals\ZZ[Z]$. Using arguments from the proof of Theorem~\ref{theorem:symmetric:3:odd}, we show that~$R_2=R[1/2]$ is a direct summand of $T_2=T[1/2]$ as an $R_2$-module.

Take $i=\sqrt{-1}$ in $\CC$. It suffices to prove that $R_2[i]$ is a direct summand of $T_2[i]$ as an~$R_2[i]$-module: indeed, if $\rho\colon T_2[i]\to R_2[i]$ is a splitting of $R_2[i]\into T_2[i]$, then
\[
\frac{1}{2}(\rho+\bar{\rho})\colon T_2\to R_2,
\]
with $\bar{\rho}$ denoting the complex conjugate, is a splitting of $R_2\into T_2$. In the ring $T_2[i]$, set
\begin{alignat*}{3}
 u\ &=\ z_{11}+iz_{21}, \qquad\qquad & x\ &=\ z_{11}-iz_{21},\\
 v\ &=\ z_{12}+iz_{22}, \qquad\qquad & y\ &=\ z_{12}-iz_{22},\\
 w\ &=\ z_{13}+iz_{23}, \qquad\qquad & z\ &=\ z_{13}-iz_{23},
\end{alignat*}
so that $T_2[i]=\ZZ_2[i][u,v,w,x,y,z]$. But $R_2[i]$ is the $\ZZ_2[i]$-algebra generated by the entries of 
\begin{multline*}
Z^{\tr}Z\ =\ 
\begin{pmatrix}
z_{11}^2+z_{21}^2 & z_{11}z_{12}+z_{21}z_{22} & z_{11}z_{13}+z_{21}z_{23}\\
z_{11}z_{12}+z_{21}z_{22} & z_{12}^2+z_{22}^2 & z_{12}z_{13}+z_{22}z_{23}\\
z_{11}z_{13}+z_{21}z_{23} & z_{12}z_{13}+z_{22}z_{23} & z_{13}^2+z_{23}^2
\end{pmatrix}\\
=\ \frac{1}{2}
\begin{pmatrix}
2ux & uy+vx & uz+wx\\ uy+vx & 2vy & vz+wy\\ uz+wx & vz+wy & 2wz
\end{pmatrix}.
\end{multline*}
The proof of Theorem~\ref{theorem:symmetric:3:odd} shows that $R_2[i]$ is a direct summand of $T_2[i]$ as an~$R_2[i]$-module.
\end{remark} 

To study the case of symmetric matrices in characteristic $2$, we record the following variant of Lemma~\ref{lemma:mainlemma}:

\begin{lemma}
\label{lemma:mainlemma:symmetric}
Let $S\colonequals\ZZ[\bsx]$ be a polynomial ring in the indeterminates $\bsx\colonequals x_0,\dots,x_d$. Fix a prime integer $p>0$, and let $f(\bsx)\in S$ be a polynomial of the form
\[
f(\bsx)\ =\ g(\bsx) + ph(\bsx),
\]
where $g(\bsx)\in(x_0,\dots,x_m)S$ for some fixed integer $m<d$, and $h(\bsx)\in S$ is a polynomial in the indeterminates $x_{m+1},\dots,x_d$. Set $R\colonequals S/(f(\bsx))$, and let $\phi_p$ denote the $p$-derivation of~$P_S$ as in Theorem~\ref{theorem:lifting:trace}. Then, if the local cohomology element
\[
\left[\frac{\phi_p\big(g(\bsx)\big)}{(x_0\cdots x_m)^p}\right]\ \in\ H^{m+1}_{(x_0,\,\dots,\,x_m)}(R/pR)
\]
is nonzero, so is the element
\[
\left[\frac{\phi_p\big(f(\bsy)-f(\bsx)\big)}{\prod_{i=0}^d (y_i-x_i)^p}\right]\ \in\ H^{d+1}_{\Delta_{R}}(P_R/pP_R).
\]
\end{lemma}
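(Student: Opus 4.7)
The plan is to mirror the proof of Lemma~\ref{lemma:mainlemma}, with two modifications to accommodate the extra term $ph(\bsx)$ in the definition of $f(\bsx)$: we work modulo $p$ throughout, and we use the $p$-derivation identity $\phi_p(a+pb)\equiv \phi_p(a)+b^p\pmod p$ recalled in \S\ref{subsection:FrobeniusLifting}. Specifically, I would suppose for contradiction that the displayed element of $H^{d+1}_{\Delta_R}(P_R/pP_R)$ vanishes; unwinding, this gives an integer $k\ge 0$ such that in $P_S$
\[
\phi_p\bigl(f(\bsy)-f(\bsx)\bigr)\prod_{i=0}^d(y_i-x_i)^k\ \in\ \bigl(p,\,f(\bsx),\,f(\bsy),\,(y_0-x_0)^{p+k},\dots,(y_d-x_d)^{p+k}\bigr)P_S.
\]
Since $f(\bsy)-f(\bsx)=\bigl(g(\bsy)-g(\bsx)\bigr)+p\bigl(h(\bsy)-h(\bsx)\bigr)$, the above $p$-derivation identity yields the congruence $\phi_p(f(\bsy)-f(\bsx))\equiv\phi_p(g(\bsy)-g(\bsx))+(h(\bsy)-h(\bsx))^p\pmod{p}$; together with $f\equiv g\pmod{p}$, reducing the ideal membership modulo~$p$ gives
\[
\bigl[\phi_p(g(\bsy)-g(\bsx))+(h(\bsy)-h(\bsx))^p\bigr]\prod_i(y_i-x_i)^k\ \in\ \bigl(g(\bsx),\,g(\bsy),\,(y_i-x_i)^{p+k}\bigr)(P_S/pP_S).
\]

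The rest of the argument parallels Lemma~\ref{lemma:mainlemma}. First specialize $x_i\mapsto 0$ for $0\le i\le m$; this is compatible with $\phi_p$ by the discussion in \S\ref{subsection:FrobeniusLifting}, sends $g(\bsx)$ to zero since $g\in(x_0,\dots,x_m)S$, and leaves $h(\bsx)$ unchanged since $h$ is a polynomial in $x_{m+1},\dots,x_d$. As in the original proof, $y_{m+1}-x_{m+1},\dots,y_d-x_d$ form a regular sequence modulo $(p,g(\bsy),y_0^{p+k},\dots,y_m^{p+k})$, so the $(y_j-x_j)^k$ for $j>m$ may be cancelled and the $(p+k)$-th powers replaced by $p$-th powers. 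Next specialize $x_j\mapsto y_j$ for $j>m$, which causes both the $(y_j-x_j)^p$ and, crucially, the term $(h(\bsy)-h(\bsx))^p$ to vanish (the latter because $h$ involves only $x_{m+1},\dots,x_d$). After renaming $y_i\mapsto x_i$, we land on
\[
\phi_p(g(\bsx))(x_0\cdots x_m)^k\ \in\ \bigl(g(\bsx),\,x_0^{p+k},\dots,x_m^{p+k}\bigr)\ZZ[\bsx]\!\!\pmod{p},
\]
and since $R/pR=S/(p,f)S=S/(p,g)S$, this forces $\bigl[\phi_p(g(\bsx))/(x_0\cdots x_m)^p\bigr]=0$ in $H^{m+1}_{(x_0,\dots,x_m)}(R/pR)$, contradicting the hypothesis.

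The main obstacle, and the only place where the proof genuinely departs from that of Lemma~\ref{lemma:mainlemma}, is the mod-$p$ congruence for $\phi_p(f(\bsy)-f(\bsx))$: one has to isolate the ``extra'' term $(h(\bsy)-h(\bsx))^p$ and then arrange the second specialization $x_j\mapsto y_j$, $j>m$, so as to kill it. This also explains why only the mod-$p$ statement is proved here, with no lift to an integer-level analogue: the $p$-derivation identity used to produce $(h(\bsy)-h(\bsx))^p$ is only valid modulo $p$.
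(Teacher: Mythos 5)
Your proof is correct and follows essentially the same route as the paper's: the same two specializations ($x_i\mapsto 0$ for $i\le m$, then $x_j\mapsto y_j$ for $j>m$), the same regular-sequence cancellation, landing on the same conclusion. The only difference is presentational: you reduce mod $p$ at the outset and invoke the congruence $\phi_p(a+pb)\equiv\phi_p(a)+b^p\pmod p$ to isolate the extra term $(h(\bsy)-h(\bsx))^p$, which is then killed by the second specialization; the paper instead carries the exact integral expression $\frac{\Lambda_p(f(\bsy)-ph(\bsx))-(f(\bsy)-ph(\bsx))^p}{p}$ through the specializations and observes at the very end that $f(\bsy)-ph(\bsy)=g(\bsy)$, so the extra term never explicitly appears. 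Both are valid; your version arguably makes the mechanism for disposing of the $h$-term more transparent.
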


\begin{proof}
Suppose the displayed element of~$H^{d+1}_{\Delta_{R}}(P_R/pP_R)$ is zero. Then there exists an integer $k\ge 0$ such that
\begin{multline*}
\frac{\Lambda_p\big(f(\bsy)-f(\bsx)\big)-\big(f(\bsy)-f(\bsx)\big)^p}{p}(y_0-x_0)^k\cdots(y_d-x_d)^k\\
\in\ \big(p,\ f(\bsy),\ f(\bsx),\ (y_0-x_0)^{p+k},\ \dots,\ (y_d-x_d)^{p+k}\big)P_{S}.
\end{multline*}
Specialize $x_i\mapsto 0$ for $0\le i\le m$, in which case $f(\bsx)$ specializes to $ph(\bsx)$, so
\begin{multline*}
\frac{\Lambda_p\big(f(\bsy)-ph(\bsx)\big)-\big(f(\bsy)-ph(\bsx)\big)^p}{p}\ y_0^k\cdots y_m^k\ (y_{m+1}-x_{m+1})^k\cdots(y_d-x_d)^k\\
\in\ \big(p,\ f(\bsy),\ y_0^{p+k},\ \dots,\ y_m^{p+k},\ (y_{m+1}-x_{m+1})^{p+k},\ \dots,\ (y_d-x_d)^{p+k}\big).
\end{multline*}
Since $y_{m+1}-x_{m+1},\ \dots,\ y_d-x_d$ are algebraically independent over 
\[
\ZZ[\bsy]/(p,\ f(\bsy),\ y_0^{p+k},\ \dots,\ y_m^{p+k}),
\]
it follows that
\begin{multline*}
\frac{\Lambda_p\big(f(\bsy)-ph(\bsx)\big)-\big(f(\bsy)-ph(\bsx)\big)^p}{p}\ y_0^k\cdots y_m^k\\
\in\ \big(p,\ f(\bsy),\ y_0^{p+k},\ \dots,\ y_m^{p+k},\ (y_{m+1}-x_{m+1})^p,\ \dots,\ (y_d-x_d)^p\big).
\end{multline*}
Next, specialize $x_i\mapsto y_i$ for $m+1\le i\le d$. Then $ph(\bsx)$ specializes to $ph(\bsy)$, giving
\[
\frac{{\Lambda}_p\big(g(\bsy)\big)-\big(g(\bsy)\big)^p}{p}\ y_0^k\cdots y_m^k\ \in\ \big(p,\ f(\bsy),\ y_0^{p+k},\ \dots,\ y_m^{p+k}\big)\ZZ[\bsy],
\]
where $\Lambda_p$ is the standard lift of Frobenius on $\ZZ[\bsy]$ with respect to $\bsy$. Renaming $y_i\mapsto x_i$ for each $i$, it follows that $\displaystyle \left[\frac{\phi_p\big(g(\bsx)\big)}{(x_0\cdots x_m)^p}\right]\in\ H^{m+1}_{(x_0,\,\dots,\,x_m)}(R/pR)$ is zero.
\end{proof}

\begin{theorem}
\label{theorem:symmetric:char:2}
Let $X$ be a symmetric $3\times 3$ matrix of indeterminates over $\ZZ$, and set $R$ to be the hypersurface $\ZZ[X]/(\det X)$. Then the Frobenius trace on $R/2R$ does not lift to a differential operator on $R/4R$.
\end{theorem}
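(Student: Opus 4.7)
My plan is to combine the mod-$p^2$ version of Theorem~\ref{theorem:lifting:trace} with Lemma~\ref{lemma:mainlemma:symmetric}, reducing the statement to a concrete nonvanishing question for a local cohomology class on a hypersurface, and then to attack that question by a specialization followed by a multigrading argument. Writing
\[
X\ =\ \begin{pmatrix} a & b & c \\ b & d & e \\ c & e & f \end{pmatrix},
\]
the cofactor expansion gives $\det X=(adf-ae^2-b^2f-c^2d)+2bce$. Setting $g\colonequals adf-ae^2-b^2f-c^2d$ and $h\colonequals bce$, one has $g\in(a,d,f)\ZZ[X]$ while $h$ involves only the off-diagonal indeterminates $b,c,e$; so after relabeling $(x_0,x_1,x_2)=(a,d,f)$ and $(x_3,x_4,x_5)=(b,c,e)$, Lemma~\ref{lemma:mainlemma:symmetric} applies with $p=2$ and reduces the theorem to showing that
\[
\left[\frac{\phi_2(g)}{(adf)^2}\right]\ \in\ H^3_{(a,d,f)}(R/2R)
\]
is nonzero.

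To make the class explicit, I would set $A\colonequals adf$, $B\colonequals ae^2$, $C\colonequals b^2f$, $D\colonequals c^2d$, so that $g=A-B-C-D$, and expand $\phi_2(g)=(\Lambda_2(g)-g^2)/2$ as
\[
\phi_2(g)\ =\ AB+AC+AD-B^2-C^2-D^2-BC-BD-CD.
\]
Modulo $(g,2)$, the Frobenius identity $(A+B+C+D)^2\equiv A^2+B^2+C^2+D^2$ combined with $A+B+C+D\equiv 0\pmod{g}$ collapses the $AB+AC+AD$ and $B^2+C^2+D^2$ contributions to zero, leaving the clean expression
\[
\phi_2(g)\ \equiv\ BC+BD+CD\ =\ ab^2e^2f+ac^2de^2+b^2c^2df \pmod{(g,2)}.
\]

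Next I would specialize $e\mapsto 0$. This induces a ring surjection $R/2R \onto \bar R'\colonequals(\ZZ/2\ZZ)[a,b,c,d,f]/(adf+b^2f+c^2d)$, and functoriality of local cohomology carries the class to $[b^2c^2/(a^2df)]\in H^3_{(a,d,f)}(\bar R')$; it is enough to show this image is nonzero. I would then equip $(\ZZ/2\ZZ)[a,b,c,d,f]$ with the $\ZZ^3$-grading
\[
\deg a=(2,0,0),\quad \deg d=(0,2,0),\quad \deg f=(0,0,2),\quad \deg b=(1,1,0),\quad \deg c=(1,0,1),
\]
under which $adf+b^2f+c^2d$ is homogeneous of multidegree $(2,2,2)$, so the multigrading descends to $\bar R'$ and all nonzero multigraded pieces sit in $\NN^3$.

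The final step, which I anticipate to be the main obstacle, is the graded bookkeeping. Suppose the image were zero; then for some $k\ge 0$ one would have
\[
b^2c^2(adf)^k\ =\ \alpha\,a^{k+2}+\beta\,d^{k+2}+\gamma\,f^{k+2}\qquad\text{in }\bar R',
\]
with $\alpha,\beta,\gamma$ multihomogeneous of the forced degrees. The degrees required of $\beta$ and $\gamma$ each contain a negative coordinate, so $\beta=\gamma=0$; and the multidegree $(0,2k+2,2k+2)$ piece of the polynomial cover of $\bar R'$ is one-dimensional, spanned by $d^{k+1}f^{k+1}$, so $\alpha=\epsilon\,d^{k+1}f^{k+1}$ for some $\epsilon\in\ZZ/2\ZZ$. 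Since $adf+b^2f+c^2d$ is irreducible in $(\ZZ/2\ZZ)[a,b,c,d,f]$ (it is linear in $a$ with coefficients coprime in $(\ZZ/2\ZZ)[b,c,d,f]$), $\bar R'$ is a domain; cancelling $a^kd^kf^k$ reduces the equation to $b^2c^2=\epsilon\,a^2df$ in $\bar R'$, and specializing $a\mapsto 0$ further collapses it to the membership $b^2c^2\in(b^2f+c^2d)$ in $(\ZZ/2\ZZ)[b,c,d,f]$, which fails by direct inspection of monomials. This contradiction will complete the proof.
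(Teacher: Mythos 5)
Your setup is on the right track and largely mirrors the paper: the decomposition $\det X = g + 2h$ with $g\in(a,d,f)$ and $h=bce$, the application of Lemma~\ref{lemma:mainlemma:symmetric} reducing to nonvanishing of $\bigl[\phi_2(g)/(adf)^2\bigr]$ in $H^3_{(a,d,f)}(R/2R)$, and the identity $\phi_2(g)\equiv BC+BD+CD\pmod{(g,2)}$ are all correct. The gap is the specialization $e\mapsto 0$: it kills the class. After specializing, the class is $\bigl[b^2c^2df/(adf)^2\bigr]\in H^3_{(a,d,f)}(\bar R')$ with $\bar R'=(\ZZ/2\ZZ)[a,b,c,d,f]/(adf+b^2f+c^2d)$, and multiplying the defining relation by $c^2d$ gives
\[
b^2c^2df \ =\ ac^2d^2f + c^4d^2 \ \in\ (d^2)\bar R' \ \subseteq\ (a^2,d^2,f^2)\bar R',
\]
so the class already vanishes at the $k=0$ stage. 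Your multigraded argument only appears to succeed because of a separate bookkeeping slip: for the class $\bigl[b^2c^2/(a^2df)\bigr]=\bigl[b^2c^2df/(adf)^2\bigr]$, the vanishing criterion is $b^2c^2df\,(adf)^k\in(a^{k+2},d^{k+2},f^{k+2})$, not $b^2c^2(adf)^k\in(\cdots)$; restoring the missing factor $df$ makes the left-hand side have degree $(2k+4,2k+4,2k+4)$, whence $\beta$ and $\gamma$ have degrees $(2k+4,0,2k+4)$ and $(2k+4,2k+4,0)$, both in $\NN^3$, so they are not forced to vanish. The two errors conspire to mask each other.

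The paper's proof never specializes $e$; instead it reduces $\phi_2(g)$ modulo $(a^2,d^2,f^2)R/2R$ (to a single term $BD=ae^2\cdot dc^2$) and equips $R/2R$ with a $\ZZ^2$-grading, not $\ZZ^3$, under which the class lies in degree $(0,0)$. It then passes to the degree-zero subring, which it identifies as the hypersurface $B\colonequals\ZZ/2\ZZ[a,b,c,d,e]/\bigl(ab(d-a-b)-de^2\bigr)$ (in the paper's invariant-combination notation), and shows the resulting membership fails there. To salvage your approach you would need to either avoid the specialization entirely, or find a different specialization that does not annihilate the class before applying a grading argument.
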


\begin{proof}
Set $S\colonequals\ZZ[X]$. Modulo the ideal $(2,x_{11},x_{22},x_{33})$, the image of $X$ is a $3\times 3$ alternating matrix, and hence has determinant zero; it follows that
\[
\det X\ \in\ (2,\,x_{11},\,x_{22},\,x_{33})S.
\]
Indeed, $\det X=g(\bsx) + 2h(\bsx)$, where
\[
g(\bsx)\colonequals x_{11}x_{22}x_{33} - x_{11}x_{23}^2 - x_{22}x_{13}^2 - x_{33}x_{12}^2
\quad\text{ and }\quad h(\bsx)\colonequals x_{12}x_{13}x_{23}.
\]
In light of Lemma~\ref{lemma:mainlemma:symmetric}, it suffices to check that
\[
\left[\frac{\phi_2\big(g(\bsx)\big)}{(x_{11}x_{22}x_{33})^2}\right]\ \in\ H^3_{(x_{11},\,x_{22},\,x_{33})}(R/2R)
\]
is nonzero. We shall go a step further and prove that while $\det X\equiv g(\bsx)\mod 2$,
\[
\left[\frac{\phi_2\big(\det X\big)}{(x_{11}x_{22}x_{33})^2}\right]\ =\ 0\ \neq\ \left[\frac{\phi_2\big(g(\bsx)\big)}{(x_{11}x_{22}x_{33})^2}\right].
\]
It is a straightforward calculation that modulo the ideal $(x_{11}^2,\ x_{22}^2,\ x_{33}^2)R/2R$, one has
\[
\phi_2\big(\det X\big)\ \equiv\ x_{11}x_{23}^2x_{22}x_{13}^2 + x_{12}^2x_{13}^2x_{23}^2
\quad\text{ and }\quad \phi_2\big(g(\bsx)\big)\ \equiv\ x_{11}x_{23}^2x_{22}x_{13}^2.
\]
In the ring $R/2R$, we set
\[
a\colonequals x_{11}x_{23}^2,\quad b\colonequals x_{22}x_{13}^2,\quad c\colonequals x_{33}x_{12}^2,\quad d\colonequals x_{11}x_{22}x_{33},\quad e\colonequals x_{12}x_{13}x_{23}. 
\]
Note that $a+b+c=d$ and $abc=de^2$ in $R/2R$. Working modulo $(x_{11}^3,\ x_{22}^3,\ x_{33}^3)R/2R$,
\[
\phi_2\big(\det X\big)\, x_{11}x_{22}x_{33}\ \equiv\ (ab+e^2)d\ \equiv\ ab(d+c)\ \equiv\ ab(a+b)\ \equiv\ a^3+b^3+(a+b)^3.
\]
Since $a\in (x_{11})R/2R$, and $b\in (x_{22})R/2R$, and $a+b=c+d\in (x_{33})R/2R$, it follows that~$\phi_2\big(\det X\big)x_{11}x_{22}x_{33}\in (x_{11}^3,\ x_{22}^3,\ x_{33}^3)R/2R$. This proves the first assertion.

Next, suppose $\left[\frac{\phi_2\big(g(\bsx)\big)}{(x_{11}x_{22}x_{33})^2}\right]=0$. Then there exists an integer $k\ge0$ such that
\begin{equation}
\label{equation:symmetric:3x3:g}
x_{11}x_{23}^2x_{22}x_{13}^2\, (x_{11}x_{22}x_{33})^k\ \in\ \big(x_{11}^{2+k},\ x_{22}^{2+k},\ x_{33}^{2+k}\big)R/2R.
\end{equation}
Consider the $\ZZ^2$-grading on $R/2R$ defined by
\begin{alignat*}3
\deg x_{11}\ &=\ (2, 0), \qquad\qquad & \deg x_{23}\ &=\ (-1, 0),\\
\deg x_{22}\ &=\ (0, 2), \qquad\qquad & \deg x_{13}\ &=\ (0, -1),\\
\deg x_{33}\ &=\ (-2, -2), \qquad\qquad & \deg x_{12}\ &=\ (1, 1).
\end{alignat*}
The element on the left in~\eqref{equation:symmetric:3x3:g} has degree $(0,0)$, so we work in the subring ${[R/2R]}_{(0,0)}$, which is the $\ZZ/2\ZZ$-algebra generated by
\[
x_{11}x_{23}^2,\quad x_{22}x_{13}^2,\quad x_{33}x_{12}^2,\quad x_{11}x_{22}x_{33},\quad x_{12}x_{13}x_{23}.
\]
Using $c=d-a-b$, this subring may be identified with
\[
B\colonequals \ZZ/2\ZZ[a,b,d,e]/\big(ab(d-a-b)-de^2\big).
\]
In the hypersurface $B$,~\eqref{equation:symmetric:3x3:g} implies that
\[
abd^k\ \in\ (a,d)^{2+k} + (b,d)^{2+k} + (c,d)^{2+k}\ \subseteq\ (a^2,\ b^2,\ d^{k+1}).
\]
But the image of $abd^k$ is nonzero in
\[
B/(a^2, b^2, d^{k+1}, ab-e^2)\ =\ \ZZ/2\ZZ[a,b,d,e]/(a^2,\ b^2,\ d^{k+1},\ ab-e^2),
\]
which is a contradiction.
\end{proof}

Lastly, we examine the existence of Frobenius lifts for hypersurfaces defined by determinants of symmetric matrices of indeterminates:

\begin{theorem}
\label{theorem:symmetric:frobenius}
Let $X$ be an $n\times n$ symmetric matrix of indeterminates over $\ZZ$. Set~$S\colonequals\ZZ[X]$ and~$R\colonequals S/(\det X)$.
\begin{enumerate}[\,\rm(a)]
\item\label{nxn:sym:a} If $n=3$ and $p$ is an odd prime integer, then the Frobenius endomorphism on $R/pR$ lifts to an endomorphism of $R/p^2R$.
\item\label{nxn:sym:b} If $n\ge 3$, then the Frobenius endomorphism on $R/2R$ does not lift to an endomorphism of $R/4R$.
\item\label{nxn:sym:c} For $n\ge4$, and $p$ an odd prime integer, the Frobenius endomorphism on $R/pR$ does not lift to an endomorphism of $R/p^2R$.
\end{enumerate}
\end{theorem}

\begin{proof}
\eqref{nxn:sym:a} In the case $n=3$, the ring $R[1/2]$ is a direct summand, as an $R[1/2]$-module, of a polynomial ring over $\ZZ[1/2]$, see Remark~\ref{remark:dirsum}. But then, for $p$ odd, the ring $R/p^2R$ is a direct summand, as an $R/p^2R$-module, of a polynomial ring over~$\ZZ/p^2\ZZ$. The existence of a Frobenius lift now follows using \cite[Lemma~4.1]{Zdanowicz}.

In the remaining cases, in view of Proposition~\ref{proposition:zdanowicz}, we need to verify that
\begin{equation}
\label{equation:frobenius:symmetric1}
\phi_p(\det X)\ \notin\ \bigg(p,\ \det X,\ \Big(\frac{\partial\det X}{\partial x_{ij}}\Big)^p : 1\le i\le j\le n\bigg)S.
\end{equation}
A partial derivative of the form $\partial\det X/\partial x_{ii}$ is the determinant of a size $n-1$ principal submatrix of $X$, whereas, for $i<j$, the partial derivative $\partial\det X/\partial x_{ij}$ is, aside from a sign, twice the determinant of a size $n-1$ submatrix. This necessitates the distinction between the cases where $p$ equals $2$, and where $p$ is an odd prime. 

For~\eqref{nxn:sym:b}, suppose~\eqref{equation:frobenius:symmetric1} fails for some $n\ge 3$. In view of the above paragraph, one has
\begin{equation}
\label{equation:frobenius:symmetric2}
\phi_2(\det X)\ \in\ \bigg(2,\ \det X,\ \Big(\frac{\partial\det X}{\partial x_{ii}}\Big)^p : 1\le i\le n\bigg)S.
\end{equation}
Specialize the symmetric matrix $X$ as
\[
X\mapsto \begin{pmatrix}X' & 0\\ 0 & I\end{pmatrix},
\qquad\text{ where }\ X'\colonequals
\begin{pmatrix}
0 & x_{12} & x_{13}\\
x_{12} & 0 & x_{23}\\
x_{13} & x_{23} & 0
\end{pmatrix},
\]
and $I$ is the size $n-3$ identity matrix. Then $\det X$ specializes to $\det X'=2x_{12}x_{13}x_{23}$, so~$\phi_2(\det X)$ specializes to
\[
\phi_2(\det X')\ =\ \frac{1}{2} \left(2x_{12}^2x_{13}^2x_{23}^2 - (2x_{12}x_{13}x_{23})^2\right)\ =\ -x_{12}^2x_{13}^2x_{23}^2.
\]
With $S'$ denoting the image of $S$, the ideal membership~\eqref{equation:frobenius:symmetric2} implies that
\[
x_{12}^2x_{13}^2x_{23}^2\ \in\ \left(x_{12}^2,\ x_{13}^2,\ x_{23}^2\right)^{[2]}S'/2S',
\]
which is a contradiction.

For~\eqref{nxn:sym:c}, suppose~\eqref{equation:frobenius:symmetric1} fails for some $n\ge 4$, and $p$ odd. Then
\begin{equation}
\label{equation:frobenius:symmetric3}
\phi_p(\det X)\ \in\ I_{n-1}(X)^{[p]}\ S/(p,\,\det X)S,
\end{equation}
where $I_{n-1}(X)$ denotes the ideal generated by the size $n-1$ minors of $X$. Specialize
\[
X\mapsto \begin{pmatrix}X' & 0\\ 0 & I\end{pmatrix},
\qquad\text{ where }\ X'\colonequals
\begin{pmatrix}
0 & x_{12} & x_{13} & x_{14}\\
x_{12} & 0 & x_{23} & x_{24}\\
x_{13} & x_{23} & 0 & x_{34}\\
x_{14} & x_{24} & x_{34} & 0
\end{pmatrix},
\]
and $I$ is the size $n-4$ identity matrix. Then $I_{n-1}(X)S'=I_3(X')$, with~$S'$ denoting the image of $S$, and~\eqref{equation:frobenius:symmetric3} specializes to
\begin{equation}
\label{equation:frobenius:symmetric4}
\phi_p(\det X')\ \in\ I_3(X')^{[p]}\ S'/(p,\,\det X')S'.
\end{equation}
Consider the $\ZZ^4$-grading on $S'/(p,\,\det X')S'$ defined by
\begin{alignat*}3
\deg x_{12}\ &=\ e_1+e_4, \qquad\qquad & \deg x_{34}\ &=\ e_4-e_1,\\
\deg x_{13}\ &=\ e_2+e_4, \qquad\qquad & \deg x_{24}\ &=\ e_4-e_2,\\
\deg x_{23}\ &=\ e_3+e_4, \qquad\qquad & \deg x_{14}\ &=\ e_4-e_3,
\end{alignat*}
under which $\det X'$ has degree $(0,0,0,4)$, and $\phi_p(\det X')$ has degree $(0,0,0,4p)$. Let $\Delta_{ij}$ denote the determinant of the submatrix of $X'$ obtained by deleting the $i$-th row and $j$-th column. Then
\[
\deg\Delta_{ij}\ =\ 4e_4-\deg x_{ij}\qquad\text{for }i<j,
\]
whereas
\begin{alignat*}3
\deg\Delta_{11}\ &=\ (-1,-1,1,3),\qquad\qquad & \deg\Delta_{33}\ &=\ (1,-1,-1,3)\\
\deg\Delta_{22}\ &=\ (-1,1,-1,3),\qquad\qquad & \deg\Delta_{44}\ &=\ (1,1,1,3).
\end{alignat*}
Hence a homogeneous equation for~\eqref{equation:frobenius:symmetric4} forces
\[
\phi_p(\det X')\ \in\ \big(x_{ij}^p\Delta_{ij}^p : 1\le i< j\le 4\big) S'/(p,\,\det X')S'.
\]
It is readily checked that
\[
x_{12}\Delta_{12}=x_{34}\Delta_{34},\qquad x_{13}\Delta_{13}=x_{24}\Delta_{24},\qquad x_{14}\Delta_{14}=x_{23}\Delta_{23},
\]
so 
\begin{equation}
\label{equation:frobenius:symmetric5}
\phi_p(\det X')\ \in\ \big(x_{12}^p\Delta_{12}^p,\ x_{13}^p\Delta_{13}^p) S'/(p,\,\det X')S'.
\end{equation}
Since the elements in question have degree of the form $(0,0,0,*)$, this ideal membership holds in the subring ${[S'/(p,\,\det X')S']}_{(0,0,0,*)}$, which may be identified with
\[ 
B\colonequals\ZZ/p\ZZ[a,b,c]/(a^2+b^2+c^2-2ab-2ac-2bc),
\]
where $a\colonequals x_{12}x_{34}$, $b\colonequals x_{13}x_{24}$, and $c\colonequals x_{14}x_{23}$.
In this ring,~\eqref{equation:frobenius:symmetric5} implies that
\[
\frac{1}{p}\big(a^{2p}+b^{2p}+c^{2p}-2a^pb^p-2a^pc^p-2b^pc^p\big)\ \in\ \big(a^2-ab-ac,\ b^2-ab-bc\big)^{[p]}B.
\]
Enlarge the ring $B$ by adjoining $u\colonequals\sqrt{a}$ and $v\colonequals\sqrt{b}$, in which case the defining equation of $B$ factors as
\[
\big(c-(u+v)^2\big)\big(c-(u-v)^2\big)
\]
We work modulo the first factor $c-(u+v)^2$, i.e., in the polynomial ring
\[
\ZZ/p\ZZ[u,v],
\]
where $c$ is identified with $(u+v)^2$. The ideal membership then implies that
\[
\frac{1}{p}\Big(u^{4p}+v^{4p}+(u+v)^{4p}-2u^{2p}v^{2p}-2u^{2p}(u+v)^{2p}-2v^{2p}(u+v)^{2p}\Big)
\]
is a linear combination of $(u^2v^2+u^3v)^p$ and $(u^2v^2+uv^3)^p$ with coefficients from $\ZZ/p\ZZ$. This is not possible, for example by examining the coefficient of $u^{3p-1}v^{p+1}$; the interested reader---if one remains---may verify that this coefficient is $8$. 
\end{proof}

\section{Pfaffian, determinantal, and symmetric determinantal rings}
\label{section:minors}

We extend some earlier results to rings defined by minors and Pfaffians of arbitrary size; the following theorem subsumes Theorem~\ref{theorem:frob} and Corollary~\ref{corollary:not:summand}.

\begin{theorem}
\label{theorem:direct:summand}
Let $V$ denote either the ring of integers, or the ring of $p$-adic integers $\widehat{\ZZ_{(p)}}$.

\begin{enumerate}[\,\rm(a)]
\item\label{minors:a} Let $R\colonequals V[X]/\Pf_t(X)$, where $X$ is an $n\times n$ alternating matrix of indeterminates, and~$\Pf_t(X)$ the ideal generated by the Pfaffians of the size~$t$ principal submatrices of $X$, where $t$ is even, and~$n\ge t\ge 2$. Then $R$ is a direct summand of a polynomial ring over~$V$, as an $R$-module, if and only $t=2$.

\item\label{minors:b} Let $R\colonequals V[X]/I_t(X)$, where $X$ is an $m\times n$ matrix of indeterminates, and $I_t(X)$ the ideal generated by the size $t$ minors of $X$, where $\min\{m,n\}\ge t\ge 2$. Then $R$ is a direct summand of a polynomial ring over $V$, as an $R$-module, if and only $t=2$.

\item\label{minors:c} Let $R\colonequals V[X]/I_t(X)$, where $X$ is a symmetric $n\times n$ matrix of indeterminates, and~$I_t(X)$ the ideal generated by the size $t$ minors of $X$, where $n\ge t\ge 2$. Then $R$ is a direct summand of a polynomial ring over $V$, as an $R$-module, if and only $t=2$ or if $t=3$ and the prime $p$ is odd in the case $V=\widehat{\ZZ_{(p)}}$.
\end{enumerate}

In each of the case above where $R$ is not a direct summand of a polynomial ring, the Frobenius endomorphism on~$R/pR$ does not lift to an endomorphism of $R/p^2R$.
\end{theorem}

\begin{proof}
In each case, when proving that $R$ is \emph{not} a direct summand of any polynomial ring over $V$, or that the Frobenius endomorphism on~$R/pR$ does not lift to an endomorphism of~$R/p^2R$, it suffices to consider the case where $V=\widehat{\ZZ_{(p)}}$. Next, reduce to the case where $X$ is a $t\times t$ matrix, since such a matrix may be enlarged by adding additional rows or columns of indeterminates, with a retraction from the larger determinantal ring to the hypersurface obtained by killing the additional indeterminates. Thus, we may assume in each case that~$R$ is a hypersurface over $V$. The three cases then follow from Theorem~\ref{theorem:pfaffian:frobenius}, Theorem~\ref{theorem:determinant:frobenius}, and Theorem~\ref{theorem:symmetric:frobenius} respectively.

It remains to settle the cases where $R$ is a direct summand of a polynomial ring over $V$. When~$t=2$ in~\eqref{minors:a}, one simply has $R=V$. In case~\eqref{minors:b}, when $t=2$, consider the polynomial ring $T\colonequals V[y_1,\dots,y_m,z_1,\dots,z_n]$. It is readily seen that $x_{ij}\mapsto y_iz_j$ induces an inclusion of~$V$-algebras. The ring $T$ admits a $\ZZ$-grading where $V$ has degree $0$, each $y_i$ has degree $1$, and each $z_j$ has degree $-1$. The projection to the degree $0$ component
\[
T_0\ =\ V[y_iz_j : 1\le i\le m,\ 1\le j\le n]\ \cong\ R
\] gives the $R$-linear splitting.

In case~\eqref{minors:c}, when $t=2$, the ring $R$ may be identified with the Veronese subring
\[
V[y_iy_j:1\le i\le j\le n]
\]
which is a direct summand of the polynomial ring $V[y_1,\dots,y_n]$. Lastly, when $t=3$, the ring~$\ZZ_2[X]/I_3(X)$ may be identified with $\ZZ_2[Z^{\tr}Z]$, where $Z$ is a $2\times n$ matrix of indeterminates; we show that
\[
\ZZ_2[Z^{\tr}Z]\ \to\ \ZZ_2[Z]
\]
is $\ZZ_2[Z^{\tr}Z]$-split. As in Remark~\ref{remark:dirsum}, one may adjoin $i=\sqrt{-1}$ and perform a change of variables, which reduces the problem to proving that the $\ZZ_2$-algebra generated by the entries of the product matrix
\[
\begin{pmatrix}
z_1 & y_1\\
z_2 & y_2\\
\vdots & \vdots\\
z_n & y_n\\
\end{pmatrix}
\begin{pmatrix}
y_1 & y_2 & \hdots & y_n\\
z_1 & z_2 & \hdots & z_n
\end{pmatrix}
\ =\
\begin{pmatrix}
2y_1z_1 & y_1z_2+y_2z_1 & \hdots & y_1z_n+y_nz_1\\
y_1z_2+y_2z_1 & 2y_2z_2 & \hdots & y_2z_n+y_nz_2\\
\vdots & \vdots & \ddots & \vdots\\
y_1z_n+y_nz_1 & y_2z_n+y_nz_2 & \hdots & 2y_nz_n\\
\end{pmatrix}
\]
is a direct summand of the polynomial ring $T_2\colonequals\ZZ_2[y_1,\dots,y_n,z_1,\dots,z_n]$. For this, as in the proof of Theorem~\ref{theorem:symmetric:3:odd}, note that the ring
\[
B\colonequals \ZZ_2[y_iz_j : i\le i,j\le n]
\]
is a direct summand $T_2$ as a $B$-module, and that the involution
\[
y_i\mapsto z_i
\]
on $T_2$ restricts to an action on the subring $B$, with the invariant ring being the $\ZZ_2$-algebra generated by the entries of the product matrix.
\end{proof}

\section*{Acknowledgments}

The first author thanks Alessandro De Stefani and Elo\'isa Grifo for many helpful discussions on $p$-derivations.


\end{document}